\newcommand{\calh}{\mathcal{H}}
\newcommand{\calj}{\mathcal{J}}
\newcommand{\calu}{\mathcal{U}}
\newcommand{\co}{\colon\thinspace}
\newcounter{commentcounter}
\begin{document}

\newtheorem{thm}{Theorem}[section]
\newtheorem{conj}[thm]{Conjecture}
\newtheorem{lem}[thm]{Lemma}
\newtheorem{cor}[thm]{Corollary}
\newtheorem{prop}[thm]{Proposition}
\newtheorem{rem}[thm]{Remark}

\numberwithin{thm}{subsection}

\theoremstyle{definition}
\newtheorem{defn}[thm]{Definition}
\newtheorem{examp}[thm]{Example}
\newtheorem{construction}[thm]{Construction}
\newtheorem{notation}[thm]{Notation}
\newtheorem{rmk}[thm]{Remark}
\newtheorem{convention}[thm]{Convention}

\theoremstyle{remark}

%%%%%%%%%%%%%%%%%%%%%%%%%%%%%%%%%%%%%%%%%%%%%%%%%%%%%%%%%%%%%%%%%%%
\makeatletter
\renewcommand{\maketag@@@}[1]{\hbox{\m@th\normalsize\normalfont#1}}%
\makeatother
%This allows me to smallify diagrams without shrinking the equation number.
%%%%%%%%%%%%%%%%%%%%%%%%%%%%%%%%%%%%%%%%%%%%%%%%%%%%%%%%%%%%%%%%%%%
%\renewcommand{\labelenumi}{(\roman{enumi})}
%\renewcommand{\theenumi}{\roman{enumi}}

\renewcommand{\labelenumi}{(\roman{enumi})}
\renewcommand{\labelenumii}{(\alph{enumii})}

\renewcommand{\theenumi}{(\roman{enumi})}
\renewcommand{\theenumii}{(\alph{enumii})}

\def\square{\hfill ${\vcenter{\vbox{\hrule height.4pt \hbox{\vrule width.4pt
height7pt \kern7pt \vrule width.4pt} \hrule height.4pt}}}$}

\newenvironment{pf}{{\it Proof:}\quad}{\square \vskip 12pt}
%\date{\today}

\title[Voicing Transformations and Uniform Triadic Transformations]{Voicing Transformations and a Linear Representation of Uniform Triadic Transformations}

\author[Fiore and Noll]{Thomas M. Fiore and Thomas Noll}
\address{Thomas M. Fiore, Department of Mathematics and Statistics,
University of Michigan-Dearborn, 4901 Evergreen Road, Dearborn,
MI 48128 \newline  and \newline
NWF I - Mathematik,
Universit\"at Regensburg,
Universit\"atsstra{\ss}e 31,
93040 Regensburg,
Germany }
\email{tmfiore@umich.edu}
\urladdr{http://www-personal.umd.umich.edu/~tmfiore/}

\address{Thomas Noll \\ Escola Superior de M\'{u}sica de Catalunya \\
Departament de Teoria, Composici\'{o} i Direcci\'{o} \\
C. Padilla, 155 - Edifici L'Auditori \\
08013 Barcelona, Spain }
\email{thomas.mamuth@gmail.com}
\urladdr{http://user.cs.tu-berlin.de/~noll/}
\maketitle
\bigskip

\begin{abstract}
Motivated by analytical methods in mathematical music theory, we determine the structure of the subgroup $\calj$ of $GL(3,\mathbb{Z}_{12})$ generated by the three voicing reflections. As applications of our Structure Theorem, we determine the structure of the stabilizer $\calh$ in $\Sigma_3 \ltimes \calj$ of root position triads, and show that $\calh$ is a representation of Hook's uniform triadic transformations group $\calu$. We also determine the centralizer of $\calj$ in both $GL(3,\mathbb{Z}_{12})$ and the monoid $\text{Aff}(3,\mathbb{Z}_{12})$ of affine transformations, and recover a Lewinian duality for trichords containing a generator of $\mathbb{Z}_{12}$. We present a variety of musical examples, including the Wagner's hexatonic Grail motive and the diatonic falling fifths as cyclic orbits, an elaboration of our earlier work with Satyendra on Schoenberg, String Quartet in $D$ minor, op.~7, and an affine musical map of Joseph Schillinger. Finally, we observe, perhaps unexpectedly, that the retrograde inversion enchaining operation RICH (for arbitrary 3-tuples) belongs to the representation $\calh$. This allows a more economical description of a passage in Webern, Concerto for Nine Instruments, op. 24 in terms of a morphism of group actions.
\end{abstract}

\bigskip

\noindent {\bf 2010 Mathematics Subject Classification:} Primary 00A65, 20B35

% 00A65 Mathematics and Music
% 20B35 Subgroups of Symmetric Groups

\smallskip

\noindent {\bf Keywords:} uniform triadic transformations, transformational analysis, neo-Riemannian operations, contextual inversion, permutations

\tableofcontents

\section{Introduction}

\subsection{Motivation for Transformational Approaches in Music Theory} \label{subsec:Motivation_for_Transformation} \leavevmode \smallskip

A driving motivation for the investigation of group actions on musical spaces is their application to the analysis of temporally ordered sequences of musical objects.\footnote{Another motivation is classification in terms of orbits and stabilisers.} Chord sequences, and in particular sequences of major and minor triads, constitute a central instance of this work. The musical objects under consideration are thereby conceived of as elements or ``points'' of an underlying space $S$ and the sequences become discrete trajectories $(s_0, s_1, \dots, s_n)$ within this space.

Major and minor triads in a chromatic 12-tone system can be encoded in various ways as the ``points'' of a 24-element set $S$. We briefly sketch encodings of consonant triads, since the rest of paper relies on this. As is usual in this area, we identify pitch classes with integers modulo 12 via the bijection $C \leftrightarrow 0$, $C\sharp \leftrightarrow 1$, \dots, and finally $B \leftrightarrow 11$. Consonant triads come in two types: major and minor. A {\it major triad} $\{r, r+4, r+7\}\subseteq \mathbb{Z}_{12}$ is said to have {\it root} $r$, {\it third} $r+4$, and {\it fifth} $r+7$.  The {\it letter name} of this major triad is the letter corresponding to the root $r$ under the aforementioned bijection. Similarly, a {\it minor triad} $\{r, r+3, r+7\}\subseteq \mathbb{Z}_{12}$ has {\it root} $r$, {\it third} $r+3$, and {\it fifth} $r+7$. The {\it letter name} of this minor triad is the letter corresponding to the root $r$. Major triads are indicated by capital letters, minor triads are indicated by lowercase letters. A {\it voicing} of a triad corresponds to a selected ordering encoded as a 3-tuple $(x,y,z)\in \mathbb{Z}_{12}^{\times 3}$. To summarize, one can encode a triad in three possible ways: as an unordered subset $\{x, y, z\}\subseteq\mathbb{Z}_{12}$, or as an ordered 3-tuple $(x,y,z) \in \mathbb{Z}_{12}^{\times 3}$ via a pre-selected unique voicing, or as a pair (root name, mode).

Returning to our motivation begun in the first paragraph, the ``transformational analyst'' judiciously selects group actions $G \times S \to S$ and seeks to interpret trajectories $(s_0, s_1, \dots, s_n)$ within $S$ via associated sequences $(g_1, \dots , g_n)$ of transformations $g_i \in G$ satisfying $g_i(s_{i-1}) = s_i$, as pictured in the network below.
$$
%\entrymodifiers={=<1.7pc>[o][F-]}
\xymatrix@C=3pc@R=3.5pc{*+=<1.7pc>[o][F-]{s_0} \ar[r]^{g_1} &  *+=<1.7pc>[o][F-]{s_1} \ar[r]^{g_2} &  \cdots \ar[r]^{g_n} & *+=<1.7pc>[o][F-]{s_n} }
$$

These networks are then themselves transformed and combined into larger networks that elucidate paradigmatic musical motions for the piece under investigation. The analyst thereby {\it presupposes that the transformations in question are defined on the entire space $S$}.

This global domain presupposition is crucial for the interpretation and evaluation of transformational analyses and needs to be understood in its radicality.  In \cite{LewinGMIT}, David Lewin (1993--2003) distinguished and compared two simply transitive actions on the consonant triads: one including global reflections (inversions), the other including contextual reflections (inversions).\footnote{For the moment it may suffice to acknowledge the fact that Lewin just distinguished the two actions. His mathematical insight, that these actions can even be understood as mutually dual actions of a dihedral group on the 24 consonant triads, shall be recalled and appreciated in Section~\ref{sec:Recollection_of_PLR}.} We illustrate the difference on consonant triads as unordered subsets of $\mathbb{Z}_{12}$ for the moment. The interpretation of the $e\flat$-minor triad $\{3,6,10\}$ as a mirror image of the $E\flat$-major triad $\{3,7,10\}$ (across the axis in the middle of 3 and 10) has two natural extensions to the set of all consonant triads. Either one can apply the global reflection operation $I_{3+10}(x)=-x+3+10$ throughout to all the consonant triads, or one may apply the contextual reflection operation {\it parallel} $P$ to all the consonant triads. In the definition of $P$, the {\it contextual} local mirror axis is selected so as to exchange the root and the fifth of the input chord, so that the input triad and the output triad overlap in the root and the fifth. The transformation $P$ has therefore been characterized as a {\it contextual inversion}. The precondition to ``know'' the effect of a transformation on the entire space can be satisfied in both cases through homogeneity assumptions about the underlying pitch class space. This allows, on the one hand, for the definition of pitch class inversions and transpositions. On the other hand, in the case of the contextual transformations, one could speak of an isotropy principle, i.e. a uniformity assumption about the collection of the conextual mirror axes. Two other such contextual inversions defined via common tone retention called {\it leading tone exchange} $L$ and {\it relative} $R$ are recalled in Section~\ref{sec:Recollection_of_PLR}.

For an example of transformational interpretations that illustrate the difference between global reflections, consider a {\it hexatonic cycle} of Cohn \cite{cohn1996}.
\begin{equation} \label{equ:hexatonic_cycle}
E\flat, e\flat, C\flat, b, G, g
\end{equation}
This progression is in measures 586--618 of Schubert, Piano Trio No. 2 in $E\flat$ Major, op. 100, 1st Movement, see the reduction by Cohn \cite[page 215]{cohn1999}.
There are (at least) two possible group-theoretic interpretations of the hexatonic cycle \eqref{equ:hexatonic_cycle}, one involving the alternating application of neo-Riemannian $P$ and $L$ operations, the other involving the componentwise global reflection operations $I_1$, $I_9$, and $I_5$, where $I_n(x):=-x+n$. The transformations $P$ and $L$ are described in more detail in Section~\ref{sec:Recollection_of_PLR}.
\begin{equation} \label{equ:PL_network_of_hexatonic_cycle}
\entrymodifiers={=<1.7pc>[o][F-]}
\xymatrix@C=3.5pc@R=3.5pc{E\flat \ar[r]^{P}  &  e\flat \ar[r]^L & C\flat \ar[r]^P & b \ar[r]^L & G \ar[r]^P & g \ar@/^2pc/[lllll]^L }
\end{equation}
\begin{equation} \label{equ:TI_network_of_hexatonic_cycle}
\entrymodifiers={=<1.7pc>[o][F-]}
\xymatrix@C=3.5pc@R=3.5pc{E\flat \ar[r]^{I_1}  &  e\flat \ar[r]^{I_9} & C\flat \ar[r]^{I_5} & b \ar[r]^{I_1} & G \ar[r]^{I_9} & g \ar@/^2pc/[lllll]^{I_5} }
\end{equation}
Notice that each occurrence of $P$ has a different reflection axis, as does each occurrence of $L$. What unifies these $P$-occurrences is the alignment of the contextual reflection axes perpendicularly to the fifth interval of the respective triads. Similarly, the $L$-occurrences have in common the perpendicular alignment of the reflection axes and the minor thirds, see Figure \ref{fig:Hexatonic_TraidsAndAxes} where the fifths are orange, the thirds are gray, and the reflection axes are dotted.

\begin{figure}[h]
  \centering
  \includegraphics[width=6in]{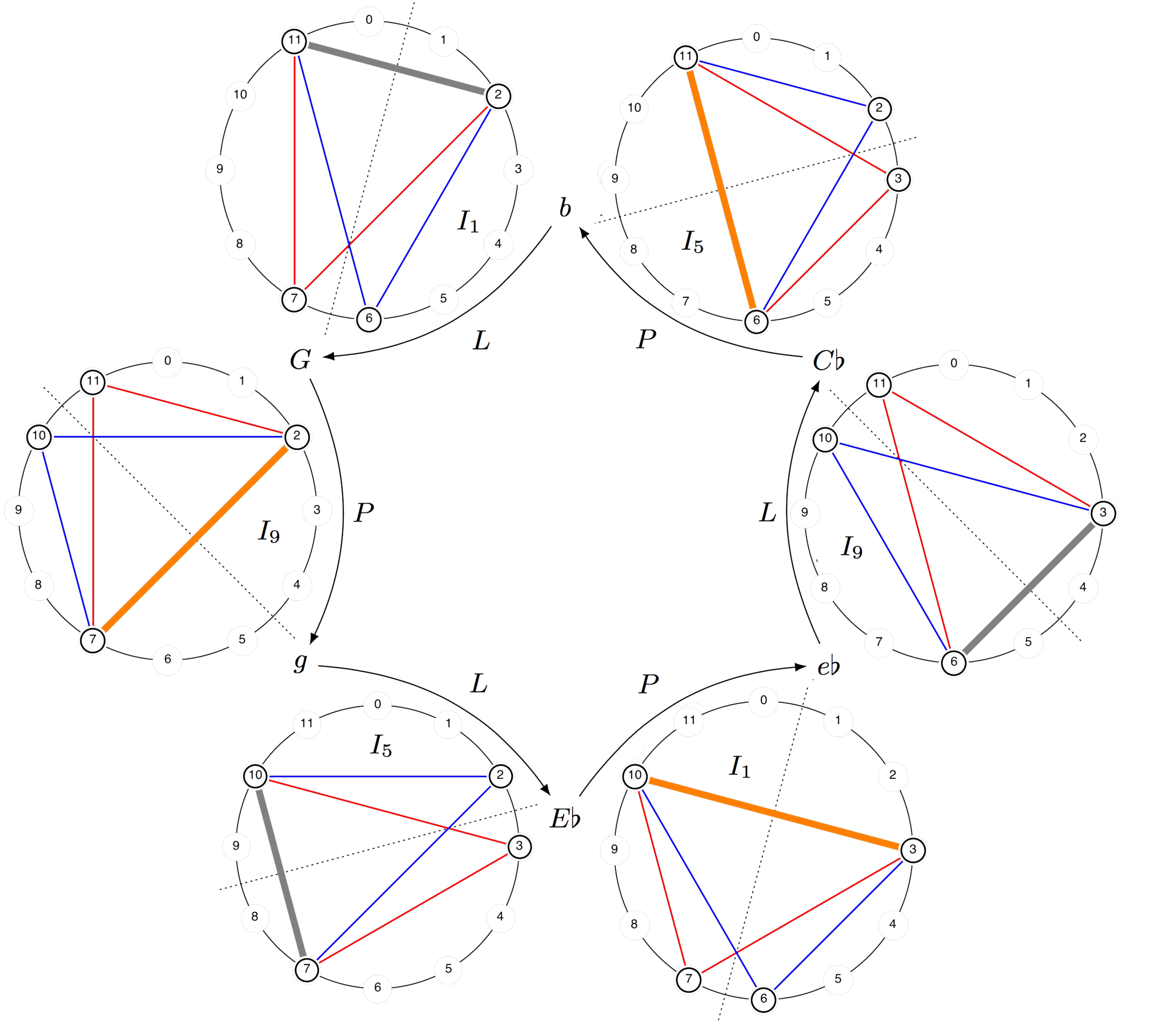}\\
  \caption{Two interpretations of the hextonic cycle $E\flat \mapsto e\flat \mapsto C\flat \mapsto b \mapsto G \mapsto g \mapsto E\flat$  (1) as a network of global reflections $I_1$, $I_5$ and $I_{11}$ and (2) as a network of contextual operations $P$ and $L$. The triangles represent consonant triads. The six clock face diagrams depict the occurrences of the global reflections $I_1$, $I_5$ and $I_{11}$, each of which occurs twice in the cycle. The colored fifths (orange) and minor thirds (gray) are always perpendicularly aligned to the respective reflection axis. These two types of contextual relations between triadic intervals and reflection axes characterize the transformations $P$ and $L$, respectively.}\label{fig:Hexatonic_TraidsAndAxes}
\end{figure}

The hexatonic cycle \eqref{equ:hexatonic_cycle}, with $C\flat$ enharmonically identified with $B$, has been reconsidered by Clampitt after Cohn in a hexatonic analysis \cite{clampittParsifal} of a particular variation of the Grail motive in {\it Parsifal}, Act 3, measures 1098--1100. The exact chord progression in \eqref{equ:hexatonic_cycle} and \eqref{equ:PL_network_of_hexatonic_cycle} is {\it not} in the Grail motive, rather the network is $PLP$ followed by $L$, followed by $PLP$. We realize this network via a single transformation in Section \ref{subsec:Grail_Motive}. Some mathematical background for \cite{clampittParsifal} was recently worked out in \cite{BerryFiore}.

What is the music-analytical intention behind the transformational approach? Critics sometimes see a kind of useless bookkeeping in the activity of labelling chord progressions through transformations. In fact, if the chosen group action is simply transitive, the analytical activity is suspiciously easygoing: there are as many musical objects as there are transformations available, and for any ordered pair $(s, s^\prime)$ of objects there is a unique transformation $g$ sending $s$ to $s^\prime = g(s)$. The transformational interpretation of the trajectories is thus completely determined as soon as a simply transitive group action is selected. To persuade the critics and oneself about the benefit of a transformational analysis, the music-theoretically crucial condition that the effect of a transformation must be ``known'' on the entire space must not be carelessly neglected in the mere bookkeeping of labels. Furthermore there should be an {\it economy of description} involved. The more chord successions exemplify the same transformation, the better.

For instance, let us reconsider the hexatonic cycle \eqref{equ:hexatonic_cycle} in light of ``economy of description'' to motivate Julian Hook's notion of uniform triadic transformation in \cite{hookUTT2002}. Notice that description~\eqref{equ:PL_network_of_hexatonic_cycle} is more economical than description~\eqref{equ:TI_network_of_hexatonic_cycle} because \eqref{equ:PL_network_of_hexatonic_cycle} only utilizes the {\it two} transformations $P$ and $L$, while \eqref{equ:TI_network_of_hexatonic_cycle} utilizes the {\it three} transformations $I_1$, $I_9$, and $I_5$. Alternating orbits under groups with two generators, such as the alternating $PL$-orbit in \eqref{equ:PL_network_of_hexatonic_cycle}, have been coined {\it flip-flop cycles} by John Clough in \cite{cloughFlipFlop}.
Is it possible to make an even more economical description of the hexatonic cycle with only a {\it single} transformation? More precisely, is it possible to define a single transformation which on major triads acts as $P$ and on minor triads acts as $L$ as in \eqref{equ:PL_network_of_hexatonic_cycle}? Hook answers affirmatively with the uniform triadic transformation $\langle -,0,8  \rangle$. The minus sign indicates the transformation reverses mode, the 0 indicates that a major input is not shifted before reversing mode, and the 8 indicates that a minor input is shifted by 8 before reversing mode. The flip-flop cycle \eqref{equ:PL_network_of_hexatonic_cycle} is thereby turned into an orbit of a cyclic transformation group.\footnote{John Clough remarks ``There is a tension between these two readings of a uniform flip-flop circle, one as a chain of paired involutions and the other as a chain of repeated one-way transformations. Which approach is preferable? The answer, I think, depends on one's objectives, and one's perceptions in a particular musical context.'' \cite[page 36]{cloughFlipFlop}}  The group $\calu$ of {\it uniform triadic transformations} with its 288 elements is much larger than the 24 element $PLR$-group, so there is a tradeoff between the economy of description and the size of the presupposed transformation group. We will recall uniform triadic transformations in Section~\ref{sec:A_Linear_Rep_of_the_UTTs} and argue that Hook's group $\calu$ occupies a quite natural position within the theoretical perspective of the present approach via a linear representation constructed from $\calj$ and the permutation $(1\;3)$.

David Lewin \cite{LewinGMIT} often positioned his analytical discourse in another 288-element group, namely in the group generated by the union of the atonal $T/I$-group and the contextual $PLR$-group.\footnote{The $T/I$-group and $PLR$-group commute with one another, are both of order 24, and have only two elements in common: the identity and $Q_6=T_6$. Consequently, the $T/I$-group and the $PLR$-group together generate a group of order $(24\times 24)/2 =288$.} Using these two competing kinds of transformations in tandem, he typically provides instructive arguments in the spirit of an economy of description principle within this chosen context.

So far, for consonant triads encoded as {\it ordered} 3-tuples, we have discussed two kinds of reflection (inversion): the global reflection operations $I_n$, and the contextual reflection operations $P$, $L$, and $R$. The present paper studies a third kind of reflection, called {\it voicing reflection}. In a voicing reflection, the local axis of reflection is determined by the tones in two pre-selected voices, for instance consider for the moment the voice reflection $W(x,y,z):=I_{x+z}(x,y,z)=(z,-y+x+z,x)$ determined by the bass and soprano voices. Let us consider the similarities and differences between the global reflection $I_1$, the contextual reflection $P$, and the voice reflection $W$. Recall that to compute the {\it parallel} contextual reflection $P$ on a consonant triad $(x,y,z)$, we look inside the chord to find the two tones $p$ and $q$ that span a perfect fifth, and then compute\footnote{Recall that $I_{p+q}\co\mathbb{Z}_{12} \to \mathbb{Z}_{12}$ is the unique global reflection that exchanges $p$ and $q$.} $P(x,y,z):=I_{p+q}(x,y,z)$.

On root position $E\flat$-major $(3,7,10)$, all three $I_1$, $P$, and $W$ coincide to give $e\flat$-minor with voicing $(10,6,3)$. On $E\flat$-major in any position, we see that $I_1$ and $P$ will coincide, but that they will differ from $W$ as soon as the first and last positions do not contain the pitch classes spanning the perfect fifth. For instance
$$P(3,10,7)=I_1(3,10,7)=(10,3,6)=\text{ $e\flat$-minor}$$
$$W(3,10,7)=I_{3+7}(3,10,7)=(7,0,3)=\text{ $c$-minor.}$$
Notice that the voicing does not effect the underlying unordered set of the $P$ output, but the voicing greatly effects the underlying output set of $W$. On inputs where the perfect fifth does not sum to 1, the transformations $I_1$ and $P$ differ. For instance,
$$I_1(4,8,11)=(9,5,2)$$
$$P(4,8,11)=I_{4+11}(4,8,11)=(11,7,4).$$
The transformations $W$ and $P$ agree only when the first and last positions contain the two pitch classes spanning a perfect fifth.

See Figure~\ref{fig:reflection_comparisons} for a comparison of the graphs of $I_1$, $P$, and $W$ on all 144 ordered consonant triads.

\begin{figure}[h]
    \centering
    \begin{subfigure}[b]{.45\textwidth}
        \centering
        \includegraphics[width=2.7in]{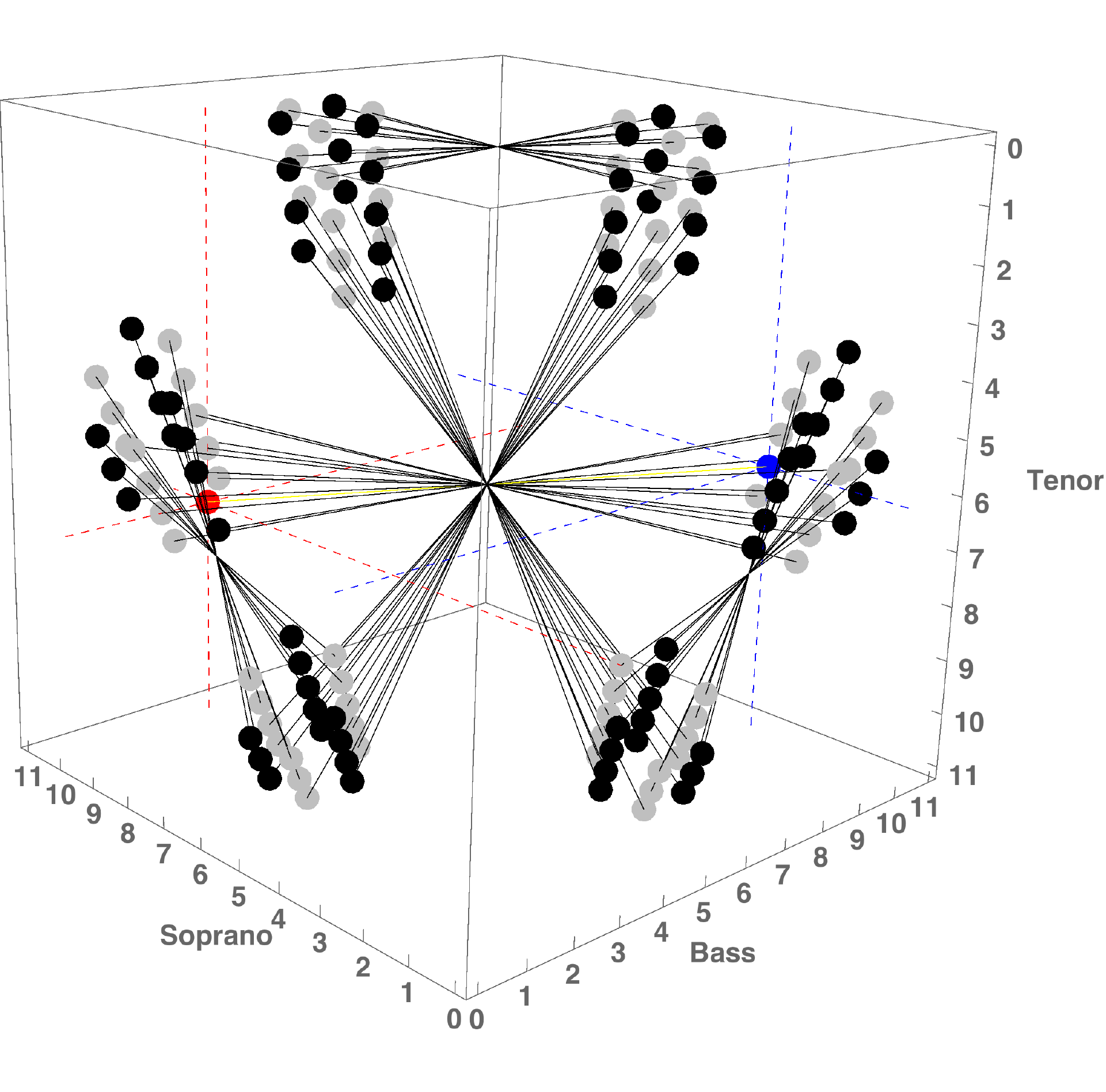}
        \caption{Global reflection $I_1$.}\label{}
    \end{subfigure}%
    \begin{subfigure}[b]{.45\textwidth}
        \centering
        \includegraphics[width=2.7in]{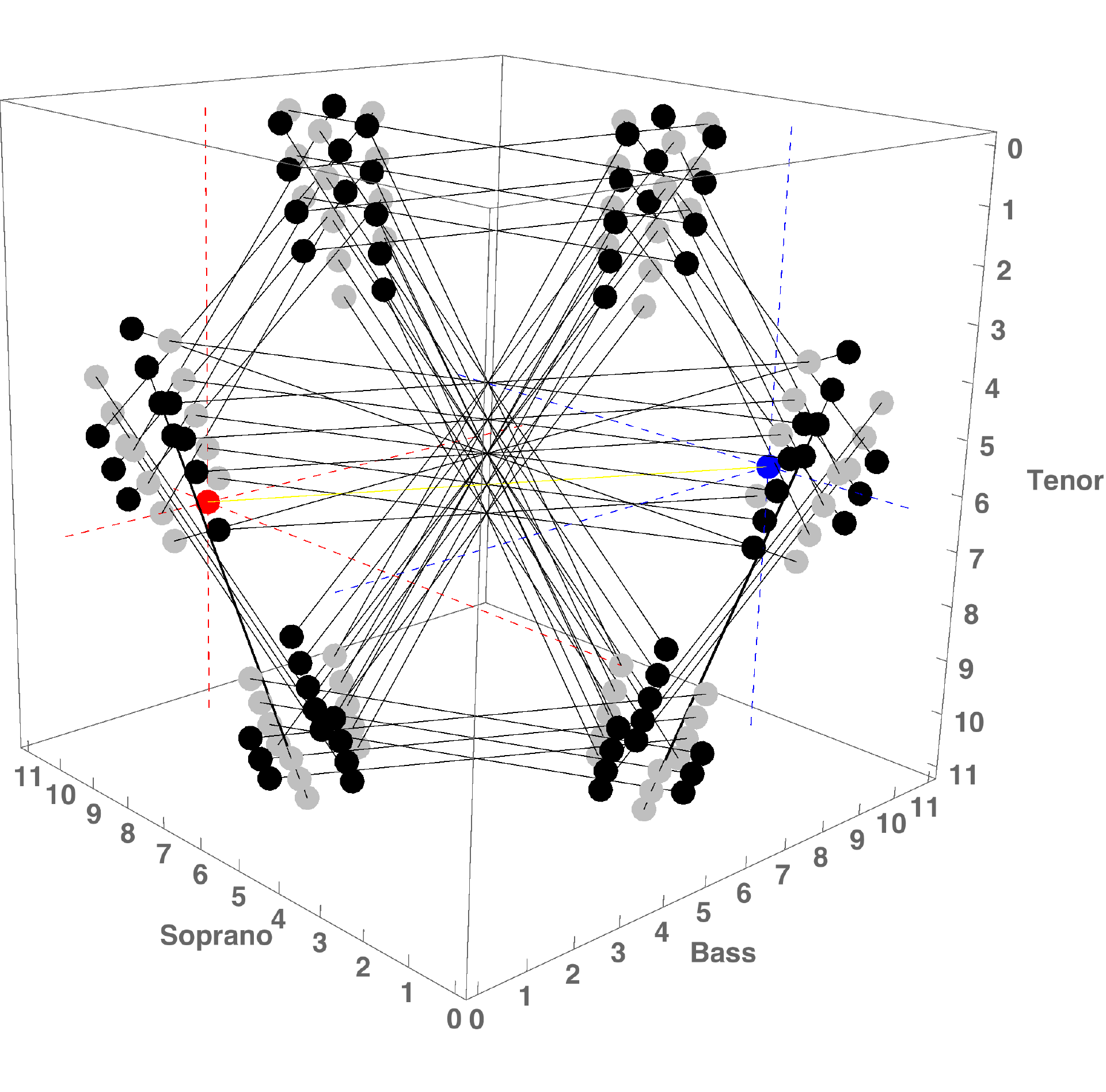}
        \caption{Contextual reflection $P$.}\label{}
    \end{subfigure}
   \begin{subfigure}[b]{1\textwidth}
        \centering
        \includegraphics[width=2.7in]{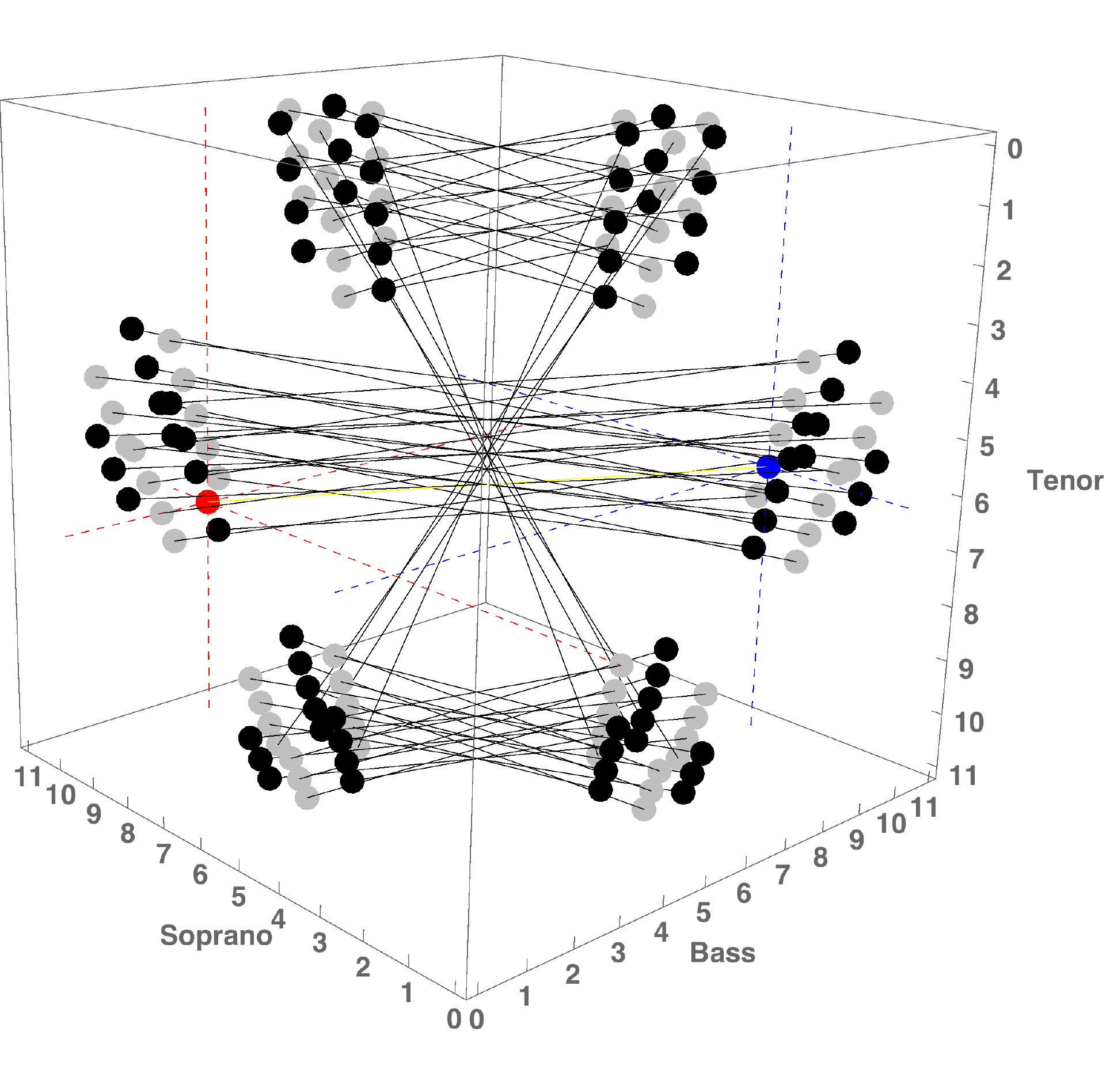}
        \caption{Voicing reflection $W$.}\label{}
    \end{subfigure}
   \caption{Comparison of global reflection $I_1$, contextual reflection $P$, and voicing reflection $W$. The cubes represent the complete voicing space $\mathbb{Z}_{12}^{\times 3}$. The 144 dots represent the voicings all consonant triads. Each black dot represents (a voicing of) a major triad, each gray dot represents (a voicing of) a minor triad. The red dot is $E\flat$ in root position $(3,7,10)$, and the blue dot is $e\flat$-minor in dualistic root position voicing $(10,6,3)$.} \label{fig:reflection_comparisons}
\end{figure}

\subsection{Motivational Problembeispiel} \label{subsec:MotivationalProblembeispiel} \leavevmode \smallskip

We motivate the mathematical questions and answers of the present paper with a new viewpoint on Straus' interpretation of Webern, Concerto for Nine Instruments, op. 24, Second Movement \cite[pages 57--61]{StrausContextualInversions}.
Using the usual encoding of pitch classes as integers modulo 12 and 3-pitch sequences as 3-tuples of such,
we horizontally list in Figure~\ref{fig:redo_of_Straus_on_Webern} the enchained sequences of measures 15--21 and 22--27 {\it in the order they occur in the score}.
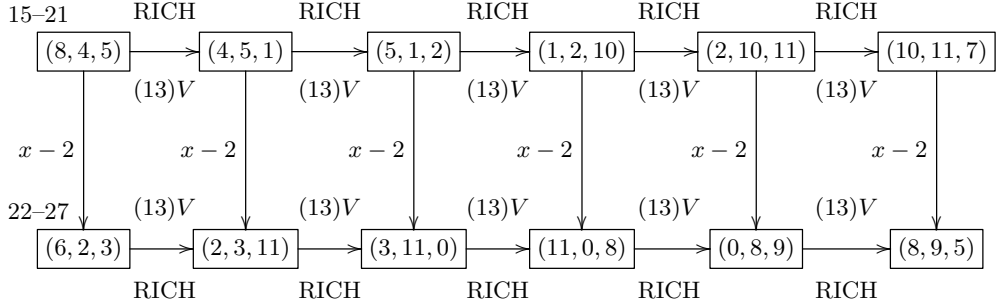
\begin{figure}[h]
\begin{center}
{\scriptsize
$$ \hspace{-.75in}
\renewcommand{\labelstyle}{\textstyle}
\entrymodifiers={+<2mm>[F-]}
%\xy (0,0)*{(2,1,5)};(4.5,4.5)*{1}; \endxy
\xymatrix@C=2pc@R=5pc{(8,4,5) \ar[r]_*!/_7pt/{(13)V}^*!/^11pt/{\text{RICH}} \ar[d]_{x-2} {}\save[]+<-0.6cm,.5cm>*\txt<8pc>{%
15--21} \restore & (4,5,1) \ar[r]^*!/^11pt/{\text{RICH}}_*!/_7pt/{(13)V} \ar[d]_{x-2}  & (5,1,2) \ar[r]_*!/_7pt/{(13)V}^*!/^11pt/{\text{RICH}} \ar[d]_{x-2} & (1,2,10) \ar[r]_*!/_7pt/{(13)V}^*!/^11pt/{\text{RICH}} \ar[d]_{x-2}  & (2,10,11) \ar[r]_*!/_7pt/{(13)V}^*!/^11pt/{\text{RICH}} \ar[d]_{x-2} & (10,11,7) \ar[d]_{x-2} \\
(6,2,3) \ar[r]^*!/^7pt/{(13)V}_*!/_11pt/{\text{RICH}} {}\save[]+<-0.6cm,.5cm>*\txt<8pc>{%
22--27} \restore & (2,3,11) \ar[r]^*!/^7pt/{(13)V}_*!/_11pt/{\text{RICH}} & (3,11,0) \ar[r]^*!/^7pt/{(13)V}_*!/_11pt/{\text{RICH}} & (11,0,8) \ar[r]^*!/^7pt/{(13)V}_*!/_11pt/{\text{RICH}} & (0,8,9) \ar[r]^*!/^7pt/{(13)V}_*!/_11pt/{\text{RICH}}  \restore & (8,9,5) }$$ }
\end{center}
\caption{Interpretation of Webern, Concerto for Nine Instruments, op. 24, Second Movement, measures 15--21 and 22--27. Each 3-note sequence is a permutation of some transposition/inversion of $\{0,1,4\}$. The rows are connected by retrograde inversion enchaining, abbreviated RICH. The pattern terminates after 6 sequences, but could continue to 8 sequences to traverse a complete cycle that exhausts the octatonic $\{1,2,4,5,7,8,10,11\}$ by Corollarly~6.2 of \cite{fiorenollsatyendraSchoenberg}.} \label{fig:redo_of_Straus_on_Webern}
\end{figure}
Consecutive interlocking 3-note sequences are connected by the RICH transformation, which is an acronym for {\it retrograde inversion enchaining} introduced by Lewin in \cite{LewinGMIT}, and first described in \cite{lewin1977}. The RICH transformation assigns to a pitch class sequence the reversed reflection whose first two numbers are the last two numbers of the input (compare Figure~\ref{fig:redo_of_Straus_on_Webern}).

Straus implements the RICH transformation of Figure~\ref{fig:redo_of_Straus_on_Webern} with two transformations he calls ``$L$'' and ``$P$'' that do not consider the ordering. The definitions of ``$L$'' and ``$P$'' are complex, as the 3-note sequences of Figure~\ref{fig:redo_of_Straus_on_Webern} are not consonant, and one must make some arbitrary conventions and refer to prime forms to make a definition analogous to neo-Riemannian $L$ and $P$. However, the analogy with neo-Riemannian $L$ and $P$ ends quickly, because the standard neo-Riemannian $PL$-cycle has an underlying hexatonic set covered by 6 cycle chords, whereas the two (incomplete) cycles of Figure~\ref{fig:redo_of_Straus_on_Webern} have underlying octatonic sets, each covered by 8 cycle chords. Simply replacing ``$L$'' by ``$R$'' to instead make an analogy with the neo-Riemannian octatonic $PR$-cycles does not solve the problem: no matter which labelling convention one chooses, a reordering of the 3-note sequences will lead to a clash with standard neo-Riemannian notations, as we can see in Table~\ref{table:restriction_of_J_to_orbits}.

In this paper we instead propose to use voice transformations $U$, $V$, and $W$ as defined in formulas \eqref{equ:P}, \eqref{equ:L}, \eqref{equ:R} of Section~\ref{subsec:Definition_of_J}, in combination with permutations such as $(13)$. Our usage of $(13)V$ for RICH in Figure~\ref{fig:redo_of_Straus_on_Webern} respects ordering, is immediately defined in terms of a straightforward formula without consonant connotations, and offers an economy of description with one transformation instead of two. Moreover, we know that $(13)V$ commutes with affine transformations such as $x-2$ by Proposition~\ref{prop:Centralizer_of_J_in_Aff(3,Z12)}.

We have arrived at the following questions, which we solve in the present paper.
\begin{itemize}
\item
What is the structure of the group $\calj$ generated by the voicing reflections $U$, $V$, $W$ {\it globally defined on} $\mathbb{Z}^{\times 3}_{12}$ by formulas \eqref{equ:P}, \eqref{equ:L}, \eqref{equ:R} as in Section~\ref{subsec:Definition_of_J}?
\item
Which affine endomorphisms of $\mathbb{Z}_{12}^{\times 3}$ commute with $U$, $V$, and $W$ and enable vertical morphisms such as in Figure~\ref{fig:redo_of_Straus_on_Webern}, or in Figure 10 of \cite{fiorenollsatyendraSchoenberg}?
\item
What is the structure of the subgroup of $GL(3, \mathbb{Z}_{12})$ generated by $\calj$ and the group $\Sigma_3$ of permutations on three letters?
\item
How can we describe flip-flop cycles more economically with a single transformation, as in the description of the $PL$-network in diagram \eqref{equ:PL_network_of_hexatonic_cycle} via the sole transformation $\langle -, 0, 8 \rangle$, or the description of the rows of Figure~\ref{fig:redo_of_Straus_on_Webern} via the sole transformation $(13)V$? Moreover, in doing this replacement, under which conditions can we retain an affine map as a morphism, as with $x-2$ in Figure~\ref{fig:redo_of_Straus_on_Webern}?
\item
How can we recover known duality theorems?
\item
How can we characterize a certain linear representation $\calh$ of Hook's uniform triadic transformations as a consequence of the Structure Theorem~\ref{thm:structure_of_J}?
\end{itemize}

\subsection{Outline of Contents} \leavevmode \smallskip

To keep the article self-contained, and to motivate the algebraic structures under investigation, we begin in Section~\ref{sec:Recollection_of_PLR} with a rapid review of the neo-Riemannian transformations $P$, $L$, $R$, their dihedral group, and their centralizer. We clarify the difference between two possible extensions of $P$, $L$, $R$: as contextual inversions via local permutation conjugation like in \cite{fiorenollsatyendraMCM2013,fiorenollsatyendraSchoenberg} and as discussed above, or as voicing reflections $W$, $V$, and $U$, which is the main subject of the present paper. In Section~\ref{sec:Recollection_of_PLR} we also foreshadow a normal form result and the connection to uniform triadic transformations by comparing $RL$ to $UV$ and writing it as a uniform triadic transformation.

Section~\ref{sec:The_Group_J} is an extensive study of the structure of the group $\calj$ generated by the voicing reflections $U$, $V$, and $W$. We first explain how $\calj$ restricts to six different $PLR$-groups on the various consonant orbits, and how each single voicing reflection restricts twice to three different $P$, $L$, $R$ transformations. Our first main result is Structure Theorem~\ref{thm:structure_of_J}, which specifies relations between the generators of $\calj$, gives a normal form for elements of $\calj$ together with {\it Schritt}-{\it Wechsel} type formulae, and identifies $\calj$ as a semi-direct product $\mathbb{Z}_2 \ltimes (\mathbb{Z}_{12} \times \mathbb{Z}_{12})$. Matrix representations for the normal form are in Remark~\ref{rem:matrix_representation_of_J_normal_form}. Then we go on to find the center of $\calj$ and the centralizers of $\calj$ in $GL(3,\mathbb{Z}_{12})$, $M(3,\mathbb{Z}_{12})$, $\text{Aff}(3,\mathbb{Z}_{12})$, and $\text{Aff}^\times(3,\mathbb{Z}_{12})$ in Propositions~\ref{prop:Center_of_J}, \ref{prop:Centralizer_of_J_in_GL3}, and \ref{prop:Centralizer_of_J_in_Aff(3,Z12)}. In Sections~\ref{subsec:Sigma3_J} and \ref{subsec:Triadic_Orbits} we bring permutations into the picture: Proposition~\ref{prop:Sigma3_J_is_SemiDirectProduct} identifies the subgroup of $GL(3, \mathbb{Z}_{12})$ generated by permutations $\Sigma_3$ and $\calj$ as the semi-direct product $\Sigma_3 \ltimes \calj$, while Section~\ref{subsec:Triadic_Orbits} distinguishes various relevant subgroups of $\Sigma_3 \ltimes \calj$ and important orbits. Section~\ref{subsec:Computer_Observations_On_Traces} presents the traces of normal form elements of $\Sigma_3 \ltimes \calj$, as determined by a computer.

Section~\ref{sec:Musical_Examples_And_Consequences} presents a variety of musical examples and consequences of the foregoing results on $\calj$ and $\Sigma_3$. Section~\ref{subsec:Grail_Motive} finds four elements of $\Sigma_3 \ltimes \calj$ that realize the flip-flop cycle $PLP$, $L$ from Wagner's Grail motive in {\it Parsifal}. In Section~\ref{subsec:Recalcitrant Viola} we revisit our earlier work with Ramon Satyendra \cite{fiorenollsatyendraSchoenberg} on Schoenberg, String Quartet in $D$ minor, op.~7 and include a viola passage via $(13)L$, map the result on a $PR$-cycle and a $PL$-cycle, and offer a more economical description in terms of $(13)V$. In Section~\ref{subsec:Affine_Morphisms} we continue with our work on Schoenberg to illustrate how Proposition~\ref{prop:Centralizer_of_J_in_Aff(3,Z12)} provides morphisms of generalized interval systems. In Section~\ref{subsec:Diatonic_Falling_Fifth} we remark that the present paper is valid not only to $\mathbb{Z}_{12}$ but for any $\mathbb{Z}_{n}$. In particular Theorem~\ref{thm:structure_of_J} applies to $\mathbb{Z}_7$, and we realize the diatonic falling fifths sequence via a linear transformation. Another interesting {\it mod} 7 example is in Section~\ref{subsec:SchillingerNetwork}, where we further specify Schillinger's $M_2$ map between Rimsky-Korsakov's {\it Hymn to the Sun} and Youmann's {\it Without a Song}. Section~\ref{subsec:Recovering_PLR-TI_Duality_and_FS} applies the commutativity Proposition~\ref{prop:Centralizer_of_J_in_Aff(3,Z12)} to recover the classical Lewinian duality of $PLR$ and $T/I$ as well as the special cases of pitch-class segment duality \cite{fioresatyendra2005} needed in the analysis \cite{fiorenollsatyendraSchoenberg} of Schoenberg, String Quartet in $D$ minor, op. 7.

In Section~\ref{sec:A_Linear_Rep_of_the_UTTs} we define and characterize a representation $\rho \co \calu \to GL(3, \mathbb{Z}_{12})$ of Hook's uniform triadic transformations group $\calu$. Essentially, for a uniform triadic transformation $u$, the linear map $\rho(u)$ is the unique linear extension of $u$ from root position consonant triads to all 3-tuples. Most of the results of Section~\ref{sec:A_Linear_Rep_of_the_UTTs} rely on the $\calj$ Structure Theorem~\ref{thm:structure_of_J}. We characterize the image of the embedding $\rho$ as the subgroup $\calh$ of $\Sigma_3 \ltimes \calj$ which preserves root position consonant triads, and investigate the structure of $\calh$ in terms of our results about $\calj$. In Proposition~\ref{prop:H_decomposition} we find a normal form associated to the decomposition $\calh=\calj^+ \;\bigsqcup \;(13) \calj^-$ where $\calj^+$ and $\calj^-$ are the mode-preserving respectively mode-reversing transformations in $\calj$. The image $\calh$ is generated by $(13)U$ and $(13)W$ as in Proposition~\ref{prop:H_two_generators}, which leads to a second normal form for $\calh$ in Proposition~\ref{prop:products_of_generators}. In Theorem~\ref{thm:H_is_semidirect_product} we prove that $\calh$ is the semi-direct product $\langle (13)W \rangle \ltimes \calj^+$. In Theorem~\ref{thm:towards_wreath_product} and Corollary~\ref{cor:New_Basis_Uniform_Formula} we select a new basis for $\calj^+$ in $\calh$ to prove that $\calh$ is a wreath product $\Sigma_2 \lbag \mathbb{Z}_{12}$ and to express the representation $\rho$ more directly in terms of uniform triadic transformation notation $\langle s, m ,n \rangle$. By this point it is clear that $\rho$ is an isomorphism onto its image.

In the Conclusion Section~\ref{sec:Conclusion} we revisit the {\it Problembeispiel} of Section~\ref{subsec:MotivationalProblembeispiel} and recall the more economical description in terms of the element RICH=$(13)V$ in $\calh$, also utilizing the centralizer results of Proposition~\ref{prop:Centralizer_of_J_in_Aff(3,Z12)}.

\subsection{Related Work of Rachel Hall} \leavevmode \smallskip

Rachel Hall's contribution \cite{Hall_LinearContextualTransformations} made initial advances in the study of voicing transformations. Her work is motivated\footnote{In Section~5.3 of \cite{Hall_LinearContextualTransformations}, Hall's mathematicizations of $P$, $L$, $R$ on consonant triads in dualistic root position are inspired by the equivalent descriptions of \cite{fioreMusicAndMathematics} and \cite{fioresatyendra2005}.} by \cite{fioresatyendra2005}. We acknowledge several aspects of her considerations as a groundwork for our own investigations. For reasons presented above we do not use Hall's proposed term {\it linear contextual transformations.}  Unlike our investigations of transformations on the discrete space $\mathbb{Z}_{12}^{\times 3}$, Rachel Hall \cite{Hall_LinearContextualTransformations} studies certain continuous linear transformations on $\mathbb{R}^n$ and makes a connection to the work of Callender, Quinn, Tymoczko \cite{CallenderQuinnTymoczkoScience, TymoczkoGeomMusChrdsScience}.
Concerning the mathematical findings, none of the theorems in our present paper are contained in \cite{Hall_LinearContextualTransformations}. Hall already noticed the representability of Hook's UTT-group in terms of voicing transformations in her Section~5.5. We elaborate this finding in several regards.

In \cite[Definition~3.1]{Hall_LinearContextualTransformations}, Hall defines the {\it linear contextual group} $\mathscr{C}^n$ to be the group of invertible linear maps $\mathbb{R}^n \to \mathbb{R}^n$ that commute with transposition and inversion, and induce well-defined linear transformations on the quotient $\mathbb{R}^n/(12\mathbb{Z})^n$. On page 112, she characterizes this group as the discrete group of invertible matrices with integer entries which fix the vector $\mathbf{1}=(1,1,\dots,1)$ of all 1's. $$\mathscr{C}^n=\{M \in GL(n,\mathbb{Z}) \; \vert \; M \mathbf{1}=\mathbf{1}\}$$
With respect to the basis
$$\mathbf{1},\; (-1, 1, 0, \dots , 0), \;(-1, 0, 1, 0, \dots , 0),\; \dots , \; (-1, 0, \dots , 0, 1),$$
the elements of $\mathscr{C}^n$ have a very nice form indicated in (3.2) on page 113 of her paper, which then leads to an isomorphism of $\mathscr{C}^n$ with the group of affine linear maps $\mathbb{Z}^{n-1} \to \mathbb{Z}^{n-1}$, see \cite[Theorem~3.1]{Hall_LinearContextualTransformations}. She proposes an encoding of $\mathscr{C}^n$ as elements $\langle \mathbf{a}, A \rangle$ with $\mathbf{a} \in \mathbb{Z}^{n-1}$ and $A \in GL(n-1,\mathbb{Z})$. In Section 5, she discusses how familiar transformations can be encoded in this way, such as contextual transpositions, retrograde, and uniform triadic transformations.

In Table~1 on page 117, Hall describes various subgroups of $\mathscr{C}^n$ and their semi-direct product structures.

\section{Recollection of the neo-Riemannian $PLR$-Group} \label{sec:Recollection_of_PLR}

The neo-Riemannian $PLR$-group is the algebraic point of departure for the present investigation. The original and authoritative source is David Lewin's pioneering book \cite{LewinGMIT}. A recent exposition of the neo-Riemannian $PLR$-group and its duality with the $T/I$-group can be found in \cite{cransfioresatyendra}. See also Fiore--Satyendra \cite{fioresatyendra2005} for an extension to $n$-tuples satisfying a tritone condition. Hook also treats the duality in \cite{hookUTT2002}.

The neo-Riemannian operations $P$, $L$, and $R$ are involutions on the set of 24 major and minor chords. The bijection $P$ assigns the {\it parallel major or minor chord}. The bijection $L$ is the {\it leading tone exchange}, which lowers the root of a major chord by a half step, and raises the fifth of a minor by half step. The bijection $R$ assigns the {\it relative major or minor chord}. For example, we have
$$P(C\text{-major}) = c\text{-minor} \hspace{.4in}
L(C\text{-major}) = e\text{-minor} \hspace{.4in}
R(C\text{-major}) = a\text{-minor}.$$
Musical motivation for these three transformations of consonant triads is the boundary conditions that input/output chords overlap in two pitches (or pitch classes) while the third pitch (or pitch class) moves by a minimal amount.

How can major/minor chords and these operations be mathematized so that no music-theoretical considerations are needed to compute them? We apply the methods of \cite{fioresatyendra2005}. Consider the set $S$ of certain 3-tuples with entries in $\mathbb{Z}_{12}$, namely the set $S$ consists of the 12 {\it major chords in root position}
$$(r,r+4,r+7) \hspace{.75in} r \in \mathbb{Z}_{12}$$
and the 12 {\it minor chords in reversed root position}
$$(r+7,r+3,r) \hspace{.75in} r \in \mathbb{Z}_{12}.$$
The bijections $P,L,R\co S \to S$, called {\it parallel}, {\it leading tone exchange}, and {\it relative} are formulaically defined on the set $S$ by
\begin{equation} \label{equ:P}
P(x,y,z)=(z,\;-y+x+z,\;x)\phantom{.}
\end{equation}
\begin{equation} \label{equ:L}
L(x,y,z)=(-x+y+z,\;z,\;y)\phantom{.}
\end{equation}
\begin{equation} \label{equ:R}
R(x,y,z)=(y,\;x,\;-z+x+y).
\end{equation}
The subgroup of $\text{Sym}(S)$ generated by the involutions $P$, $L$, and $R$ is called the {\it neo-Riemannian $PLR$-group} or simply  {\it $PLR$-group}. It acts simply transitively on $S$, it is dihedral of order 24, and is generated by $L$ and $R$ without $P$, in fact $P=R(LR)^3$.

The composite $RL$ has order 12, it adds 7 to each major triad and subtracts 7 from each minor triad, and preserves mode. As a {\it uniform triadic transformation on abstract triads}, $RL$ would be notated as $\langle +,7,-7 \rangle$, see Section~\ref{subsec:UTT_review}. We also see this uniform behavior when we consider $RL$ as the restriction of $UV$ in Section~\ref{subsec:Definition_of_J} to $S$ and consider the formula $UV(x,y,z)=(x,y,z)+(z-x)$ from Theorem~\ref{thm:structure_of_J}~\ref{item:action_of_normal_form}. Namely, on a major triad in $S$, we have $(z-x)=7$ but on a minor triad in $S$ we have $(z-x)=-7$. This highlights the importance of using ordered tuples in the algebraic formulation of $P$, $L$, and $R$.

Similarly, $LR$ has order 12, as it is the inverse of $RL$. The composites $PL$ and $LP$ have order 3, while $PR$ and $RP$ have order 4.

The involutions $P$, $L$, and $R$ on $S$ are {\it contextual inversions} in that they reflect chords across an axis that is determined by the input chord, rather than across a preselected axis for all chords. As a consequence, $P$, $L$, and $R$, so also the entire $PLR$-group, commute with the transposition and inversion operations $\mathbb{Z}_{12} \to \mathbb{Z}_{12}$ acting componentwise on 3-tuples.
$$T_n(x):=x+n \hspace{1in} I_n(x)=-x+n \hspace{1in} x,n\in \mathbb{Z}_{12}$$
These 24 transposition and inversion operations form the so-called {\it $T/I$-group}. The slash in the name $T/I$-group does {\it not} indicate any kind of quotient. {\it Lewinian duality} is the theorem that the $PLR$-group and the the $T/I$-group centralize each other in $\text{Sym}(S)$ and both act simply transitively on $S$.

The relationship between $P$, $L$, $R$ on $S$ and ordinary inversions $I_n$ is
\begin{equation} \label{equ:PI}
P(x,y,z)=I_{x+z}(x,y,z)\phantom{.}
\end{equation}
\begin{equation} \label{equ:LI}
L(x,y,z)=I_{y+z}(x,y,z)\phantom{.}
\end{equation}
\begin{equation} \label{equ:RI}
R(x,y,z)=I_{x+y}(x,y,z).
\end{equation}
In the left side of Figure~\ref{fig:Hexatonic_Cycle_Circle} we illustrated formulas \eqref{equ:PI} and \eqref{equ:LI} by revisiting the hexatonic cycle discussed in Section~\ref{subsec:Motivation_for_Transformation}, but now using the {\it ordered} triads in $S$.

\begin{figure}[h]
  \centering
  \hspace{-.45in}
  \includegraphics[width=6.5in]{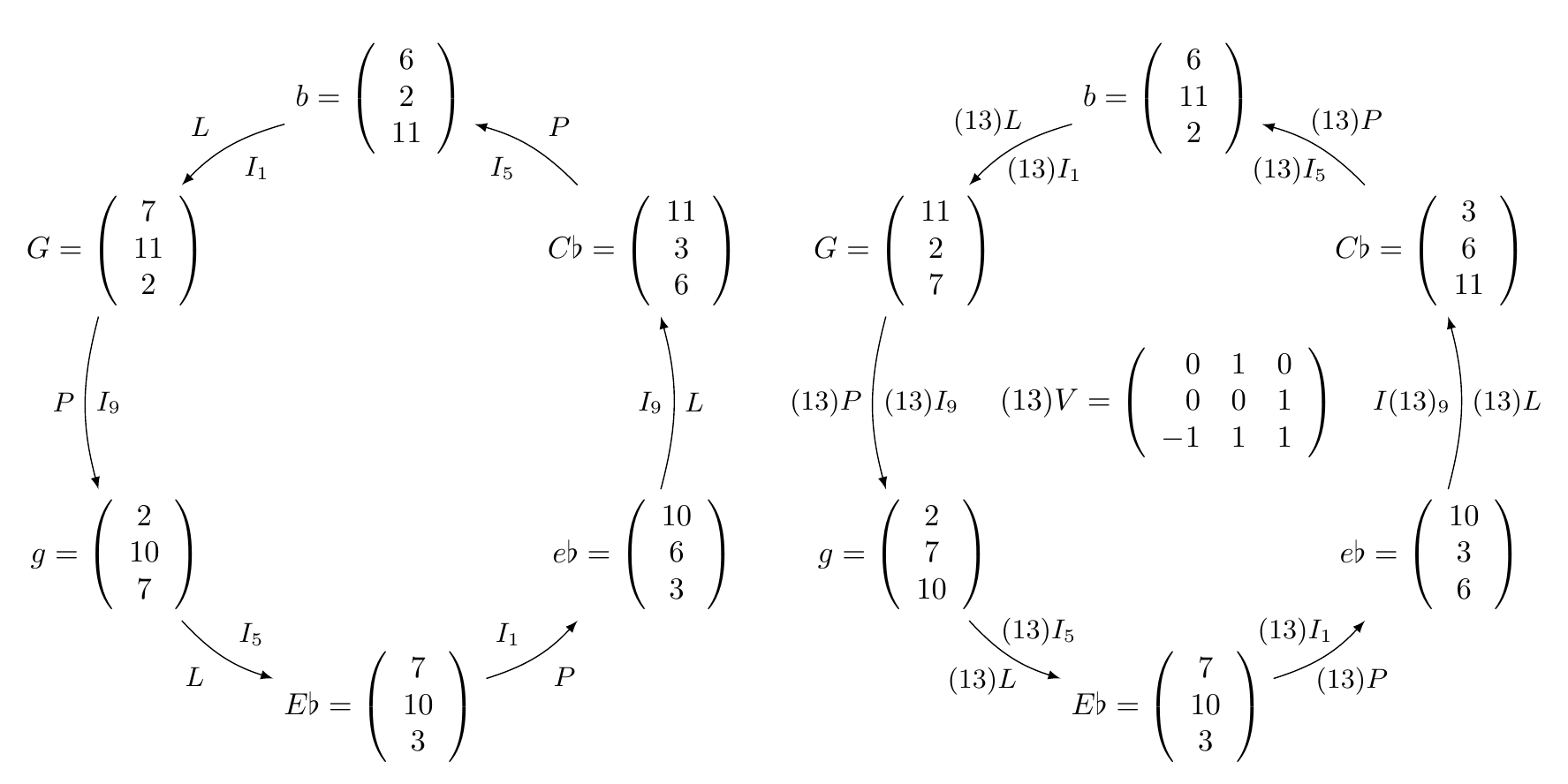}\\
  \caption{Left network: A revisitation of the hexatonic cycle \eqref{equ:hexatonic_cycle} with chords in dualistic root position. The two interpretations \eqref{equ:PL_network_of_hexatonic_cycle} and \eqref{equ:TI_network_of_hexatonic_cycle} are labelled and illustrate the relationships \eqref{equ:PI} and \eqref{equ:LI}. Right network: The global reflections $I_1, I_5, I_{11}$ and the contextual reflections $P, L$ act analogously on different voicings. But in contrast to the dualistic root position triads, these voicings form a cyclic orbit under the voicing transformation $(1 3) V$, that is, a RICH-cycle in the sense of Section \ref{subsec:MotivationalProblembeispiel}.}\label{fig:Hexatonic_Cycle_Circle}
\end{figure}

The present paper studies the {\it naive} extension of $P$, $L$, and $R$ defined on $S$ to linear functions $W$, $V$, and $U$ defined on all of $\mathbb{Z}_{12}^{\times 3}$. The formulas \eqref{equ:P}, \eqref{equ:L}, \eqref{equ:R}, or equivalently \eqref{equ:PI}, \eqref{equ:LI}, \eqref{equ:RI}, work equally well to define $W,V,U\co\mathbb{Z}_{12}^{\times 3} \to \mathbb{Z}_{12}^{\times 3}$ in Section~\ref{subsec:Definition_of_J}. We call $W$, $V$, and $U$ {\it naive} extensions because they are not proper extensions to contextually defined inversions. Namely, $W(0,4,7)=(7,3,0)$ but $W(4,7,0)=(0,9,4)$, that is $W$ acts on root position $C$-major as $P$, but $W$ acts on ``closed first inversion'' $C$-major as $R$, not exactly consistent behavior from a neo-Riemannian point of view. In anticipation of Section~\ref{sec:The_Group_J}, the right side of Figure~\ref{fig:Hexatonic_Cycle_Circle} illustrates the benefit of this point of view. The composition of the voicing transformation $V$ with the voice permutation $(13)$ allows the re-interpretation of a flip-flop cycle in terms of a purely cyclic orbit, and thereby it realizes a proposal by John Clough \cite{cloughFlipFlop} through a voicing transformation. With respect to contextual point of view, we proposed in \cite{fiorenollsatyendraSchoenberg} to extend $P$ to all permutations of major and minor chords via local conjugation. For instance, on $(1\;2\;3)S$ we define $P$ to be $(1\;2\;3)P(1\;2\;3)^{-1}$.  We prefer to call such extensions of $P$, $L$, and $R$ via local conjugation {\it contextual reflections} or {\it contextual inversions}, and to call $W$, $V$, and $U$ {\it voicing reflections}. The extension of $P$, $L$, and $R$ to contextual reflections via local conjugation is the same as the following. Consider a consonant triad $(x,y,z)$ in any order. To compute $P(x,y,z)$, we look inside the chord to find the two tones $p$ and $q$ that span a perfect fifth, and then compute $P(x,y,z):=I_{p+q}(x,y,z)$. To compute $L(x,y,z)$, we look inside the chord to find the two tones $p$ and $q$ that span a minor third, and then compute $L(x,y,z):=I_{p+q}(x,y,z)$. To compute $R(x,y,z)$, we look inside the chord to find the two tones $p$ and $q$ that span a major third, and then compute $R(x,y,z):=I_{p+q}(x,y,z)$.

The geometry behind the $PLR$-group, in the form of the {\it Tonnetz}, its dual, and related structures, has been studied and exposed in many places, for instance Catanzaro \cite{Catanzaro}, Clough \cite{CloughRudimentary}, Cohn \cite{cohn1997}, Douthett--Steinbach \cite{DouthettSteinbach}, Fiore--Crans--Satyendra \cite{cransfioresatyendra}, Gollin \cite{gollin}, and Waller \cite{Waller}. Extensions of the commutativity between $PLR$ and $T/I$ have been studied by Peck \cite{peckGeneralizedCommuting}.

\section{The Group $\mathcal{J}$ and its Extension $\Sigma_3 \ltimes \calj$} \label{sec:The_Group_J}

The review of the $PLR$-group in Section~\ref{sec:Recollection_of_PLR} motivates our definition of the group $\calj$ as  generated by the voicing reflections $U$, $V$, and $W$.

\subsection{Definition of $\calj$ via Generators $U$, $V$ and $W$} \label{subsec:Definition_of_J} \leavevmode \smallskip

We extend the formulas \eqref{equ:P}, \eqref{equ:L}, \eqref{equ:R} on major and minor triads to all of $\mathbb{Z}_{12}^{\times 3}$ to define linear automorphisms $U$, $V$, and $W$, each having the form of switching two coordinates, and adding their sum to the inverse of the third. In other words, $U$, $V$, and $W$ are voicing reflections, where the axis of reflection is determined by the input 3-tuple.
$$\begin{array}{rcc}
U(x,y,z):=J^{1,2}(x,y,z):=I_{x+y}(x,y,z)&=&(y,x,-z+x+y)\\
V(x,y,z):=J^{2,3}(x,y,z):=I_{y+z}(x,y,z)&=&(-x+y+z,z,y)\\
W(x,y,z):=J^{3,1}(x,y,z):=I_{z+x}(x,y,z)&=&(z,-y+x+z,x)
\end{array}$$

We identify $U$, $V$, and $W$ with their matrix representations as elements of $GL(3, \mathbb{Z}_{12})$.
$$U = \left( \begin{array}{ccc} 0 & 1 & 0 \\ 1 & 0 & 0 \\ 1 & 1 & {-1}\end{array} \right ), \;\;
V = \left( \begin{array}{ccc} {-1} & 1 & 1 \\ 0 & 0 & 1 \\ 0 & 1 & 0 \end{array}  \right ), \;\;
W = \left( \begin{array}{ccc} 0 & 0 & 1 \\ 1 & {-1} & 1 \\ 1 & 0 & 0 \end{array}  \right )$$
Let $\mathcal{J}$ be the subgroup of the general linear group $GL(3, \mathbb{Z}_{12})$ that is generated by $U$, $V$, and $W$. Notice that the determinant of each of $U$, $V$, and $W$ is 1, so that $\calj$ is actually a subgroup of the special linear group $SL(3, \mathbb{Z}_{12})$.

\subsection{Consonant Orbits of $\calj$} \leavevmode \smallskip

As a first step in understanding the group $\calj$, we may consider its action on the 144 arbitrarily ordered consonant triads in $\mathbb{Z}_{12}^{\times 3}$, find their orbits, and determine the restriction of $\calj$ to the individual orbits.

Consider the six $T/I$-orbits of the six reorderings of the $C$-major chord $(0,4,7)$. Each of the three generators $U$, $V$, $W$ of $\calj$ preserves these six $T/I$-orbits because $U$, $V$, $W$ act locally as $P$, $L$, or $R$ (exactly which of $U$, $V$, $W$ restricts to $P$, $L$, or $R$ depends on to which of the six $T/I$-orbits we are restricting). Each generator of $\calj$ restricts twice to $P$, $L$, and $R$ operations on the six $T/I$-orbits, as Table~\ref{table:restriction_of_J_to_orbits} indicates.

\begin{table}[h]
\centering
\begin{tabular}{|c|c|c|c|c|c|c|}
\hline
\multicolumn{7}{|c|}{Restriction of $\calj$-Generators to Six $T/I$-Orbits,}  \\
\multicolumn{7}{|c|}{Indicated by $C$-Major Representatives}\\
\hline \hline & closed & closed & closed & open & open & open \\
       & root pos & first inv & second inv & root pos & first inv & second inv \\
 & $\left( \begin{array}{c} 0 \\ 4 \\ 7 \end{array} \right)$ & $\left(\begin{array}{c} 4 \\ 7 \\ 0 \end{array}\right)$ & $\left(\begin{array}{c} 7 \\ 0 \\ 4  \end{array}\right)$ & $\left(\begin{array}{c} 0 \\ 7 \\ 4 \end{array}\right)$ & $\left(\begin{array}{c} 4 \\ 0 \\ 7  \end{array}\right)$ & $\left(\begin{array}{c} 7 \\ 4 \\ 0  \end{array}\right)$ \\
\hline $U$ & $R$ & $L$ & $P$ & $P$ & $R$ & $L$ \\
\hline $V$ & $L$ & $P$ & $R$ & $L$ & $P$ & $R$ \\
\hline $W$ & $P$ & $R$ & $L$ & $R$ & $L$ & $P$ \\
\hline
\end{tabular}
\bigskip
\caption{} \label{table:restriction_of_J_to_orbits}
\end{table}

Thus, the restriction of $\calj$ to the $T/I$-orbit of any reordering of $(0,4,7)$ is a copy of the $PLR$-group and acts simply transitively, so we see that the consonant orbits of $\calj$ are precisely the $T/I$-orbits of the six permutations of $(0,4,7)$.
However, the restriction of $\calj$ to any of its consonant orbits has a nontrivial kernel, for instance $(WV)^3$ is $(PL)^3$ on the orbit of $(0,4,7)$, so the identity there, although $(WV)^3$ is itself a nontrivial element of $\calj$ (for instance, on the orbit of $(4,7,0)$ it is $(RP)^3$ which is not the identity). The kernel of the restriction $r\co \calj \to \text{Sym}(S)$ has 12 elements, since
$$\vert\text{ker}\;r\vert=\frac{\vert \calj \vert}{\vert \text{im} \; r \vert}=\frac{288}{24}=12.$$

The restriction of $\mathcal{J}$ to the six $T/I$-orbits of the permutations of $(0,4,7)$ is a group homomorphism $comp\co \mathcal{J} \to (PLR\text{-group})^{\times 6}$ which on generators is given by the bottom three rows of Table~\ref{table:restriction_of_J_to_orbits}.

\subsection{The Structure of $\mathcal{J}$} \leavevmode \smallskip

\begin{convention} \label{conv:adding_constant_to_3tuple}
In the interest of readability, to indicate the addition of a constant $c\in \mathbb{Z}_{12}$ to each component of $(x,y,z)\in \mathbb{Z}_{12}^{\times 3}$, we write $$(x,y,z)+c:=(x+c,y+c,z+c).$$
\end{convention}

\begin{thm}[Structure of the Group $\calj$] Consider the subgroup $\calj$ of $SL(3, \mathbb{Z}_{12})$ generated by $U$, $V$, and $W$ as in Section~\ref{subsec:Definition_of_J}.
\label{thm:structure_of_J}
\begin{enumerate}
\item
The generators $U$, $V$, and $W$ satisfy the following relations.
\begin{enumerate}
\item  \label{thm:structure_of_J:U_V_W_have_order_2}
Each of $U$, $V$, and $W$ has order 2.
\item \label{thm:structure_of_J:UV_and_UW_have_order_12}
Both composites $UV$ and $UW$ have order 12.
\item \label{thm:structure_of_J:UVW_has_order_2}
The composite $UVW$ has order 2.
\item
The composites $UV$ and $UW$ commute.
\item \label{U-conjugation_is_inversion}
The $U$-conjugation of $(UV)^m$ and $(UW)^n$  is inversion.
$$U^{-1}(UV)^mU=(UV)^{-m} \hspace{1in} U^{-1}(UW)^nU=(UW)^{-n}$$
\end{enumerate}
\item \label{thm:structure_of_J:semi-direct_product_form}
Every element of $\calj$ can be written uniquely in the form
\begin{equation} \label{equ:normal_form}
U^k(UV)^m(UW)^n
\end{equation}
where $k=0,1$ and $m,n=0, 1, \dots, 11$.
\item
The group $\mathcal{J}$ has order $288$.
\item
The group $\mathcal{J}$ is the internal semi-direct product $\langle U \rangle \ltimes \langle UV, UW\rangle$, so is
isomorphic to the semi-direct product $\mathbb{Z}_{2} \ltimes (\mathbb{Z}_{12} \times \mathbb{Z}_{12})$ where $\mathbb{Z}_2$ acts on $\mathbb{Z}_{12} \times \mathbb{Z}_{12}$ via additive inversion.
\item \label{item:action_of_normal_form}
The elements of $\calj$ in the normal form of \eqref{equ:normal_form} act as follows.
$$(UV)^m(UW)^n(x,\;y,\;z)=\big(x,\;y,\;z\big)+ m(z-x) + n(z-y)$$
$$U(UV)^m(UW)^n(x,\;y,\;z)=U(x,\;y,\;z)+ m(z-x) + n(z-y)$$
\item
For completeness, we also observe
$$(VW)^j(x,\;y,\;z)=\big(x,\;y,\;z\big)+ j(x-y)$$
and $VW$ has order 12.
\end{enumerate}
\end{thm}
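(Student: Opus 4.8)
The whole statement follows from one structural observation, around which I would organize the proof: the composites $UV$ and $UW$ are unipotent of the simplest possible shape, namely $I$ plus a square-zero rank-one matrix whose image is the line spanned by $(1,1,1)$. First, substituting into the defining formulas (or squaring the three matrices) gives $U^{2}=V^{2}=W^{2}=I$, which is part~(i)(a). Then put $A:=UV$ and $B:=UW$; composing the formulas and using Convention~\ref{conv:adding_constant_to_3tuple} one finds
\[
A(x,y,z)=(x,y,z)+(z-x),\qquad B(x,y,z)=(x,y,z)+(z-y),
\]
that is $A=I+N$ and $B=I+M$, where $N$ is the $3\times 3$ matrix each of whose rows equals $(-1,0,1)$ and $M$ the one each of whose rows equals $(0,-1,1)$. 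Since each such row sums to $0$ over $\mathbb Z_{12}$, one gets $N^{2}=M^{2}=NM=MN=0$; hence $A^{m}=I+mN$, $B^{n}=I+nM$, the matrices $A$ and $B$ commute (part~(i)(d)), and $A^{m}B^{n}=I+mN+nM$, whose nonidentity part is the matrix with all rows equal to $(-m,-n,m+n)$. This is $I$ exactly when $m\equiv n\equiv 0\pmod{12}$, so $UV$ and $UW$ have order $12$ (part~(i)(b)) and $\langle A,B\rangle\cong\mathbb Z_{12}\times\mathbb Z_{12}$ has order $144$.

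Next I would bring in $U$. From $U^{2}=V^{2}=W^{2}=I$ one reads off $U(UV)U=U^{2}VU=VU=(UV)^{-1}$ and likewise $U(UW)U=(UW)^{-1}$, which is part~(i)(e); in particular $U$ normalizes $\langle A,B\rangle$, acting by additive inversion. The key point is that $\langle U\rangle\cap\langle A,B\rangle=\{I\}$: every $A^{m}B^{n}-I$ has all three rows equal, whereas $U-I$ has rows $(-1,1,0),(1,-1,0),(1,1,-2)$, which are not all equal, so $U\notin\langle A,B\rangle$, and as $U$ has order $2$ the intersection is trivial. Therefore $\langle U,A,B\rangle$ is the internal semidirect product $\langle U\rangle\ltimes\langle A,B\rangle$, of order $2\cdot 144=288$, in which every element is uniquely $U^{k}A^{m}B^{n}$ with $k\in\{0,1\}$, $m,n\in\{0,\dots,11\}$ (a coincidence of two such words would force $U^{k_{1}-k_{2}}\in\langle A,B\rangle$, hence $k_{1}=k_{2}$, and then the independence and order of $A,B$ force $m_{1}\equiv m_{2}$, $n_{1}\equiv n_{2}$). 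Since $V=UA$ and $W=UB$, we have $\calj=\langle U,V,W\rangle=\langle U,A,B\rangle$, which simultaneously yields the normal form~\eqref{equ:normal_form}, the order $288$, and the isomorphism $\calj\cong\mathbb Z_{2}\ltimes(\mathbb Z_{12}\times\mathbb Z_{12})$ with $\mathbb Z_{2}$ acting by additive inversion.

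For the action formulas, $A^{m}B^{n}(x,y,z)=(x,y,z)+m(z-x)+n(z-y)$ is immediate from $A^{m}B^{n}=I+mN+nM$, and applying the linear map $U$, which fixes every constant triple since $U(1,1,1)=(1,1,1)$, gives $U(UV)^{m}(UW)^{n}(x,y,z)=U(x,y,z)+m(z-x)+n(z-y)$; this is part~\ref{item:action_of_normal_form}. For part~(i)(c) I would rewrite $UVW=(UV)W=A\cdot UB=AUB$ and, using the already-proved relations $AU=UA^{-1}$, $BU=UB^{-1}$, $AB=BA$, compute $(AUB)^{2}=AU(BA)UB=AU(AB)UB=A(UA)(BU)B=A(A^{-1}U)(UB^{-1})B=I$; and $UVW=UA^{-1}B\neq I$ in normal form, so $UVW$ has order $2$. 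Finally $VW=(UA)(UB)=U(AU)B=U(UA^{-1})B=A^{-1}B$, so $(VW)^{j}=A^{-j}B^{j}$ has order $12$ and $VW(x,y,z)=(x,y,z)+(x-z)+(z-y)=(x,y,z)+(x-y)$, which is part~(vii). All of this is routine matrix and substitution bookkeeping once the unipotent description of $UV$ and $UW$ is in hand; the only step carrying genuine weight is the disjointness $\langle U\rangle\cap\langle A,B\rangle=\{I\}$, which is exactly what pins $|\calj|$ at $288$ rather than a proper divisor and underwrites uniqueness of the normal form, and even that reduces to the one-line remark that $A^{m}B^{n}-I$ has constant rows while $U-I$ does not. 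I anticipate no real obstacle beyond that observation.
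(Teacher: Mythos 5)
Your proposal is correct; every step checks out, including the slightly delicate points (that $U-I$ does not have all rows equal, so $U\notin\langle UV,UW\rangle$, and the relation-juggling that gives $(UVW)^2=\mathrm{Id}$ and $VW=(UV)^{-1}(UW)$). The underlying engine is the same as the paper's — the recognition that $UV$ and $UW$ act by adding $z-x$ and $z-y$ to every coordinate — but your packaging of it differs in three places, each a genuine simplification of bookkeeping. First, by writing $UV=I+N$, $UW=I+M$ with $N,M$ rank-one and square-zero, you get the orders, the commutativity, and the full identification $\langle UV,UW\rangle\cong\mathbb{Z}_{12}\times\mathbb{Z}_{12}$ in one stroke from ``$A^mB^n-I$ has all rows equal to $(-m,-n,m+n)$''; the paper instead proves $\langle UV\rangle\cap\langle UW\rangle=\{\mathrm{Id}\}$ and $U\notin\langle UV,UW\rangle$ by evaluating candidate elements on test vectors such as $(1,2,3)$, $(1,1,2)$, $(0,0,1)$, the latter after first enumerating the order-$2$ elements of $\langle UV,UW\rangle$ — your uniform ``rows equal'' membership criterion replaces those ad hoc checks. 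Second, for existence of the normal form $U^k(UV)^m(UW)^n$ the paper runs an explicit rewriting algorithm on words in $U,V,W$ (converting two letters at a time from the right); you instead invoke the standard semidirect-product fact that $\langle U\rangle\langle A,B\rangle$ is already a subgroup once $U$ normalizes $\langle A,B\rangle$, together with $V=U\!\cdot\!UV$, $W=U\!\cdot\!UW$, which is shorter though less constructive (the paper's algorithm tells you how to normalize a given word). Third, you derive $(UVW)^2=\mathrm{Id}$ and part (vii) purely from the relations already established, where the paper just computes $UVW(x,y,z)=(x,-y+2x,-z+2x)$ directly; both are fine, and your ordering of the parts keeps the logic acyclic since (i)(c) is never used earlier. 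Incidentally, your computation $(VW)^j(x,y,z)=(x,y,z)+j(x-y)$ matches the theorem statement and quietly corrects a typo in the paper's own final line of proof, which writes $j(x-z)$ where $j(x-y)$ is meant.
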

\begin{proof}
\leavevmode
\begin{enumerate}
\item \label{i}
\begin{enumerate}
\item
The computations $U^2=V^2=W^2=\text{Id}$ are straightforward.
\item
To see that $UV$ has order 12, notice that $UV(x,y,z)$ is the addition of $(z-x)$ to each component,
\begin{equation} \label{equ:UV}
UV(x,y,z)=\big(x,\;y,\;z\big)+(z-x)
\end{equation}
(here we use Convention~\ref{conv:adding_constant_to_3tuple}).
Thus, an application of $UV$ to the outcome of \eqref{equ:UV} will similarly add the difference of the third and first components of \eqref{equ:UV}, which is also the addition of $z-x$,
$$\big(z+(z-x)\big)-\big((x+(z-x)\big)=z-x,$$
so that $(UV)^2(x,y,z)$ is the addition of $(z-x)$ {\it twice} to each component. By induction, we have
\begin{equation} \label{equ:UVi}
(UV)^m(x,y,z)=\big(x,\;y,\;z\big)+m(z-x),
\end{equation}
so that $(UV)^{12}=\text{Id}$. The order of $UV$ is not smaller than 12, for instance $(UV)^m(1,3,2)$ is not $(1,3,2)$ for $m=1,\dots,11$ by \eqref{equ:UVi}, so $UV$ now has order 12.
\bigskip

To see that $UW$ also has order 12, we similarly observe that $UW(x,y,z)$ is the addition of $(z-y)$ to each component
\begin{equation} \label{equ:UW}
UW(x,y,z)=\big(x,\;y,\;z\big)+(z-y),
\end{equation}
and
\begin{equation} \label{equ:UWj}
(UW)^n(x,y,z)=\big(x,\;y,\;z\big)+n(z-y),
\end{equation}
and then argue as for $UV$.
\item
A straightforward computation shows $UVW(x,y,z)=(x,-y+2x,-z +2x)$ and $(UVW)^{2}=\text{Id}$.
\item \label{UVUW}
From equations \eqref{equ:UV} and \eqref{equ:UW} we see that both $$(UV)(UW)(x,y,z) \hspace{.25in} \text{and} \hspace{.25in} (UW)(UV)(x,y,z)$$ are the addition of $(z-x)+(z-y)$ to each component of $(x,y,z)$.
\item
Both $U$ and $V$ have order 2, so $U^{-1}(UV)U=VU=(UV)^{-1}$. Similarly, $U^{-1}(UW)U=WU=(UW)^{-1}$.
\end{enumerate}
\item
Since $U$, $V$, and $W$ each have order 2, any element of $\langle U, V, W \rangle$ is a word in the letters $U$, $V$, and $W$ in which no two consecutive letters are the same.

We first observe that length 1 words, i.e. the generators, can be put into the form \eqref{equ:normal_form}. Clearly $U$ can, and we use $V=U(UV)$ and $W=U(UW)$ for the other two generators.

We next describe how to transform a word of length 2 or more into the form \eqref{equ:normal_form} by converting two letters at a time (starting on the far right) into products of powers of $UV$ and $UW$. Consider a word in $U$, $V$, and $W$ in which no two consecutive letters are the same and in which there are 2 or more letters. If the 2 far right letters are $UV$ or $UW$, then we leave them as is. If they are $VU$ or $WU$ then we replace them by $(UV)^{11}$ or $(UW)^{11}$ respectively. If they are $VW$, then we rewrite $VW$ as
$$VW=(VU)(UW)=(UV)^{11}(UW).$$
Similarly, if they are $WV$, we rewrite as $(UW)^{11}(UV)$. Thus, in all of the possible cases, we have rewritten the two far right letters of the word as a product of powers of $UV$ and $UW$.

We similarly treat the third and fourth letters from the right, and so on, moving pairwise from right to left, until either no letters are left, or only one letter remains. If the remaining far left letter is $U$, then we are done. If the remaining far left letter is $V$, then we rewrite it as $U(UV)$. If the remaining far left letter is $W$, then we rewrite it as $U(UW)$.

We have now achieved $U^k$ followed by products of powers of $UV$ and $UW$. Finally we use the facts that $UV$ and $UW$ commute and have order 12 to move the $UV$'s left towards $U^k$ and the $UW$'s right, and bring the resulting word into the form \eqref{equ:normal_form}.

Next is uniqueness of the decomposition \eqref{equ:normal_form}. We claim $\langle UV \rangle \cap \langle UW \rangle=\{\text{Id}\}$. From equation \eqref{equ:UVi} we know $(UV)^m(x,y,z)$ is the addition of $m(z-x)$ in each component, and from \eqref{equ:UWj} we know $(UW)^n(x,y,z)$ is the addition of $n(z-y)$ in each component. To distinguish $(UV)^m$ and $(UW)^m$ we evaluate on $(1,2,3)$. To distinguish $(UV)^m$ and $(UW)^n$ for $m \neq n$ with $0 \leq m,n \leq 11$, we evaluate on $(1,1,2)$. Hence, $\langle UV \rangle \cap \langle UW \rangle=\{\text{Id}\}$, and as a consequence of the commutativity of $UV$ and $UW$, we see $\langle UV, UW\rangle$ is an internal direct product of $\langle UV \rangle$ and $\langle UW \rangle$, and isomorphic to $\mathbb{Z}_{12} \times \mathbb{Z}_{12}$.

We also claim $U \notin \langle UV, UW\rangle$. From the relations we already know, the only elements of order 2 in $\langle UV, UW\rangle$ are $(UV)^6$, $(UW)^6$, and $(UV)^6(UW)^6$. We can distinguish all these from $U$ on $(0,0,1)$ using \eqref{equ:UVi}, \eqref{equ:UWj}, and the proof of \ref{UVUW}.
$$U(0,0,1)=(0,0,-1) \hspace{.4in} (UV)^6(0,0,1)=(6,6,7) \hspace{.4in} (UW)^6(0,0,1)=(6,6,7)$$
$$(UV)^6(UW)^6(0,0,1)=(0,0,1)$$

For the uniqueness, suppose $U^k(UV)^m(UW)^n=U^p(UV)^q(UW)^r$ for some $k,p$ equal to 0 or 1 and some $m,n,q,r$ equal to $0, \dots,$ or $11$.
Then $U^{k-p}=(UV)^{q-m}(UW)^{r-n} \in \langle UV, UW\rangle$ and
$$U^{k-p}=\text{Id}=(UV)^{q-m}(UW)^{r-n},$$
so that $k=p$, $m=q$, and $n=r$.
\item
Immediately follows from \ref{thm:structure_of_J:semi-direct_product_form}.
\item
The 144-element group $\langle UV, UW\rangle$ has index 2 in $\calj$, so is normal. From the unique decomposition \eqref{equ:normal_form} we have $\langle U \rangle \cap \langle UV, UW\rangle = \{\text{Id}\}$ and $\calj= \langle U \rangle \langle UV, UW\rangle$ as sets. Finally, $\calj=\langle U \rangle \ltimes \langle UV, UW\rangle$ as groups. The conjugation action of $U$ on $\langle UV, UW\rangle$ is inversion by \eqref{U-conjugation_is_inversion}.
\item
The first equation follows from \eqref{equ:UVi} and \eqref{equ:UWj}. The second equation is an application of $U$ to the first equation, using linearity and $U(c,c,c)=(c,c,c)$.
\item
From the commutativity of $UV$ and $UW$ and \ref{item:action_of_normal_form}, we have
$$\aligned
(VW)^j(x,y,z)&=(VUUW)^j(x,y,z)\\
&=(UV)^{-j}(UW)^j(x,y,z)\\
&=\big(x,\;y,\;z\big)-j(z-x)+j(z-y)\\
&=\big(x,\;y,\;z\big)+j(x-z).
\endaligned$$
\end{enumerate}
\end{proof}

\begin{cor}
The group $\calj$ has a presentation of the form
\begin{equation} \label{equ:presentation_of_J}
\langle a,b,c \; \vert\; a^2,b^2,c^2,(abc)^2,(ab)^{12},(ac)^{12} \rangle.
\end{equation}
\end{cor}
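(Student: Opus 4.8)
The plan is the standard two-step verification of a presentation: exhibit a surjection from the abstractly presented group onto $\calj$, and then bound the order of the abstract group above by $288 = |\calj|$.

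First I would set $G = \langle a,b,c \mid a^2,b^2,c^2,(abc)^2,(ab)^{12},(ac)^{12}\rangle$. By Theorem~\ref{thm:structure_of_J}~(i) the elements $U,V,W \in \calj$ satisfy \emph{all} six defining relators ($U$, $V$, $W$, and $UVW$ have order $2$; $UV$ and $UW$ have order $12$), so by von Dyck's theorem the assignment $a\mapsto U$, $b\mapsto V$, $c\mapsto W$ extends to a group homomorphism $\phi\colon G \to \calj$, which is surjective because $U,V,W$ generate $\calj$. Since $|\calj|=288$ by Theorem~\ref{thm:structure_of_J}, it now suffices to prove $|G|\le 288$, for then $\phi$ is forced to be an isomorphism.

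To bound $|G|$ I would re-run, purely formally, the normal-form argument from the proof of Theorem~\ref{thm:structure_of_J}~(ii), working only from the relators. Put $N=\langle ab,ac\rangle\le G$. From $(abc)^2=e$ and $a^2=b^2=c^2=e$ one gets $abc=(abc)^{-1}=cba$; multiplying this on the left by $c$ gives $cabc=ba$, and then on the right by $c$ gives $cab=bac$. These few identities are enough: $(ab)(ac)=abac=a(bac)=a(cab)=acab=(ac)(ab)$, so $ab$ and $ac$ commute; and each generator normalizes $N$, since $a(ab)a=ba=(ab)^{-1}$, $a(ac)a=ca=(ac)^{-1}$, $b(ab)b=ba=(ab)^{-1}$, $b(ac)b=bacb=(cab)b=ca=(ac)^{-1}$, $c(ab)c=cabc=ba=(ab)^{-1}$, and $c(ac)c=ca=(ac)^{-1}$. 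Hence $N\trianglelefteq G$. Because $ab$ and $ac$ commute and each has order dividing $12$, $N$ is a quotient of $\mathbb{Z}_{12}\times\mathbb{Z}_{12}$, so $|N|\le 144$; and in $G/N$ the images of $a,b,c$ all coincide (from $\overline{ab}=\overline{ac}=e$ we get $\bar b=\bar a^{-1}=\bar a=\bar c$) and square to $e$, so $|G/N|\le 2$. Therefore $|G|=|N|\cdot|G/N|\le 288$, completing the argument.

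I do not expect a genuine obstacle here: everything of substance has already been done in Theorem~\ref{thm:structure_of_J}, which identifies $\calj$ as $\langle U\rangle\ltimes\langle UV,UW\rangle$ with $\langle UV,UW\rangle\cong\mathbb{Z}_{12}\times\mathbb{Z}_{12}$. The only real work is the short word calculation that squeezes the needed commutativity and normality out of the single ``long'' relator $(abc)^2$ — essentially a formal shadow of the rewriting $VW=(UV)^{11}(UW)$, $WV=(UW)^{11}(UV)$ used in the proof of Theorem~\ref{thm:structure_of_J}~(ii) — after which the order count is immediate.
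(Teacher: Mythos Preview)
Your proposal is correct and follows essentially the same approach as the paper: both produce the surjection $G\to\calj$ from Theorem~\ref{thm:structure_of_J}(i), then bound $|G|\le 288$ by extracting from $(abc)^2=e$ the commutativity of $ab$ and $ac$ and the semi-direct product structure $\langle a\rangle\ltimes\langle ab,ac\rangle$. The only cosmetic difference is packaging---the paper phrases the upper bound as a surjection $\mathbb{Z}_2\ltimes(\mathbb{Z}_{12}\times\mathbb{Z}_{12})\twoheadrightarrow G$, while you compute $|G|=|N|\cdot|G/N|\le 144\cdot 2$ directly---but the underlying word calculation is the same.
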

\begin{proof}
By Theorem~\ref{thm:structure_of_J}~\ref{thm:structure_of_J:U_V_W_have_order_2}, \ref{thm:structure_of_J:UV_and_UW_have_order_12}, \ref{thm:structure_of_J:UVW_has_order_2}, the generators $U$, $V$, and $W$ of $\calj$ satisfy the relations indicated in \eqref{equ:presentation_of_J}. Also, $\calj$ has order 288.

Thus, since any group presented by \eqref{equ:presentation_of_J} surjects onto any other group satisfying the indicated relations (and perhaps more), it suffices to show that any group with presentation \eqref{equ:presentation_of_J} can have at most 288 elements. We do this by showing that the group \eqref{equ:presentation_of_J} also provides generators that satisfy the relations of the 288-element group $\mathbb{Z}_2 \ltimes (\mathbb{Z}_{12} \times \mathbb{Z}_{12})$ where $\mathbb{Z}_2$ acts by additive inversion.\footnote{Recall that a presentation of semi-direct product is given in terms of presentations of the constituent groups and the action, so we know the presentation of $\mathbb{Z}_2 \ltimes (\mathbb{Z}_{12} \times \mathbb{Z}_{12})$.} We suppose \eqref{equ:presentation_of_J} and claim that $a$, $ab$, and $ac$ satisfy the relations of $\mathbb{Z}_2 \ltimes (\mathbb{Z}_{12} \times \mathbb{Z}_{12})$. We already have $a^2=1$, and $(ab)^{12}=1=(ac)^{12}$. The commutativity of $ab$ and $ac$ follows from
$$1=(abc)(abc)=(ab)(ca)(ba)(ac)=(ab)(ac)^{-1}(ab)^{-1}(ac)$$
$$(ab)^{-1}(ac)^{-1}=(ac)^{-1}(ab)^{-1}$$
$$(ac)(ab)=(ab)(ac).$$
The semi-direct product action relation also follows from the assumption that $a$, $b$, and $c$ have order 2.
$$a^{-1}(ab)a=ba=(ab)^{-1}$$
$$a^{-1}(ac)a=ca=(ac)^{-1}$$
Thus we have surjective group homomorphisms
$$\xymatrix{\mathbb{Z}_2 \ltimes (\mathbb{Z}_{12} \times \mathbb{Z}_{12}) \ar[r] & \eqref{equ:presentation_of_J} \ar[r] & \calj}$$ where the first and last group have order 288. Therefore they must be isomorphisms.
\end{proof}

\begin{rmk}[$\calj \ncong \calu \ncong \langle PLR, T/I\rangle$]
Although both $\calj$ and $\calu$ have order 288 and are semi-direct products of $\mathbb{Z}_2$ and $\mathbb{Z}_{12}\times \mathbb{Z}_{12}$, they are not isomorphic, since the $\mathbb{Z}_2$ action is different. In $\calj$, the $\mathbb{Z}_2$ action is additive inversion on $\mathbb{Z}_{12}\times \mathbb{Z}_{12}$, while in $\calu$, the $\mathbb{Z}_2$ action exchanges the two copies of $\mathbb{Z}_{12}$. The group generated by the union of the $T/I$-group and the $PLR$-group also has order 288, but is isomorphic to neither $\calj$ nor $\calu$.
\end{rmk}

\begin{rmk}[Matrix Representation of Normal Form] \label{rem:matrix_representation_of_J_normal_form}
Evaluation of Theorem~\ref{thm:structure_of_J}~\ref{item:action_of_normal_form} on the standard basis yields the columns of the matrix representations of the elements of $\calj$ in the normal form of Theorem~\ref{thm:structure_of_J}~\ref{thm:structure_of_J:semi-direct_product_form}.
$$(UV)^m(UW)^n= \tiny  \left( \begin{array}{ccc} {1-m} & {-n} & {m+n} \\ {-m} & {1-n} & {m+n} \\ {-m} & {-n} &{1+m+n}\end{array} \right ), \;\; U(UV)^m(UW)^n=  \tiny  \left( \begin{array}{ccc} -m & 1-n & m+n \\ 1-m & -n & m+n \\ 1-m & 1-n  & -1+m+n \end{array} \right )$$
\end{rmk}

\begin{rmk}[Index of $\calj$ in $GL(3,\mathbb{Z}_{12})$]
The size of $GL(3,\mathbb{Z}_{12})$ is $$|GL(3,\mathbb{Z}_{12})|=|GL(3,\mathbb{Z}_{3})| \cdot |GL(3,\mathbb{Z}_{4})|.$$
The first factor has order
$$|GL(3,\mathbb{Z}_{3})|=(3^3-1)(3^3-3)(3^3-3^2)=2^5\cdot3^3\cdot13=11,232.$$
For the second factor, we use Theorem~1 of Hong--You \cite{HongYou} and take $p=2$, $\alpha=2$, $\beta=1$, and $n=3$ to obtain
$$\aligned
|GL(3,\mathbb{Z}_{4})|&=p^{n^2 \alpha}\left(1-\frac{1}{p^\beta} \right)\cdots \left(1-\frac{1}{p^{n\beta}} \right)\\
&=2^{3^2 \cdot 2}\left(1-\frac{1}{2} \right)\left(1-\frac{1}{2^2} \right)\left(1-\frac{1}{2^3} \right) \\
&=86,016.
\endaligned$$
Consequently, the 288-element group $\mathcal{J}$ has index $$3,354,624=\frac{11,232 \cdot 86,016}{288}$$
in $GL(3,\mathbb{Z}_{12})$.
\end{rmk}

\begin{rmk}[Index of $\calj$ in $SL(3,\mathbb{Z}_{12})$]
In general, the orbits of $SL(m,\mathbb{Z}_n)$ acting naturally on the Cartesian product $\mathbb{Z}_n^{\times m}$ were determined by Novotn\'{y}--Hrivn\'{a}k in \cite{NovotnyHrivnak}, and the order of $SL(m,\mathbb{Z}_n)$ was determined by Hong--You in \cite{HongYou}, see also \cite{FengOrdersOfClassicalGroups}. Using the formula of Hong--You, recalled in equation (2.3) of \cite{NovotnyHrivnak}, the order of the present group $SL(3,\mathbb{Z}_{12})$ is
$$241,532,928 = 12^8 \times \frac{3}{4} \times \frac{7}{8} \times \frac{8}{9} \times \frac{26}{27} =  12^8 \times  \left(1-\frac{1}{2^2}\right) \left(1-\frac{1}{2^3}\right) \left(1-\frac{1}{3^2}\right)\left(1-\frac{1}{3^3}\right) $$
Here we have taken $r = 2$ and $p_1 = 2$, $p_2 = 3$, and $12 = 2^2 \times 3^1$ in equation (2.3) of \cite{NovotnyHrivnak}.
Consequently, the 288-element group $\mathcal{J}$ has index 838,\,656 in $SL(3,\mathbb{Z}_{12})$.
\end{rmk}

\subsection{The Center of $\calj$ is a Klein 4-Group} \leavevmode \smallskip

We can now use Structure Theorem~\ref{thm:structure_of_J} to determine the center of $\calj$.

\begin{prop} \label{prop:Center_of_J}
The center of $\calj$ is the Klein 4-group $$\big\{\text{\rm Id},\; (UV)^6,\; (UW)^6,\; (UV)^6(UW)^6\big\}.$$
\end{prop}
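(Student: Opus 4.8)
The plan is to use the normal form from Theorem~\ref{thm:structure_of_J}~\ref{thm:structure_of_J:semi-direct_product_form}, which writes every element of $\calj$ uniquely as $U^k(UV)^m(UW)^n$ with $k\in\{0,1\}$ and $m,n\in\{0,\dots,11\}$, and the fact that $\calj=\langle U\rangle\ltimes\langle UV,UW\rangle$ with $\langle UV,UW\rangle\cong\mathbb{Z}_{12}\times\mathbb{Z}_{12}$ abelian. First I would observe that the four listed elements lie in $\langle UV,UW\rangle$ and are precisely the elements of order dividing $2$ there, as was already noted inside the proof of Theorem~\ref{thm:structure_of_J}. Since $\langle UV,UW\rangle$ is abelian it commutes with itself, and since $U$-conjugation acts on $\langle UV,UW\rangle$ by additive inversion (part~\ref{U-conjugation_is_inversion}), an element $(UV)^a(UW)^b$ is fixed by $U$-conjugation if and only if $-(a,b)=(a,b)$ in $\mathbb{Z}_{12}\times\mathbb{Z}_{12}$, i.e. $2a\equiv 2b\equiv 0$, i.e. $a,b\in\{0,6\}$. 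This gives exactly the four candidates, so they are central (they commute with the generators $U$, $UV$, $UW$, hence with all of $\calj$).

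Next I would prove there is nothing else in the center. Take an arbitrary $g=U^k(UV)^m(UW)^n$ in normal form and impose that it commutes with $U$, $V$, and $W$; since these three generate $\calj$, central elements are exactly those commuting with all three. The cleanest route: first use commutativity with $U$. If $k=1$, then $g=U(UV)^m(UW)^n$; conjugating by $U$ and using part~\ref{U-conjugation_is_inversion} together with $U^2=\id$ shows $U^{-1}gU = U(UV)^{-m}(UW)^{-n}$, and by uniqueness of the normal form $g$ is central under $U$ only if $(m,n)\equiv(-m,-n)$, which as above forces $m,n\in\{0,6\}$; but then I must rule these out by testing against $V$ (or $W$). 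Concretely, one evaluates $gV$ and $Vg$ on a suitably generic vector such as $(1,2,3)$ using the explicit action formulas of part~\ref{item:action_of_normal_form} and the matrix for $U$, and checks they disagree — so no element with $k=1$ is central. If instead $k=0$, then $g=(UV)^m(UW)^n\in\langle UV,UW\rangle$, and commutativity with $U$ alone already forces $m,n\in\{0,6\}$ by the inversion computation above, yielding precisely the four listed elements; one then checks directly that each of these four genuinely commutes with $V$ and $W$ as well, e.g. again by evaluating on the standard basis using part~\ref{item:action_of_normal_form}, or by noting $(UV)^6$ and $(UW)^6$ are each $+6=T_6$ applied componentwise (from \eqref{equ:UVi}, \eqref{equ:UWj}), which is central in all of $GL(3,\mathbb{Z}_{12})$ since $6\mathbf 1$ is fixed by every linear map iff \dots\ — actually more simply, the scalar $7\cdot\id$ is central, and $(UV)^6(x,y,z)=(x,y,z)+6$ is not scalar, so one should just verify $(UV)^6 V = V(UV)^6$ by the translation description: $V$ is linear, so $V((x,y,z)+6)=V(x,y,z)+V(6,6,6)=V(x,y,z)+6$ since $V$ fixes constant vectors, giving the commutation immediately, and likewise for $W$.

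Assembling these, the center is exactly $\{\id,(UV)^6,(UW)^6,(UV)^6(UW)^6\}$, and since each nonidentity element has order $2$ by part~\ref{thm:structure_of_J:UV_and_UW_have_order_12} (the $6$th power of an order-$12$ element), this is a Klein four-group. I expect the main obstacle to be the $k=1$ case: one cannot dispose of it purely from commutativity with $U$ (that only cuts $m,n$ down to $\{0,6\}$), so one genuinely needs a second, slightly fiddly verification against $V$ or $W$ on a concrete vector to see those four remaining $k=1$ elements fail to be central. Everything else is bookkeeping with the normal form and the translation-action formulas already established, plus the elementary group-theoretic remark that in a semidirect product $\langle U\rangle\ltimes A$ with $A$ abelian, the center consists of the $U$-conjugation-fixed points of $A$ together with possibly some coset-$U$ elements, the latter of which we rule out.
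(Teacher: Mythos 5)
Your overall strategy coincides with the paper's for the mode-preserving coset: both arguments observe that $A=(UV)^m(UW)^n$ already commutes with the abelian subgroup $\langle UV,UW\rangle$, so centrality reduces to commuting with $U$, and by Theorem~\ref{thm:structure_of_J}~\ref{U-conjugation_is_inversion} this forces $A=A^{-1}$, i.e.\ $m,n\in\{0,6\}$; your remark that commuting with $U$, $UV$, $UW$ suffices (since $V=U(UV)$ and $W=U(UW)$) is precisely what makes any further testing against $V$ and $W$ unnecessary. Where you genuinely diverge is the mode-reversing coset: you first cut $U(UV)^m(UW)^n$ down to $m,n\in\{0,6\}$ and then propose to eliminate the surviving four elements by evaluating $gV$ and $Vg$ on a vector such as $(1,2,3)$. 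That plan does succeed (all four fail against $V$ on that vector), but it is left as an asserted computation, and the paper handles the entire coset uniformly with a one-line conjugation: if $UA$ commuted with $UV$, then $UV=(UA)(UV)(UA)^{-1}=U\big(A(UV)A^{-1}\big)U^{-1}=U(UV)U^{-1}=(UV)^{-1}$, contradicting that $UV$ has order $12$ --- no case analysis or evaluation required. What the paper's route buys is exactly the avoidance of the ``slightly fiddly verification'' you flag as the main obstacle.

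One factual slip in your (redundant) aside should be corrected: $(UV)^6$ is \emph{not} componentwise translation by $6$; by \eqref{equ:UVi} it is translation by $6(z-x)$, which is $0$ or $6$ according to the parity of $z-x$, so the ``translation description'' argument for $(UV)^6V=V(UV)^6$ rests on a false premise. The commutation itself is true --- for instance $V(UV)^6V^{-1}=\big(V(UV)V^{-1}\big)^6=(UV)^{-6}=(UV)^6$ --- but, as your first paragraph already establishes, no such direct check of $V$ and $W$ is needed, so this error does not affect the validity of your main line of argument.
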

\begin{proof}
Let $A \in \calj$ be in the abelian subgroup $\langle UV, UW \rangle \cong \mathbb{Z}_{12} \times \mathbb{Z}_{12}$. Then $A$ is in the center of $\calj$ if and only if $A$ commutes with $U$ (recall the normal form in Theorem~\ref{thm:structure_of_J}~\ref{thm:structure_of_J:semi-direct_product_form}). But $A$ commutes with $U$ if and only if $$A=U^{-1}AU=A^{-1},$$
where the last equality follows from Theorem~\ref{thm:structure_of_J}~\ref{U-conjugation_is_inversion}. Hence $A$ must have order 1 or 2 to be in the center. The only elements of order 1 or 2 in $\langle UV, UW \rangle$ are $\text{\rm Id}$, $(UV)^6$, $(UW)^6$, and $(UV)^6(UW)^6$.

Let $A \in \calj$ be in the abelian subgroup $\langle UV, UW \rangle$ (not necessarily in the center), and consider $UA$. We show that $UA$ cannot be in the center by contradiction. Suppose $UA$ commutes with $UV$. From Theorem~\ref{thm:structure_of_J}~\ref{U-conjugation_is_inversion} again we have
$$UV=(UA)(UV)(UA)^{-1}=U(A(UV)A^{-1})U^{-1}=U(UV)U^{-1}=U^{-1}(UV)U=(UV)^{-1}.$$
But $UV=(UV)^{-1}$ is impossible, as $UV$ has order 12. Hence $UA$ does not commute with $UV$, and $UA$ cannot be in the center.

We have now considered all elements of $\calj$ because of the normal form in Theorem~\ref{thm:structure_of_J}~\ref{thm:structure_of_J:semi-direct_product_form}.
\end{proof}

\subsection{The Centralizer of $\mathcal{J}$ in $GL(3, \mathbb{Z}_{12})$ is a Product of Klein 4-Groups} \label{subsec:JCentralizer_in_GL} \leavevmode \smallskip

In Proposition~\ref{prop:Center_of_J} we found the center of $\calj$ to be a Klein 4-group. Other elements of $GL(3, \mathbb{Z}_{12})$ that commute with $\calj$ are of course scalar multiplication with the units 1, 5, 7, 11 of $\mathbb{Z}_{12}$, that is, the four diagonal matrices with a single unit in all diagonal entries commute with $\calj$. These four matrices also form a Klein 4-group, as 5, 7, and 11 all have multiplicative order 2. The center of $\calj$ and these four matrices generate an internal direct product of two Klein 4-groups that commutes with $\calj$. We claim that the centralizer of $\calj$ in $GL(3, \mathbb{Z}_{12})$ consists of precisely these 16 matrices, and no more.

Further, we determine all not-necessarily-invertible matrices in $M(3, \mathbb{Z}_{12})$ that commute with $\calj$.

\begin{prop} \label{prop:Centralizer_of_J_in_GL3}
The group centralizer of $\calj$ in $GL(3, \mathbb{Z}_{12})$ consists of the following 16 matrices with $u=1,5,7,11.$
$$\text{\rm diag}(u)=\left( \begin{array}{ccc} u & 0 & 0 \\ 0 & u & 0 \\ 0 & 0 & u \end{array} \right )\hspace{.9in}\text{\rm diag}(u)\cdot(UV)^6(UW)^6=\left( \begin{array}{ccc} {u+6} & 6 & 0 \\ 6 & {u+6} & 0 \\ 6 & 6 & {u} \end{array} \right)
$$
$$\text{\rm diag}(u)\cdot(UV)^6=\left( \begin{array}{ccc} {u+6} & 0 & 6 \\ 6 & u & 6 \\ 6 & 0 & {u+6} \end{array} \right) \hspace{.25in}\text{\rm diag}(u)\cdot(UW)^6=\left( \begin{array}{ccc} u & 6 & 6 \\ 0 & {u+6} & 6 \\ 0 & 6 & {u+6} \end{array} \right) $$
This abelian group is an internal direct product of the Klein 4-group of the indicated diagonal matrices and the Klein 4-group of the center of $\calj$ from Proposition~\ref{prop:Center_of_J}.

The monoid centralizer of $\calj$ in the monoid of all matrices $M(3, \mathbb{Z}_{12})$ consists of the 30 matrices
with arbitrary $u \in \mathbb{Z}_{12}$ given by
$$\aligned
&\text{\rm diag}(u)    \hspace{1in}    &                 \text{\rm diag}(u)\cdot(UV)^6(UW)^6 \phantom{.} \\
&\text{\rm diag}(u)\cdot(UV)^6  \hspace{1in}     & \text{\rm diag}(u)\cdot(UW)^6.
\endaligned $$
When $u \in \mathbb{Z}_{12}$ is odd, these matrices take the form indicated above for $u$ invertible.
When $u \in \mathbb{Z}_{12}$ is even, these four matrices coincide and are all simply $\text{\rm diag}(u)$.

\end{prop}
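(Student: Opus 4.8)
The plan is to reduce the problem to the three generators and then to solve a small linear system over $\mathbb{Z}_{12}$, keeping in mind that $\mathbb{Z}_{12}$ is not a field; the zero divisors of $\mathbb{Z}_{12}$ are exactly what account for the entries equal to $6$ in the answer. Since $\calj=\langle U,V,W\rangle=\langle U,UV,UW\rangle$, a matrix $M\in M(3,\mathbb{Z}_{12})$ centralizes $\calj$ if and only if it commutes with $U$, $UV$, and $UW$. The structural input is Theorem~\ref{thm:structure_of_J}~\ref{item:action_of_normal_form}: $UV$ adds $z-x$ to every coordinate and $UW$ adds $z-y$, so writing $\mathbf{1}=(1,1,1)^{T}$, $\phi=(-1,0,1)^{T}$, $\psi=(0,-1,1)^{T}$, we have $UV=I+\mathbf{1}\phi^{T}$ and $UW=I+\mathbf{1}\psi^{T}$; also every element of $\calj$ fixes $\mathbf{1}$, since $U\mathbf{1}=V\mathbf{1}=W\mathbf{1}=\mathbf{1}$.

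First I would deal with the two rank-one perturbations $UV$ and $UW$. The relation $M(I+\mathbf{1}\phi^{T})=(I+\mathbf{1}\phi^{T})M$ rearranges to $(M\mathbf{1})\,\phi^{T}=\mathbf{1}\,(M^{T}\phi)^{T}$; comparing the column in which $\phi$ has a unit entry forces $M\mathbf{1}=c\mathbf{1}$ and then $M^{T}\phi=c\phi$ for a single scalar $c\in\mathbb{Z}_{12}$, and the identical computation for $\psi$ returns the same $c$ together with $M^{T}\psi=c\psi$. Since $\phi$ and $\psi$ generate the rank-two submodule $N_{0}=\{v\in\mathbb{Z}_{12}^{3}:v_{1}+v_{2}+v_{3}=0\}$ and $\mathbb{Z}_{12}^{3}/N_{0}$ is free of rank one, the condition that $M^{T}-cI$ annihilate $N_{0}$ forces $M^{T}=cI+w\mathbf{1}^{T}$, i.e.\ $M=cI+\mathbf{1}w^{T}$ for some $w\in\mathbb{Z}_{12}^{3}$; feeding this back into $M\mathbf{1}=c\mathbf{1}$ gives $w\in N_{0}$. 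Hence commuting with $\langle UV,UW\rangle$ means exactly $M=cI+\mathbf{1}w^{T}$ with $c\in\mathbb{Z}_{12}$ and $w\in N_{0}$.

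Next I would impose commutation with $U$. Because $U\mathbf{1}=\mathbf{1}$, the equation $MU=UM$ collapses — after cancelling the factor $\mathbf{1}$, whose first entry is a unit — to $U^{T}w=w$. Solving $U^{T}w=w$ over $\mathbb{Z}_{12}$ is where the zero divisors intervene: the third coordinate reads $-w_{3}=w_{3}$, so $w_{3}\in\{0,6\}$, and combining this with $w\in N_{0}$ leaves precisely $w\in\{(0,0,0),(0,6,6),(6,6,0),(6,0,6)\}$; equivalently, $I+\mathbf{1}w^{T}$ is one of the four central elements $\mathrm{Id},(UV)^{6},(UW)^{6},(UV)^{6}(UW)^{6}$ of Proposition~\ref{prop:Center_of_J}. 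So the monoid centralizer consists of the matrices $cI+\mathbf{1}w^{T}$ with $c\in\mathbb{Z}_{12}$ and $w$ among those four vectors, and it remains to sort these into the tabulated list (keeping track of which coincide, using $6c\equiv 6$ for $c$ odd and $6c\equiv 0$ for $c$ even). For the group centralizer one needs in addition that $M$ be invertible; since $\det(cI+\mathbf{1}w^{T})=c^{3}$ (direct expansion, or the matrix determinant lemma together with $w^{T}\mathbf{1}=0$), this holds exactly when $c$ is a unit, i.e.\ $c\in\{1,5,7,11\}$, giving the sixteen matrices $\diag(u)$ and $\diag(u)\cdot z$ with $z\in Z(\calj)$. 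The remaining assertions then follow: $Z(\calj)$ is a Klein four-group by Proposition~\ref{prop:Center_of_J}, the scalars $\diag(u)$ with $u\in\{1,5,7,11\}$ form a Klein four-group because $u^{2}\equiv 1$ for every unit of $\mathbb{Z}_{12}$, these two subgroups commute and meet only in $\mathrm{Id}$, and the centralizer is abelian, as one sees from $(cI+\mathbf{1}w^{T})(c'I+\mathbf{1}(w')^{T})=cc'I+\mathbf{1}(cw'+c'w)^{T}$ (again using $w^{T}\mathbf{1}=0$).

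The one real obstacle is the arithmetic bookkeeping over $\mathbb{Z}_{12}$ rather than over a field: at each step an equation such as $2t=0$ must be resolved as $t\in\{0,6\}$, not $t=0$, and the final enumeration of which matrices $cI+\mathbf{1}w^{T}$ are invertible, and which ones coincide, has to be tracked carefully. If one prefers to bypass the structural shortcut, an equivalent but longer route is to write $M=(m_{ij})$ and solve the linear system arising from $MU=UM$, $MV=VM$, $MW=WM$ entry by entry, which needs nothing more than the explicit matrices of $U$, $V$, $W$.
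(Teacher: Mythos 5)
Your route is genuinely different from the paper's, and its core is correct. The paper works entry-by-entry: it writes out selected entries of the commutators of a general matrix with $U$, $V$, $W$, $UV$, $UW$, $VW$, deduces that the off-diagonal entries lie in $\{0,6\}$ with the two off-diagonal entries of each column equal, and then runs a case analysis on which columns carry the $6$'s, finishing with a direct determinant computation $u^3$ for invertibility. You instead reduce to the generating set $U$, $UV$, $UW$, exploit the rank-one form $UV=I+\mathbf{1}\phi^{T}$, $UW=I+\mathbf{1}\psi^{T}$ coming from Theorem~\ref{thm:structure_of_J}~\ref{item:action_of_normal_form}, and arrive at the clean normal form $M=cI+\mathbf{1}w^{T}$ with $w\in N_{0}$ and $U^{T}w=w$; abelianness, the determinant $c^{3}$, and the direct-product description of the $16$ invertible elements all fall out of the single identity $w^{T}\mathbf{1}=0$. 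Each of those steps checks out (including the quotient argument using that $\mathbb{Z}_{12}^{3}/N_{0}$ is free of rank one), and as a bonus your form makes the sufficiency direction immediate, which the paper's partial-commutator computation leaves implicit.

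The problem is the closing sentence ``it remains to sort these into the tabulated list'': that sorting cannot be done, and you should not wave it through as bookkeeping. Your (correct) computation produces $12\cdot 4=48$ pairwise distinct matrices $cI+\mathbf{1}w^{T}$ in the monoid centralizer, whereas the tabulated list has only $30$: when $c$ is even and $w\neq 0$, the matrix $cI+\mathbf{1}w^{T}$ is not of the form $\mathrm{diag}(u)\cdot Z$ with $Z$ central, because $\mathrm{diag}(u)\cdot(UV)^{6}$, $\mathrm{diag}(u)\cdot(UW)^{6}$, $\mathrm{diag}(u)\cdot(UV)^{6}(UW)^{6}$ all collapse to $\mathrm{diag}(u)$ for even $u$. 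A concrete instance is the matrix with every row equal to $(6,0,6)$, i.e.\ $c=0$, $w=(6,0,6)^{T}$: it commutes with $U$, $V$, $W$ (direct check), yet it is neither diagonal nor of the odd-$u$ shape, so it is none of the listed $30$. In other words, your method actually shows the monoid part of the statement undercounts the centralizer; the corresponding slip in the paper's own proof is its final identification of the case families (whose diagonal parameter is unconstrained) with products $\mathrm{diag}(u)\cdot Z$, which is valid only when that parameter is odd. The group-centralizer part is unaffected, since units of $\mathbb{Z}_{12}$ are odd, and there your argument fully agrees with the proposition. So either record the monoid centralizer as the $48$ matrices $cI+\mathbf{1}w^{T}$, or state the discrepancy explicitly instead of deferring it to the ``sorting'' step.
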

\begin{proof}
Scalar multiplication and the center of $\calj$ clearly commute with all of $\calj$, so the indicated 16 elements are in the group centralizer of $\calj$, and the indicated 30 elements are in the monoid centralizer of $\calj$.

We first confirm that the indicated group elements have the claimed matrix forms. From  Remark~\ref{rem:matrix_representation_of_J_normal_form} we directly compute
\begin{equation} \label{equ:UV6}
(UV)^6=\left(\begin{array}{ccc} 7 & 0 & 6 \\ 6 & 1 & 6 \\ 6 & 0 & 7 \end{array} \right)
\end{equation}
and directly compute the matrices $(UW)^6$ and $(UV)^6(UW)^6$, and see that $(UW)^6$ and $(UV)^6(UW)^6$ are cyclic permutations of \eqref{equ:UV6} (i.e. both rows and columns are simultaneously cyclically permuted). Considering the effect of multiplying $u$ on entries of \eqref{equ:UV6}, if $u$ is odd, we have
$$u6=(2k+1)6=0+6=6$$
$$u7=u(1+6)=u+6,$$
and we obtain the claimed matrix form for $\text{\rm diag}(u)\cdot(UV)^6$ for $u$ odd. The claimed forms for $\text{\rm diag}(u)(UW)^6$ and $\text{\rm diag}(u)(UV)^6(UW)^6$ in the case $u$ odd follows similarly.

If $u$ is even, then
$$u6=0$$
$$u7=u(1+6)=u$$
and $\text{\rm diag}(u)\cdot(UV)^6=\text{\rm diag}(u)$, again using \eqref{equ:UV6}. Similarly,  $\text{\rm diag}(u)(UW)^6$ and $\text{\rm diag}(u)(UV)^6(UW)^6$ are just $\text{\rm diag}(u)$.

Next we show that no other matrices are in the monoid centralizer. We suppose $A\in M(3, \mathbb{Z}_{12})$ commutes with $\calj$, and then compute {\it only part} of the commutators with $U$, $V$, $W$, $UV$, $UW$, and $VW$. Since the commutators are zero, we have a family of equations which the entries of $A$ must satisfy, and these determine $A$. To avoid unnecessary computation, we only compute some of the commutator with $U$, and then only single entries of the other commutators that arise from a row/column {\it with a single $\pm 1$} in $V$, $W$, $UV$, $UW$, and $VW$. We use a $\ast$ to indicate entries we do not compute. Recall the matrix representations in Section~\ref{subsec:Definition_of_J} and Remark~\ref{rem:matrix_representation_of_J_normal_form}.
\begin{align}
AU-UA& =\left( \begin{array}{ccc} a & b & c \\ d & e & f \\ g & h & i \end{array} \right)\left( \begin{array}{ccc} 0 & 1 & 0 \\ 1 & 0 & 0 \\ 1 & 1 & -1 \end{array} \right)-\left( \begin{array}{ccc} 0 & 1 & 0 \\ 1 & 0 & 0 \\ 1 & 1 & -1 \end{array} \right) \left( \begin{array}{ccc} a & b & c \\ d & e & f \\ g & h & i \end{array} \right) \\
&= \left( \begin{array}{ccc} b+c-d & * & -c-f \\ e+f-a & * & * \\ h+i-(a+d-g) & * & * \end{array} \right)=\mathbf{0} \label{equ:centralizer_AU}
\end{align}

\begin{equation} \label{equ:centralizer_AV}
AV-VA=\left( \begin{array}{ccc} * & * & * \\ -d-g & * & * \\ * & * & * \end{array} \right)=\mathbf{0}
\end{equation}

\begin{equation} \label{equ:centralizer_AW}
AW-WA=\left( \begin{array}{ccc} * & -b-h & * \\ * & * & * \\ * & * & * \end{array} \right)=\mathbf{0}
\end{equation}

\begin{equation} \label{equ:centralizer_A(UV)}
A(UV)-(UV)A=\left( \begin{array}{ccc} * & b-h & * \\ * & * & * \\ * & * & * \end{array} \right)=\mathbf{0}
\end{equation}

\begin{equation}  \label{equ:centralizer_A(UW)}
A(UW)-(UW)A=\left( \begin{array}{ccc} * & * & * \\ d-g & * & * \\ * & * & * \end{array} \right)=\mathbf{0}
\end{equation}

\begin{equation}  \label{equ:centralizer_A(VW)}
A(VW)-(VW)A=\left( \begin{array}{ccc} * & * & * \\ * & * & f-c \\ * & * & * \end{array} \right)=\mathbf{0}
\end{equation}

A pairwise comparison of the two-variable equations above, namely \eqref{equ:centralizer_AU} with \eqref{equ:centralizer_A(VW)}, \eqref{equ:centralizer_AV} with \eqref{equ:centralizer_A(UW)}, and \eqref{equ:centralizer_AW} with \eqref{equ:centralizer_A(UV)}, reveals that all the non-diagonal entries of $A$, specifically $b$, $c$, $d$, $f$, $g$, $h$, must be 0 or 6, and
\begin{equation} \label{equ:off_diagonal_equalities}
c=f \hspace{.75in} d=g \hspace{.75in}  b=h.
\end{equation}
Consequently, in each column of $A$ the same value occurs in both off-diagonal positions.

The equation $b+c-d=0$ in the upper left of matrix \eqref{equ:centralizer_AU} then implies that either: $b$, $c$, and $d$ are all zero, {\it or} exactly two of $b$, $c$, and $d$ are 6 and the third is 0. We make this case distinction.

\begin{enumerate}
\item \label{item:two_zero_a}
Suppose $b$, $c$, and $d$ are all zero. \\
Then $f$, $g$, and $h$ are also zero, as  $f=c$, $g=d$, and $h=b$. The bottom two equations in the left column of matrix \eqref{equ:centralizer_AU} now imply $a=e=i$, so the diagonal entries of $A$ are equal, the non-diagonal entries are zero, and $A=\text{diag}(u)$ for some $u\in \mathbb{Z}_{12}$.
\item
Suppose exactly two of $b$, $c$, and $d$ are 6 and the third is 0. \\
We go through the three possibilities, and look at the lower two equations in the far left column of matrix \eqref{equ:centralizer_AU}. \\
\begin{enumerate}
\item \label{item:two_zero_a}
Suppose $b=c=6$, and $d=0$. \\
Then $f=h=6$ and $g=0$ by \eqref{equ:off_diagonal_equalities}. Then
$$0=e+f-a \Longrightarrow e=a+6,$$
$$0=\big(h+i-a-d+g\big)=\big(6+i-a-0+0\big) \Longrightarrow i=a+6,$$
and the lower two diagonal entries $e$ and $i$ are equal, while the first diagonal entry $a$ differs from them by 6.
\item \label{item:two_zero_b}
Suppose $c=d=6$ and $b=0$. \\
Then $f=g=6$ and $h=0$ by \eqref{equ:off_diagonal_equalities}.  Then
$$0=e+f-a \Longrightarrow e=a+6,$$
$$0=\big(h+i-a-d+g\big)=\big(0+i-a-6+6\big) \Longrightarrow i=a,$$
and the corner two diagonal entries $a$ and $i$ are equal, while the middle diagonal entry $e$ differs from them by 6.
\item \label{item:two_zero_c}
Suppose $d=b=6$ and $c=0$. \\
Then $g=h=6$ and $f=0$ by \eqref{equ:off_diagonal_equalities}.  Then
$$0=e+f-a \Longrightarrow e=a,$$
$$0=\big(h+i-a-d+g\big)=\big(6+i-a-6+6\big) \Longrightarrow i=a+6,$$
and the first two diagonal entries $a$ and $e$ are equal, while the lower corner diagonal entry $i$ differs from them by 6.
\end{enumerate}
Thus, in all three cases \ref{item:two_zero_a}, \ref{item:two_zero_b}, \ref{item:two_zero_c}, we see that the two diagonal entries in the columns with 6 coincide, and the third diagonal entry differs from these two diagonal entries by the residue 6.
\end{enumerate}

By this point, we have shown that any matrix $A \in M(3, \mathbb{Z}_{12})$ in the monoid centralizer has the form of the four matrix families indicated in the statement of the proposition, so we have determined the monoid centralizer.

Our final task is to determine the invertible elements in the monoid centralizer, i.e. to prove that such a matrix is invertible if and only if $u$ is a unit. We claim that the determinant of each matrix is $u^3$. The matrix $\text{diag}(u)$ clearly has determinant $u^3$. The determinant of the other 3 matrix families is
$$u\big((u+6)^2-36\big)=u\big(u^2+12u+36 - 36\big)=u^3.$$
Thus, in all cases $u^3$ must be a unit in order for the matrix to be invertible. Every unit in $\mathbb{Z}_{12}$ has multiplicative order 2, so $u^6=1$ and $u(u^5)=1$, so $u$ is a unit for all four matrix families, and we are finished.
\end{proof}

\subsection{The Centralizer of $\calj$ in $\text{Aff}(3,\mathbb{Z}_{12})$} \label{subsec:JCentralizer_in_Aff} \leavevmode \smallskip

In \cite[Theorem 3.2]{fiorenollsatyendraSchoenberg}, we proved that $U$, $V$, and $W$ commute\footnote{Actually, we proved this commutativity in the context of any $\mathbb{Z}_m$ rather than $\mathbb{Z}_{12}$. } with the component-wise application of any affine map $\mathbb{Z}_{12} \to \mathbb{Z}_{12}$. Next we can actually determine all affine endomorphisms of $\mathbb{Z}_{12}^{\times 3}$ that commute with $U$, $V$, and $W$.

\begin{prop} \label{prop:Centralizer_of_J_in_Aff(3,Z12)}
Let $\text{\rm Aff}(3,\mathbb{Z}_{12})$ denote the monoid of affine endomorphisms of $\mathbb{Z}_{12}^{\times 3}$ and let
$\text{\rm Aff}^\times (3,\mathbb{Z}_{12})$ denote its group of invertible elements. The monoid centralizer of $\calj$ in $\text{\rm Aff}(3,\mathbb{Z}_{12})$ consists of those maps $x \mapsto Ax+(q,q,q)$ where $A$ is any of the 30 matrices in $M(3,\mathbb{Z}_{12})$ that commute with $\calj$ as determined in Proposition~\ref{prop:Centralizer_of_J_in_GL3}, and $q \in \mathbb{Z}_{12}$. In particular, the $T/I$-group commutes with $\calj$.

The centralizer of $\mathcal{J}$ in $\text{\rm Aff}^\times (3,\mathbb{Z}_{12})$ is isomorphic to the semi-direct product of its centralizer in $GL(3,\mathbb{Z}_{12})$ with $\mathbb{Z}_{12}$, where we consider $\mathbb{Z}_{12}$ as embedded into $\mathbb{Z}_{12}^{\times 3}$ via the ``diagonal'' embedding $q \mapsto (q,q,q)$. The centralizer in $GL(3,\mathbb{Z}_{12})$ was determined in Proposition~\ref{prop:Centralizer_of_J_in_GL3}.
\end{prop}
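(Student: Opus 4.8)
The plan is to reduce the whole statement to the matrix centralizer already computed in Proposition~\ref{prop:Centralizer_of_J_in_GL3} together with a short computation of the common fixed subspace of $\calj$. Write a generic affine endomorphism as $f(x)=Ax+b$ with $A\in M(3,\mathbb{Z}_{12})$ and $b\in\mathbb{Z}_{12}^{\times 3}$, and write a generator $G\in\{U,V,W\}$ as the linear map $x\mapsto Gx$ (trivial translation part). Comparing $f\circ G$ with $G\circ f$ as functions of $x$ separates into the linear part and the constant part, giving the two conditions $AG=GA$ and $Gb=b$. Since $\calj$ is generated by $U$, $V$, $W$, this shows that $f$ lies in the monoid centralizer of $\calj$ in $\text{\rm Aff}(3,\mathbb{Z}_{12})$ if and only if $A$ lies in the monoid centralizer of $\calj$ in $M(3,\mathbb{Z}_{12})$ \emph{and} $b$ is fixed by every element of $\calj$.

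Next I would pin down the common fixed subspace $\{b\in\mathbb{Z}_{12}^{\times 3}:Gb=b \text{ for all }G\in\calj\}$. Using Theorem~\ref{thm:structure_of_J}~\ref{item:action_of_normal_form} (equivalently \eqref{equ:UVi} and \eqref{equ:UWj}), a vector $b=(b_1,b_2,b_3)$ is fixed by $UV$ iff $b_3=b_1$ and is fixed by $UW$ iff $b_3=b_2$, so any common fixed vector must be diagonal, $b=(q,q,q)$; conversely $U(q,q,q)=(q,q,q)$ directly from the formula for $U$, so every diagonal vector is fixed by $U$, $V$, $W$, hence by all of $\calj$. Combining this with the reduction of the first paragraph and the list of $30$ matrices in Proposition~\ref{prop:Centralizer_of_J_in_GL3} gives exactly the claimed description of the monoid centralizer: the maps $x\mapsto Ax+(q,q,q)$ with $A$ one of those $30$ matrices and $q\in\mathbb{Z}_{12}$. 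Taking $A=\text{\rm diag}(1)$ or $A=\text{\rm diag}(11)=\text{\rm diag}(-1)$ recovers the $T/I$-group as a subset, proving the parenthetical claim.

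For the invertible part, recall that an affine self-map $x\mapsto Ax+b$ of $\mathbb{Z}_{12}^{\times 3}$ is bijective if and only if $A\in GL(3,\mathbb{Z}_{12})$ (if $A$ is invertible the inverse is $x\mapsto A^{-1}x-A^{-1}b$; conversely bijectivity of $f$ forces bijectivity of the linear part $x\mapsto Ax$, hence $\det A\in\mathbb{Z}_{12}^\times$). Thus the centralizer of $\calj$ in $\text{\rm Aff}^\times(3,\mathbb{Z}_{12})$ consists of the maps $x\mapsto Ax+(q,q,q)$ with $A$ among the $16$ invertible matrices of Proposition~\ref{prop:Centralizer_of_J_in_GL3} and $q\in\mathbb{Z}_{12}$. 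To exhibit the semi-direct product structure, let $N=\{t_q:x\mapsto x+(q,q,q)\}\cong\mathbb{Z}_{12}$ and let $H=\{x\mapsto Ax : A\in\text{centralizer of }\calj\text{ in }GL(3,\mathbb{Z}_{12})\}$, so $H$ is (isomorphic to) that centralizer. Then $N\cap H=\{\id\}$, every centralizing invertible affine map factors as $t_q\circ h$ with $h\in H$ so the centralizer equals $NH$, and $N$ is normal because conjugating $t_q$ by $x\mapsto Ax$ yields $x\mapsto x+A(q,q,q)$, which is again a diagonal translation since $\calj$ fixes the diagonal while each $\text{\rm diag}(u)$ merely scales it. Hence the centralizer in $\text{\rm Aff}^\times(3,\mathbb{Z}_{12})$ is the internal semi-direct product $H\ltimes N$, i.e. the semi-direct product of the centralizer of $\calj$ in $GL(3,\mathbb{Z}_{12})$ with $\mathbb{Z}_{12}$ embedded diagonally.

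All of these computations are short, so I do not anticipate a genuine obstacle; the only point requiring care is the bookkeeping in the equivalence ``$f$ centralizes $\calj$ $\iff$ $A$ centralizes $\calj$ and $b$ is $\calj$-fixed,'' after which Proposition~\ref{prop:Centralizer_of_J_in_GL3} does the heavy lifting and the fixed-subspace computation supplies the diagonal translation part.
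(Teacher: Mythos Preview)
Your proposal is correct and follows essentially the same approach as the paper: both split the commutation condition on $x\mapsto Ax+b$ into the linear part $AJ=JA$ (reducing to Proposition~\ref{prop:Centralizer_of_J_in_GL3}) and the fixed-vector condition $Jb=b$, and then identify the common fixed subspace as the diagonal $\{(q,q,q)\}$. The only cosmetic differences are that the paper uses homogeneous coordinates for the splitting and uses $U$, $V$ directly to pin down the fixed subspace (whereas you use $UV$, $UW$), and the paper dispatches the semi-direct product claim in one line via $\text{Aff}^\times(3,\mathbb{Z}_{12}) = GL(3,\mathbb{Z}_{12}) \rtimes \mathbb{Z}_{12}^{\times 3}$ rather than verifying normality by hand.
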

\begin{proof}
We use homogeneous coordinates to notate affine maps.

Suppose an affine endomorphism $x \mapsto Ax+b$ commutes with all $J \in \calj$. Then
$$\aligned
\left( \begin{array}{c|c} 0 & 0 \\ \hline 0 & 1\end{array} \right ) &=
\left( \begin{array}{c|c}  A & b \\ \hline 0 & 1\end{array} \right ) \cdot \left( \begin{array}{c|c}  J & 0 \\ \hline 0 & 1\end{array} \right ) -  \left( \begin{array}{c|c}  J & 0 \\ \hline 0 & 1\end{array} \right ) \cdot \left( \begin{array}{c|c}  A & b \\ \hline 0 & 1\end{array} \right ) \\
&=   \left( \begin{array}{c|c} A  J - J A & b - J b \\ \hline 0 & 1\end{array} \right )
\endaligned$$
and $AJ-JA=\mathbf{0}$ for all $J \in \calj$, so $A$ is in the monoid centralizer of $\calj$. We also have $b - Jb=\mathbf{0}$ for all $J \in \calj$, in particular $b$ is fixed by the generators $U$, $V$, and $W$. Already for $U$ and $V$, we see from
$$U \left( \begin{array}{c}b_1 \\ b_2 \\ b_3 \end{array} \right ) = \left( \begin{array}{c}b_2 \\ b_1 \\ b_1 + b_2 - b_3 \end{array} \right ) \hspace{.4in} \text{and} \hspace{.4in}
V \left( \begin{array}{c}b_1 \\ b_2 \\ b_3 \end{array} \right ) = \left( \begin{array}{c}- b_1 + b_2 + b_3 \\ b_3 \\ b_2 \end{array} \right )$$
that $b_1=b_2=b_3$.

Thus, if $x \mapsto Ax+b$ commutes with all $J \in \calj$, then $A$ is in the monoid centralizer of $\calj$ and the translation vector $b$ has the same entry in all three components. The converse is clearly also true.

Since $\text{\rm Aff}^\times (3,\mathbb{Z}_{12}) = GL(3,\mathbb{Z}_{12}) \rtimes \mathbb{Z}_{12}^{\times 3}$, the structure claim follows.
\end{proof}

\subsection{Recollection on Permutation Matrices} \label{subsec:Recollection_of_Permutations} \leavevmode \smallskip

We denote by $\Sigma_3$ the permutation group on the set $\{1,2,3\}$, and we follow the standard function orthography in which the rightmost function is done first. The notation (123) is cycle notation for the permutation $1 \mapsto 2 \mapsto 3 \mapsto 1$. For any set $X$, the left action of $\Sigma_3$ on the Cartesian product $X^3$ is
$$\sigma (x_1,x_2,x_3)=(x_{\sigma^{-1} 1},x_{\sigma^{-1} 2},x_{\sigma^{-1} 3}).$$
When $X=\mathbb{Z}_{12}$, this left action of $\Sigma_3$ on $\mathbb{Z}_{12} \times \mathbb{Z}_{12} \times \mathbb{Z}_{12}$ arises from the left action of $\Sigma_3$ on the standard column vector basis $\{e_1, e_2, e_3\}$ via
$$\sigma e_i = e_{\sigma i}.$$
In this way, the $3 \times 3$ matrix $P_\sigma$ corresponding to $\sigma$ has columns $e_{\sigma 1}\;\; e_{\sigma 2}\;\; e_{\sigma 3}$. See for instance \cite[Section 5.1, Exercises 7,8,9]{DummitFoote}.

For example, for $\sigma=(1\;2\;3)$, we have $\sigma (x_1,x_2,x_3)=(x_3,x_1,x_2)$ and
$$\left( \begin{array}{ccc} 0 & 0 & 1 \\ 1 & 0 & 0 \\ 0 & 1 & 0 \end{array} \right )\left( \begin{array}{c} x_1 \\ x_2 \\ x_3 \end{array} \right ) = \left( \begin{array}{c} x_3 \\ x_1 \\ x_2 \end{array} \right ).$$

For readability, we always just write $\sigma$ for $P_\sigma$ when the context is clear. No confusion between cycles $(1\;2\;3)$ and vectors $(1,2,3)$ can arise, because cycles have no commas, while vectors have commas.

Permutations were incorporated into neo-Riemannian duality in \cite{fiorenollsatyendraMCM2013}.

\subsection{The Group $\langle \Sigma_3, \mathcal{J} \rangle$ is $\Sigma_3 \ltimes \mathcal{J}$} \label{subsec:Sigma3_J} \leavevmode \smallskip

Conjugation by elements of $\sigma_3$ permutes the generators $U$, $V$, and $W$.

\begin{prop} \label{prop:Sigma3_acts_on_J}
Recall the standard left action on 3-tuples from Section~\ref{subsec:Recollection_of_Permutations}, and recall the notation
$J^{1,2}$, $J^{2,3}$, and $J^{3,1}$ for $U$, $V$, and $W$ in Section~\ref{subsec:Definition_of_J}. \\
For $\sigma \in \Sigma_3$ we have the following compatibilities.
\begin{enumerate}
\item \label{prop:Sigma3_acts_on_J:i}
$\sigma J^{r,s} = J^{\sigma r, \sigma s} \sigma$
\item \label{prop:Sigma3_acts_on_J:ii}
$\sigma J^{r,s} \sigma^{-1}= J^{\sigma r, \sigma s}$
\item \label{prop:Sigma3_acts_on_J:iii}
$J^{r,s} \sigma=\sigma J^{\sigma^{-1}r, \sigma^{-1}s}$.
\end{enumerate}
\end{prop}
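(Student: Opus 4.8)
The plan is to reduce all three identities to a single fact proved by a one-line index chase, since \ref{prop:Sigma3_acts_on_J:i} implies both \ref{prop:Sigma3_acts_on_J:ii} and \ref{prop:Sigma3_acts_on_J:iii} by elementary manipulations. The first step is to record a uniform closed formula for the voicing reflections that does not privilege the three pairs $\{1,2\}$, $\{2,3\}$, $\{3,1\}$: reading off the displays in Section~\ref{subsec:Definition_of_J}, each of $U$, $V$, $W$ is the componentwise inversion $I_{x_r+x_s}$ about the sum of its two distinguished coordinates, so for every two-element subset $\{r,s\}\subseteq\{1,2,3\}$ (with the conventions $J^{s,r}:=J^{r,s}$ and $J^{1,2}=U$, $J^{2,3}=V$, $J^{3,1}=W$) one has
$$\big(J^{r,s}(x)\big)_i=-x_i+x_r+x_s,\qquad i=1,2,3,$$
equivalently the matrix entry $(J^{r,s})_{ij}=-\delta_{ij}+\delta_{jr}+\delta_{js}$. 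In particular $J^{r,s}$ depends only on the \emph{unordered} pair $\{r,s\}$, which is exactly what makes $J^{\sigma r,\sigma s}$ and $J^{\sigma^{-1}r,\sigma^{-1}s}$ well defined for every $\sigma\in\Sigma_3$.

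Next I would prove \ref{prop:Sigma3_acts_on_J:i} by evaluating both sides on an arbitrary $x\in\mathbb{Z}_{12}^{\times 3}$, using the contravariant convention $(\sigma x)_i=x_{\sigma^{-1}i}$ from Section~\ref{subsec:Recollection_of_Permutations}. On the left, $\big(\sigma J^{r,s}(x)\big)_i=\big(J^{r,s}(x)\big)_{\sigma^{-1}i}=-x_{\sigma^{-1}i}+x_r+x_s$; on the right, $\big(J^{\sigma r,\sigma s}(\sigma x)\big)_i=-(\sigma x)_i+(\sigma x)_{\sigma r}+(\sigma x)_{\sigma s}=-x_{\sigma^{-1}i}+x_{\sigma^{-1}\sigma r}+x_{\sigma^{-1}\sigma s}=-x_{\sigma^{-1}i}+x_r+x_s$. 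Since these agree for all $i$ and all $x$, we obtain $\sigma J^{r,s}=J^{\sigma r,\sigma s}\sigma$, which is \ref{prop:Sigma3_acts_on_J:i}. Equivalently, and perhaps more cleanly, one can argue at the level of matrices: conjugation by a permutation matrix satisfies $(\sigma A\sigma^{-1})_{ij}=A_{\sigma^{-1}i,\sigma^{-1}j}$, and substituting $A=J^{r,s}$ gives $(J^{r,s})_{\sigma^{-1}i,\sigma^{-1}j}=-\delta_{ij}+\delta_{j,\sigma r}+\delta_{j,\sigma s}=(J^{\sigma r,\sigma s})_{ij}$, which already yields \ref{prop:Sigma3_acts_on_J:ii}.

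Finally, \ref{prop:Sigma3_acts_on_J:ii} follows from \ref{prop:Sigma3_acts_on_J:i} by right multiplication by $\sigma^{-1}$, and \ref{prop:Sigma3_acts_on_J:iii} follows by applying \ref{prop:Sigma3_acts_on_J:i} with the pair $\{\sigma^{-1}r,\sigma^{-1}s\}$ in place of $\{r,s\}$, since then $J^{\sigma\sigma^{-1}r,\,\sigma\sigma^{-1}s}=J^{r,s}$. There is no genuine obstacle here: the proposition is essentially bookkeeping, and the only points requiring a little care are (a) keeping the two possible directions of the permutation action straight, as it is the \emph{contravariant} convention $(\sigma x)_i=x_{\sigma^{-1}i}$ that is in force, and (b) checking that under this convention $W$, $J^{3,1}$, and $J^{1,3}$ really are the same matrix, so that the index juggling in the statement is legitimate for all six ordered pairs arising as $(\sigma r,\sigma s)$.
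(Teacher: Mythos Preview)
Your proof is correct and follows essentially the same approach as the paper: both establish \ref{prop:Sigma3_acts_on_J:i} by a direct componentwise evaluation of $\sigma J^{r,s}(x)$ and $J^{\sigma r,\sigma s}\sigma(x)$ using the formula $(J^{r,s}(x))_i=-x_i+x_r+x_s$ (the paper phrases this via $I_{x_r+x_s}$), and then derive \ref{prop:Sigma3_acts_on_J:ii} and \ref{prop:Sigma3_acts_on_J:iii} from \ref{prop:Sigma3_acts_on_J:i} by the same elementary manipulations you describe. Your additional matrix-entry variant and the explicit remark that $J^{r,s}$ depends only on the unordered pair are useful clarifications but do not constitute a different route.
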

\begin{proof}
For \ref{prop:Sigma3_acts_on_J:i}, let $i,j \in \{1,2,3\}$ be such that $r=\sigma^{-1} i$ and $s=\sigma^{-1} j$. We follow Convention~\ref{conv:adding_constant_to_3tuple}, and have indicated where the definition of the $J$-operators from Section~\ref{subsec:Definition_of_J} are used.
$$\aligned
\sigma J^{r,s}(x_1,x_2,x_3) & \overset{\text{def}}{=} \sigma I_{x_r+x_s}(x_1,x_2,x_3) \\
 & \overset{}{=} \sigma \Big(-\big(x_1,\; x_2,\; x_3\big) + (x_r+x_s) \Big) \\
 & \overset{}{=}  -\Big(x_{\sigma^{-1}1},\; x_{\sigma^{-1}2},\; x_{\sigma^{-1}3} \Big) + (x_r+x_s)  \\
 & \overset{}{=}  -\Big(x_{\sigma^{-1}1},\; x_{\sigma^{-1}2},\; x_{\sigma^{-1}3} \Big) + (x_{\sigma^{-1} i}+x_{\sigma^{-1} j})  \\
 & \overset{}{=} I_{x_{\sigma^{-1}i}+x_{\sigma^{-1}j}}(x_{\sigma^{-1}1}, x_{\sigma^{-1}2},x_{\sigma^{-1}3})\\
 & \overset{\text{def}}{=} J^{i,j}(x_{\sigma^{-1}1}, x_{\sigma^{-1}2},x_{\sigma^{-1}3}) \\
 & \overset{}{=} J^{\sigma r,\sigma j}\sigma(x_1,x_2,x_3)
\endaligned$$
Claim \ref{prop:Sigma3_acts_on_J:ii} follows directly from \ref{prop:Sigma3_acts_on_J:i} by right multiplication with $\sigma^{-1}$, while \ref{prop:Sigma3_acts_on_J:iii} follows from \ref{prop:Sigma3_acts_on_J:i} by replacing $\sigma$ by $\sigma^{-1}$ and multiplying.
\end{proof}

\begin{prop} \label{prop:Sigma3_J_is_SemiDirectProduct}
The subgroup $\langle \Sigma_3, \mathcal{J} \rangle$ of $GL(3,\mathbb{Z}_{12})$ generated by the permutation matrices and the group $\mathcal{J}$ is the semi-direct product $\Sigma_3\ltimes \mathcal{J}$.
\end{prop}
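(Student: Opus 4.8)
The plan is to recognize $\langle \Sigma_3, \mathcal{J} \rangle$ as an internal semi-direct product via the standard criterion: one must check that $\mathcal{J}$ is normal in $\langle \Sigma_3, \mathcal{J}\rangle$, that $\langle \Sigma_3, \mathcal{J}\rangle = \Sigma_3\cdot\mathcal{J}$ as a set, and that $\Sigma_3 \cap \mathcal{J} = \{\mathrm{Id}\}$.

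\textbf{Normality and the product decomposition.} By Proposition~\ref{prop:Sigma3_acts_on_J}\ref{prop:Sigma3_acts_on_J:ii}, conjugation by any $\sigma \in \Sigma_3$ permutes the generators $U = J^{1,2}$, $V = J^{2,3}$, $W = J^{3,1}$ of $\mathcal{J}$; hence $\sigma \mathcal{J} \sigma^{-1} = \mathcal{J}$ for every $\sigma \in \Sigma_3$. It follows by the usual rewriting $(\sigma_1 j_1)(\sigma_2 j_2) = (\sigma_1\sigma_2)\big((\sigma_2^{-1}j_1\sigma_2)j_2\big)$ and $(\sigma j)^{-1} = \sigma^{-1}(\sigma j^{-1}\sigma^{-1})$ that $\Sigma_3\mathcal{J}$ is closed under multiplication and inversion, so it is a subgroup of $GL(3,\mathbb{Z}_{12})$. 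Since it contains $\Sigma_3$ and $\mathcal{J}$ and lies inside $\langle \Sigma_3, \mathcal{J}\rangle$, we get $\langle \Sigma_3, \mathcal{J}\rangle = \Sigma_3\mathcal{J}$, and $\mathcal{J}$ is normal there, being normalized by $\Sigma_3$ and by itself.

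\textbf{Triviality of the intersection.} It remains to show that no nonidentity permutation matrix lies in $\mathcal{J}$. Suppose $g \in \mathcal{J}$ is a permutation matrix, so that $ge_1, ge_2, ge_3$ are standard basis vectors. Write $g = U^k(UV)^m(UW)^n$ in the normal form of Theorem~\ref{thm:structure_of_J}\ref{thm:structure_of_J:semi-direct_product_form}, and use the action formulas of Theorem~\ref{thm:structure_of_J}\ref{item:action_of_normal_form} (equivalently, read off the columns of the matrices in Remark~\ref{rem:matrix_representation_of_J_normal_form}). If $k=1$, then $ge_1 = U(e_1) + m(0-1) + n(0-0) = (-m,\,1-m,\,1-m)$, whose last two coordinates are equal; matching this against $e_1$, $e_2$, $e_3$ produces contradictory congruences for $m$ in each case, so $k=1$ is impossible. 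If $k=0$, then $ge_1 = (1-m,\,-m,\,-m)$; again the last two coordinates coincide, so $ge_1$ can only equal $e_1$, which forces $m \equiv 0$. Then $g = (UW)^n$ and $ge_2 = (-n,\,1-n,\,-n)$, whose first and third coordinates coincide, so $ge_2$ can only equal $e_2$, forcing $n \equiv 0$. Hence $g = \mathrm{Id}$, and $\Sigma_3 \cap \mathcal{J} = \{\mathrm{Id}\}$.

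\textbf{Conclusion.} Combining the three facts, $\langle \Sigma_3, \mathcal{J}\rangle = \Sigma_3\mathcal{J}$ with $\mathcal{J}$ normal and $\Sigma_3 \cap \mathcal{J} = \{\mathrm{Id}\}$, so $\langle \Sigma_3, \mathcal{J}\rangle$ is the internal semi-direct product $\Sigma_3 \ltimes \mathcal{J}$. The only genuine computation is the coordinate bookkeeping ruling out nontrivial permutation matrices in $\mathcal{J}$; everything else is the routine recognition theorem for semi-direct products, with normality handed to us directly by Proposition~\ref{prop:Sigma3_acts_on_J}.
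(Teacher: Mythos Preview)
Your proof is correct and follows the same three-step strategy as the paper (normality, product decomposition, trivial intersection), with the first two steps handled essentially identically via Proposition~\ref{prop:Sigma3_acts_on_J}. The only genuine difference is in verifying $\Sigma_3 \cap \mathcal{J} = \{\mathrm{Id}\}$: the paper argues that the restriction of $\mathcal{J}$ to any of its consonant orbits is a copy of the $PLR$-group, and then asserts (without further justification) that no $PLR$-group element acts by permuting vector entries; you instead work directly with the normal form of Theorem~\ref{thm:structure_of_J} and rule out each standard basis vector as a possible image of $e_1$ and $e_2$ by a coordinate-matching argument. Your route is slightly longer but fully self-contained within the algebraic machinery already established, whereas the paper's shortcut leans on the reader's acceptance of a music-theoretic fact about $P$, $L$, $R$ that is plausible but not explicitly proved in the text.
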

\begin{proof}
Recall that $U$, $V$, and $W$ are involutions, so every element of $\mathcal{J}=\langle U,V,W\rangle$ can be written as a concatenation of $U$, $V$, and $W$.

The group $\calj$ is normal in $\langle \Sigma_3, \mathcal{J} \rangle$ because if we have such a concatenation $j_1 j_2 \cdots j_n$, and $\sigma \in \Sigma_3$, then
$$\sigma j_1 j_2 \cdots j_n \sigma^{-1} = \left( \sigma j_1 \sigma^{-1}\right) \left( \sigma j_2 \sigma^{-1} \right) \cdots \left( \sigma j_n \sigma^{-1} \right) \in \mathcal{J}$$
by Proposition~\ref{prop:Sigma3_acts_on_J}~\ref{prop:Sigma3_acts_on_J:ii}.

We have $\langle \Sigma_3, \mathcal{J} \rangle=\Sigma_3\mathcal{J}$ because if we have any concatenation of $U$, $V$, $W$ and elements of $\Sigma_3$, we can move all the permutation matrices to the left via Proposition~\ref{prop:Sigma3_acts_on_J}~\ref{prop:Sigma3_acts_on_J:iii}, and obtain a concatenation of $U$, $V$, $W$ on the right, in total an element of $\Sigma_3\mathcal{J}$.

Finally, we also have $\Sigma_3 \cap \calj = \{\text{Id}\}$ because we know that the restriction of $\calj$ to its six orbits is the $PLR$-group (in various correspondences), and no element of the $PLR$-group acts as a permutation of vector entries.
\end{proof}

\begin{prop}
The centralizer of $\calj$ in $GL(3, \mathbb{Z}_{12})$ is stable under conjugation by permutations as a set. If $C$ commutes with $U$, $V$, and $W$, then so does $\sigma C \sigma^{-1}$ for all $\sigma \in \Sigma_3$.
\end{prop}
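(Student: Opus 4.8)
The plan is to reduce the claim to the fact, already recorded in Proposition~\ref{prop:Sigma3_acts_on_J}~\ref{prop:Sigma3_acts_on_J:ii}, that conjugation by a permutation matrix permutes the three generators $U$, $V$, $W$ of $\calj$ among themselves. Writing $U=J^{1,2}$, $V=J^{2,3}$, $W=J^{3,1}$ as in Section~\ref{subsec:Definition_of_J}, and noting that $J^{r,s}$ depends only on the unordered pair $\{r,s\}$ (it is built from $I_{x_r+x_s}$), the identity $\sigma J^{r,s}\sigma^{-1}=J^{\sigma r,\sigma s}$ shows that the operation $C \mapsto \sigma C\sigma^{-1}$ acts on $\{U,V,W\}$ exactly as $\sigma$ acts on the three two-element subsets of $\{1,2,3\}$. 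In particular $\{U,V,W\}$, and hence $\calj=\langle U,V,W\rangle$, is stable under conjugation by any $\sigma\in\Sigma_3$ (this last fact is also implicit in the normality proof in Proposition~\ref{prop:Sigma3_J_is_SemiDirectProduct}).

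Next I would use that the centralizer of $\calj$ in $GL(3,\mathbb{Z}_{12})$ coincides with the centralizer of the generating set $\{U,V,W\}$. So suppose $C$ commutes with each of $U$, $V$, $W$, and fix $\sigma\in\Sigma_3$ together with a generator $G\in\{U,V,W\}$. By the previous paragraph $\sigma^{-1}G\sigma$ is again one of $U$, $V$, $W$, hence is fixed under conjugation by $C$, and therefore
\[
(\sigma C\sigma^{-1})\,G\,(\sigma C\sigma^{-1})^{-1}=\sigma\big(C(\sigma^{-1}G\sigma)C^{-1}\big)\sigma^{-1}=\sigma(\sigma^{-1}G\sigma)\sigma^{-1}=G.
\]
Thus $\sigma C\sigma^{-1}$ commutes with every generator of $\calj$, hence with all of $\calj$, which is the assertion.

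Equivalently, and even more succinctly, one may phrase this as: conjugation by the permutation matrix $P_\sigma$ is an automorphism of $GL(3,\mathbb{Z}_{12})$ carrying $\calj$ onto itself, and any automorphism of a group carries the centralizer of a subgroup onto the centralizer of the image subgroup; applied to $\calj$ this yields the stated stability.

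There is no genuine obstacle here; the only point requiring a little care is the combinatorial bookkeeping that $\Sigma_3$ permutes the index pairs $\{1,2\},\{2,3\},\{3,1\}$ — equivalently, that these are precisely the three two-element subsets of $\{1,2,3\}$ — so that conjugation really does send $\{U,V,W\}$ to itself rather than to a larger collection of $J$-operators.
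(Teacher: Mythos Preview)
Your proof is correct and follows essentially the same approach as the paper: both arguments reduce to Proposition~\ref{prop:Sigma3_acts_on_J}~\ref{prop:Sigma3_acts_on_J:ii}, that conjugation by $\sigma$ permutes the generators $J^{r,s}$, and then verify commutation with the conjugated element by a short computation. The paper conjugates the equation $J^{r,s}C=CJ^{r,s}$ by $\sigma$ directly, whereas you conjugate a generator $G$ by $\sigma C\sigma^{-1}$; these are the same manipulation written in slightly different orders, and your additional remark about automorphisms carrying centralizers to centralizers is a nice conceptual summary.
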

\begin{proof}
Suppose $C$ commutes with $U$, $V$, and $W$, and let $\sigma \in \Sigma_3$. Then
$$\aligned
J^{r,s}C &= CJ^{r,s} \\
\sigma (J^{r,s}C) \sigma^{-1} &= \sigma (C J^{r,s}) \sigma^{-1} \\
J^{\sigma r, \sigma s} (\sigma C \sigma^{-1}) &= (\sigma  C \sigma^{-1}) J^{\sigma r,\sigma s}
\endaligned$$
where we use Proposition~\ref{prop:Sigma3_acts_on_J}~\ref{prop:Sigma3_acts_on_J:i} and \ref{prop:Sigma3_acts_on_J:iii} in the last step.
\end{proof}

\subsection{Subgroups of $\Sigma_3 \ltimes \calj$ and their Triadic Orbits} \label{subsec:Triadic_Orbits} \leavevmode \smallskip

We now consider subgroups of $\Sigma_3 \ltimes \calj$ and their relevant triadic orbits. These orbits will be of use in Section~\ref{subsec:Hook_Group_And_Its_Structure} when we study the Hook group $\calh$ and its properties. The present section also elucidates $\calj$ and $\Sigma_3 \ltimes \calj$ as triadic transformation groups.

The Structure Theorem~\ref{thm:structure_of_J} provides a good understanding of $\calj$. The group $\calj$ contains an index 2 commutative subgroup
$$\calj^+:=\big\{(UV)^m(UW)^n \;\vert\; m, n = 0, 1, \dots, 11\big\}$$
of operations that preserve mode, that is, which map major triads to major triads, and minor triads to minor triads, no matter the voicing (recall from Theorem~\ref{thm:structure_of_J}~\ref{item:action_of_normal_form} that $(UV)^m(UW)^n$ adds a constant in each component).
The other coset
$$\calj^-:=U\calj^+=\left\{U(UV)^m(UW)^n \;\vert\; m, n = 0, 1, \dots, 11\right\}$$
consists of operations that reverse mode, that is, send major triads to minor triads, and minor triads to major triads, no matter the voicing.

Clearly, permutations also preserve the modes of consonant triads. So we extend the foregoing discussion to include permutations as well. The group of {\it mode-preserving operations} in $\Sigma_3 \ltimes \calj$ is
$$\Sigma_3 \ltimes \calj^+:=\Sigma_3 \calj^+ = \big\{\sigma(UV)^m(UW)^n \;\vert\; \sigma \in \Sigma_3, \; m, n = 0, 1, \dots, 11\big\},$$
and the coset of {\it mode-reversing operations} in $\Sigma_3 \ltimes \calj$ is
$$\Sigma_3 \calj^- = \big\{ \sigma U(UV)^m(UW)^n \;\vert\; \sigma \in \Sigma_3, \; m, n = 0, 1, \dots, 11\big\}.$$
In summary we have
$$\calj=\calj^+ \bigsqcup \; \calj^- \hspace{.75in} \text{and} \hspace{.75in} \Sigma_3\calj=\Sigma_3\calj^+ \bigsqcup \; \Sigma_3\calj^-.$$

We define $\calh$ to be the subgroup of $\Sigma_3 \ltimes \calj$ that maps root position triads to root position triads, and we call $\calh$ the {\it Hook group}. We will study its structure in Section~\ref{subsec:Hook_Group_And_Its_Structure} and see that it is exactly the image of the representation $\rho$ from Section~\ref{subsec:defn_of_rho}. For the moment, we introduce the following collections of triads, and observe that they are orbits of the $C$-major or $C$-minor triad under appropriate groups.

$$\aligned
\texttt{Triads}&:=\text{set of all $144$ consonant triads in any of the 6 voicings} \\
&= \left\{ \left( \begin{array}{c} 0 \\ 4 \\ 7 \end{array} \right),
\left(\begin{array}{c} 4 \\ 7 \\ 0 \end{array}\right),
\left(\begin{array}{c} 7 \\ 0 \\ 4  \end{array}\right),
\left(\begin{array}{c} 0 \\ 7 \\ 4 \end{array}\right),
\left(\begin{array}{c} 4 \\ 0 \\ 7  \end{array}\right),
\left(\begin{array}{c} 7 \\ 4 \\ 0  \end{array}\right), \right.\\
&\phantom{=} \hspace{.2in} \left. \left( \begin{array}{c} 0 \\ 3 \\ 7 \end{array} \right),
\left(\begin{array}{c} 3 \\ 7 \\ 0 \end{array}\right),
\left(\begin{array}{c} 7 \\ 0 \\ 3  \end{array}\right),
\left(\begin{array}{c} 0 \\ 7 \\ 3 \end{array}\right),
\left(\begin{array}{c} 3 \\ 0 \\ 7  \end{array}\right),
\left(\begin{array}{c} 7 \\ 3 \\ 0  \end{array}\right), \text{ et cetera} \right\} \\
&=\text{orbit of $(0,4,7)$ under action of $\Sigma_3 \ltimes \calj$ }
\endaligned$$

$$\aligned
\texttt{MajTriads}&:=\text{set of all 72 major triads in any of the 6 voicings} \\
&= \left\{ \left( \begin{array}{c} 0 \\ 4 \\ 7 \end{array} \right),
\left(\begin{array}{c} 4 \\ 7 \\ 0 \end{array}\right),
\left(\begin{array}{c} 7 \\ 0 \\ 4  \end{array}\right),
\left(\begin{array}{c} 0 \\ 7 \\ 4 \end{array}\right),
\left(\begin{array}{c} 4 \\ 0 \\ 7  \end{array}\right),
\left(\begin{array}{c} 7 \\ 4 \\ 0  \end{array}\right),  \text{ et cetera} \right\} \\
&=\text{orbit of $(0,4,7)$ under action of $\Sigma_3 \ltimes \calj^+$ }
\endaligned$$

$$\aligned
\texttt{MinTriads}&:=\text{set of all 72 minor triads in any of the 6 voicings} \\
&= \left\{ \left( \begin{array}{c} 0 \\ 3 \\ 7 \end{array} \right),
\left(\begin{array}{c} 3 \\ 7 \\ 0 \end{array}\right),
\left(\begin{array}{c} 7 \\ 0 \\ 3  \end{array}\right),
\left(\begin{array}{c} 0 \\ 7 \\ 3 \end{array}\right),
\left(\begin{array}{c} 3 \\ 0 \\ 7  \end{array}\right),
\left(\begin{array}{c} 7 \\ 3 \\ 0  \end{array}\right),  \text{ et cetera} \right\} \\
&=\text{orbit of $(0,3,7)$ under action of $\Sigma_3 \ltimes \calj^+$ }
\endaligned$$

$$\aligned
\texttt{RootPosTriads}&:=\text{set of all 24 consonant triads in root position} \\
&= \left\{ \left( \begin{array}{c} 0 \\ 4 \\ 7 \end{array} \right),
\left(\begin{array}{c} 1 \\ 5 \\ 8 \end{array}\right),
\left(\begin{array}{c} 2 \\ 6 \\ 9  \end{array}\right),
\dots,
\left(\begin{array}{c} 0 \\ 3 \\ 7 \end{array}\right),
\left(\begin{array}{c} 1 \\ 4 \\ 8  \end{array}\right),
\left(\begin{array}{c} 2 \\ 5 \\ 9  \end{array}\right),  \dots \right\} \\
&=\text{orbit of $(0,4,7)$ under action of $\calh$}
\endaligned$$

$$\aligned
\texttt{DualRootPosTriads}&:=\text{set of all 24 consonant triads in dualistic root position} \\
&= \left\{ \left( \begin{array}{c} 0 \\ 4 \\ 7 \end{array} \right),
\left(\begin{array}{c} 1 \\ 5 \\ 8 \end{array}\right),
\left(\begin{array}{c} 2 \\ 6 \\ 9  \end{array}\right),
\dots,
\left(\begin{array}{c} 7 \\ 3 \\ 0 \end{array}\right),
\left(\begin{array}{c} 8 \\ 4 \\ 1  \end{array}\right),
\left(\begin{array}{c} 9 \\ 5 \\ 2  \end{array}\right),  \dots \right\} \\
&=\text{orbit of $(0,4,7)$ under action of $\calj$ }
\endaligned$$

The penultimate claim that \texttt{RootPosTriads} is the $\calh$-orbit of $(0,4,7)$ follows from equations \eqref{equ:rho_does_as_should_on_+} and \eqref{equ:rho_does_as_should_on_-} and the that fact that $\calh$ is the image of $\rho$ in Section~\ref{subsec:Hook_Group_and_UTTs}.

\subsection{Computer Observations about $\Sigma_3 \ltimes \calj$} \label{subsec:Computer_Observations_On_Traces} \leavevmode \smallskip

$$\text{trace}\Big((UV)^m(UW)^n\Big)=3 \hspace{1in} \text{trace}\Big(U(UV)^m(UW)^n\Big)=-1$$
For any cyclic permutation $(a\;b\;c) \in \Sigma_3$, we have
$$\text{trace}\Big((a\;b\;c)(UV)^m(UW)^n\Big)=0 \hspace{1in} \text{trace}\Big((a\;b\;c) U(UV)^m(UW)^n\Big)=2$$
For any transposition $(a\;b) \in \Sigma_3$, we have
$$\text{trace}\Big((a\;b)(UV)^m(UW)^n\Big)=1 \hspace{1in} \text{trace}\Big((a\;b) U(UV)^m(UW)^n\Big)=1$$

The conjugation class of $U$ in $\calj$ consists of 36 elements, while the conjugation class of $U$ in $\Sigma_3 \ltimes \calj$ has 108 elements.

\section{Musical Examples and Music-Theoretical Consequences} \label{sec:Musical_Examples_And_Consequences}

\subsection{Hexatonic Grail Motive as a Cycle} \label{subsec:Grail_Motive} \leavevmode \smallskip

The Grail motive in Wagner's {\it Parsifal}, Act 3, measures 1098 -- 1100 is harmonized by the consonant triads $E\flat$, $b$, $G$, $e\flat$ which eventually lead via $A\flat$ to $D\flat$. Following David Clampitt's \cite{clampittParsifal} hexatonic reading of the first four chords we extrapolate from the upper three voices a full hexatonic cycle leading back to $E\flat$ as shown in the upper part of Figure \ref{fig:ParsifalScore_HexatonicCycle}.
\begin{figure}[h]
  \centering
  % Requires \usepackage{graphicx}
  \includegraphics[width=4in]{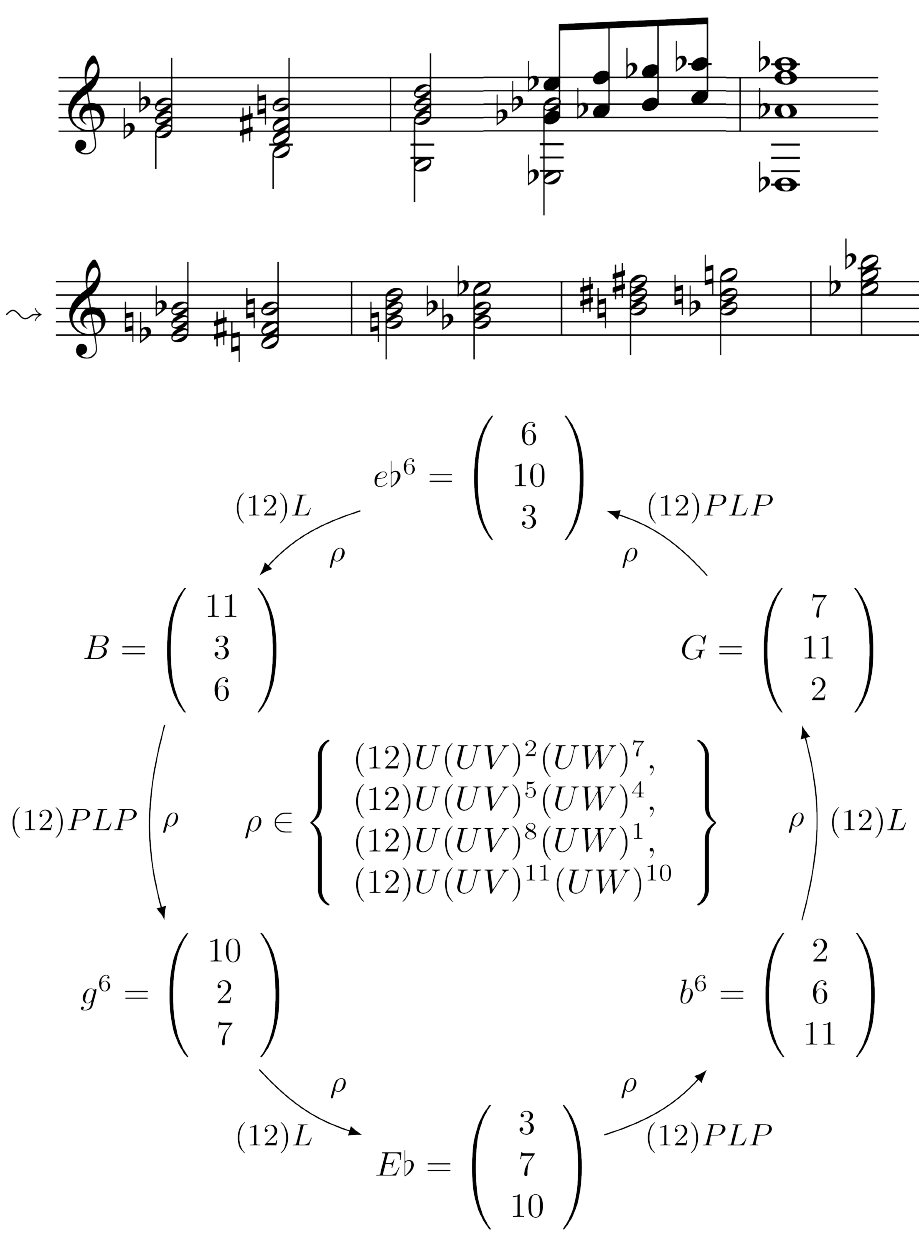}\\
  \caption{The two musical staves show the extrapolation of a hexatonic cycle of alternating root position major triads and first inversion minor traids from the first four chords in the harmonization of Grail motive in Wagner's {\it Parsifal}, Act 3, measures 1098--1100. The top staff depicts David Clampitt's \cite{clampittParsifal} reduction of these measures. The staff below depicts our hexatonic extrapolation. The hexagonal network provides a transformational analysis of this extrapolation. The outer labels $(12)PLP$ and $(12)L$ denote two contextual transformations between the chosen voicings (i.e. root position major triads and first inversion minor triads). The inner label $\rho$ stands for any of the four elements of $\Sigma_3 \ltimes \calj$ that provide a cyclic orbit along these six voicings. The normal forms of these group elements are depicted in the center of the network.} \label{fig:ParsifalScore_HexatonicCycle}
\end{figure}

The lower part of this figure provides a transformational network for these six voicings with two readings. The outer labels $(12)PLP$ and $(12)L$ denote two contextual transformations which form a hexatonic flip-flop-cycle. The inner label $\rho$ stands for four elements of $\Sigma_3 \ltimes \calj$, each of which yields a cyclic orbit along these six voicings.
%$$\left( \begin{array}{c} 3 \\ 7 \\ 10 \end{array} \right) \xymatrix@C=3pc{\ar[r]^{(12)PLP} &}
%\left( \begin{array}{c} 2 \\ 6 \\ 11 \end{array} \right) \xymatrix@C=3pc{\ar[r]^{(12)L} & }
%\left( \begin{array}{c} 7 \\ 11 \\ 2 \end{array} \right) \xymatrix@C=3pc{\ar[r]^{(12)PLP} & }
%\left( \begin{array}{c} 6 \\ 10 \\ 13 \end{array} \right) \xymatrix@C=3pc{\ar[r]^{(12)L} & }
%\left( \begin{array}{c} 3 \\ 7 \\ 10 \end{array} \right)$$
To find the linear transformations with this cyclic orbit, we recall Theorem~\ref{thm:structure_of_J}~\ref{item:action_of_normal_form} and use the first three sequence elements to produce a system of 2 equations in 2 unkowns. {\small $$U(3,7,10)+m(10-3)+n(10-7)=(6,2,11) \hspace{.2in} U(2,6,11)+m(11-2)+n(11-6)=(11,7,2)$$}
\noindent Notice that the output chords are in the $T/I$-class of the input, but need to be reordered by permutation (12) to match the sequence. The two equations reduce to $4m=8$ and $n=1+3m$, so solutions are \;
\begin{tabular}{|c|c|c|c|c|}
\hline
$m$ & 2 & 5 & 8 & 11 \\ \hline
$n$ & 7 & 4 & 1 & 10 \\ \hline
\end{tabular}\; , and the elements of $\Sigma_3 \ltimes \calj$ with orbit the Grail sequence are
{\small
$$\begin{array}{llllll} (12) U (UV)^2 (UW)^7 & = & \left (  \begin{array}{ccc} {11} & 5 & 9 \\ {10} & 6 & 9 \\ {11} & 6 & 8 \end{array} \right ), \, \, &
(12) U (UV)^5 (UW)^4 & = & \left (  \begin{array}{rrr} 8 & 8 & 9 \\ 7 & 9 & 9 \\ {8} & 9 & 8 \end{array} \right ), \\ &&&&&\\
(12) U (UV)^8 (UW)^1 & = & \left (  \begin{array}{rrr} 5 & 11 & 9 \\ 4 & 0 & 9 \\ {5} & 0 & 8 \end{array} \right ), \, \, &
 (12) U (UV)^{11} (UW)^{10} & = & \left (  \begin{array}{rrr} 2 & 2 & 9 \\ 1 & 3 & 9 \\ {2} & 3 & 8 \end{array} \right ).
\end{array}$$
}

\subsection{Recalcitrant Viola in Schoenberg, String Quartet in $D$ minor, op. 7} \label{subsec:Recalcitrant Viola} \leavevmode \smallskip

We revisit an analysis of a triadic sequence in Schoenberg, String Quartet in $D$ minor, op.~7 by Fiore--Noll--Satyendra  \cite{fiorenollsatyendraSchoenberg}. Our group $\calj$ affords us a more economical description, allowing us to replace both $(13)P$ and $(13)R$ by $(13)V$. We also complement the work of \cite{fiorenollsatyendraSchoenberg} to include two final 3-pitch-class sequences in the viola using $(13)L$, which we promptly also replace by $(13)V$. We use the term {\it segment} to refer to 3-note sequences such as $(1,6,10)$, {\it et cetera}.

\begin{figure}[h]
    \centering
    \begin{subfigure}[b]{1\textwidth}
        \centering
        \includegraphics[width=4.8in]{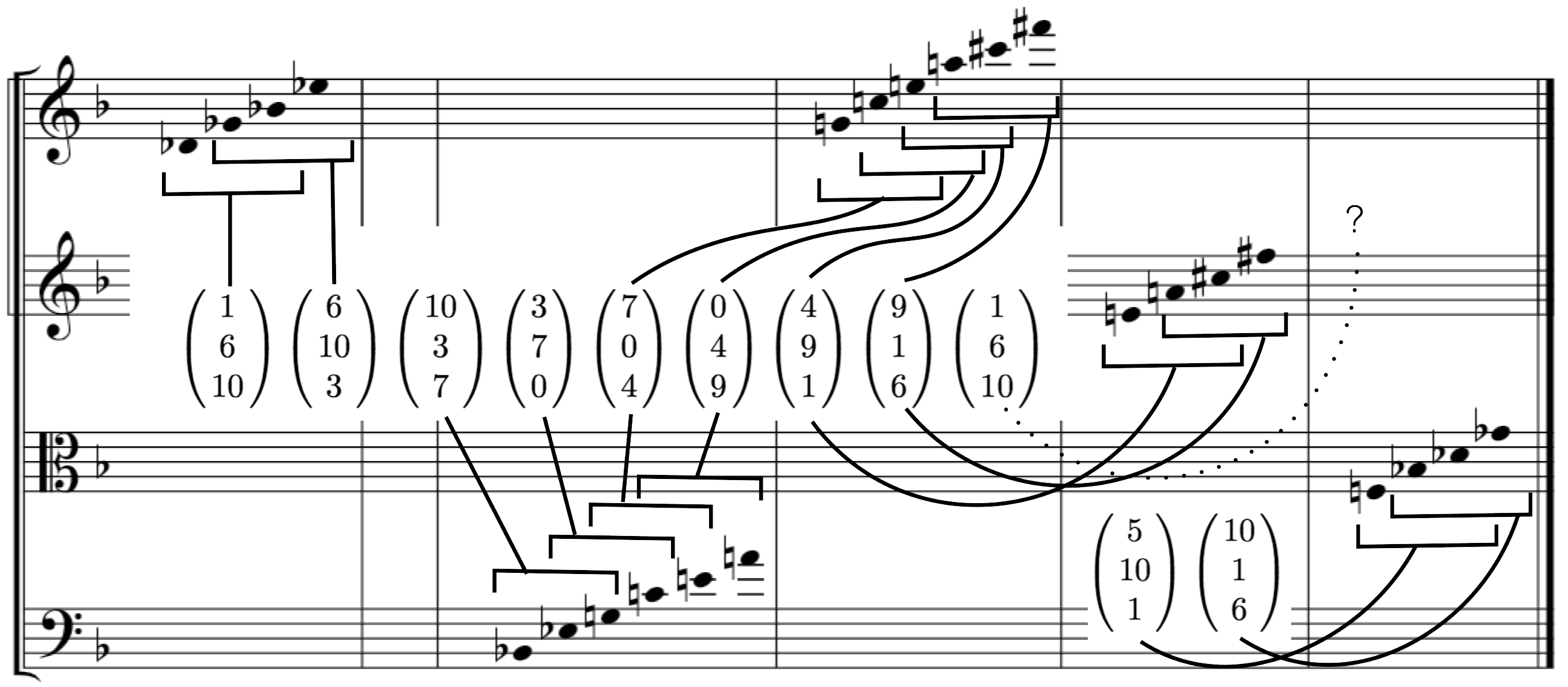}
        \caption{Schoenberg, String Quartet in $D$ minor, op. 7, measures 88--93. See Figure~\ref{fig:Schoenberg_Cycles} for a mapping of these measures in actual and implied cycles.\label{fig:Schoenberg_Partitur} }
    \end{subfigure}
    \\
    \begin{subfigure}[b]{1\textwidth}
        \centering
        \includegraphics[width=4.46in]{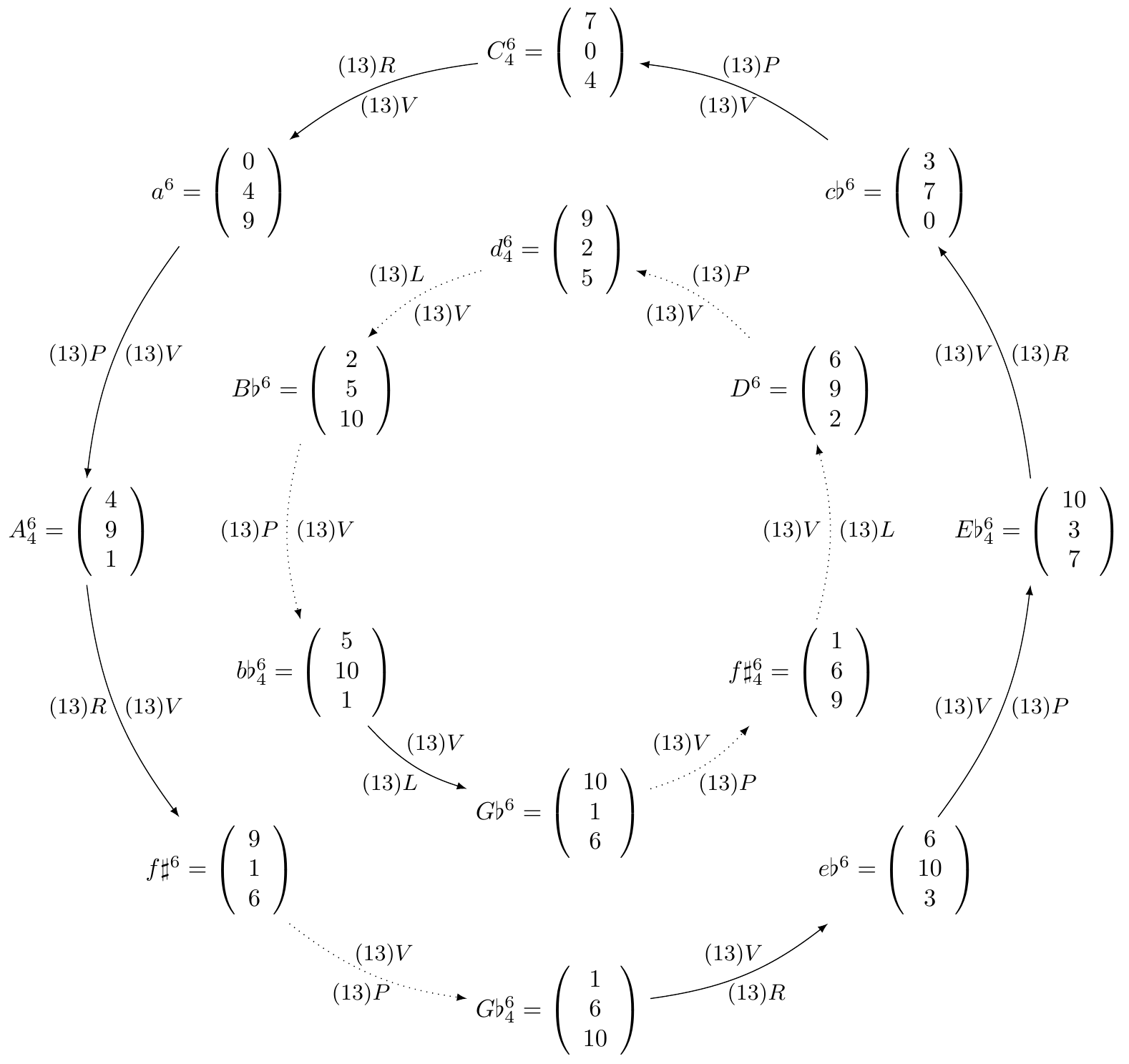}
        \caption{Cyclic networks extrapolating connections between triadic segments in Schoenberg, String Quartet in $D$ minor, op. 7, measures 88--93. The outer cycle is the octatonic $PR$-cycle of the first 9 triadic segments of Figure~\ref{fig:Schoenberg_Partitur}, while the inner cycle is the hexatonic $PL$-cycle {\it implied} by the final viola $L$-relationship $b\flat^6_4 \mapsto G\flat^6$ and the $P$-relationship between the final $G\flat$ and the earlier $f\sharp$ in the 2nd violin. All implied edges are indicated by dotted arrows. \label{fig:Schoenberg_Cycles}}
    \end{subfigure}
   \caption{}
\end{figure}

The first 9 consonant triadic segments of Figure~\ref{fig:Schoenberg_Partitur} form a complete enchained octatonic $PR$-cycle (the transformations $P$ and $R$ are defined via local conjugation of standard dualistic $P$ and $R$ by permutations, see Section~\ref{sec:Recollection_of_PLR}). The $G\flat$-segment in ordering $(1,6,10)$ in the ninth position does not actually occur in the score, as we indicate with a dotted line and question mark. Instead, the $G\flat$-segment appears in ordering $(10,1,6)$ as the final viola notes. The penultimate segment $b\flat$ in voicing $(5,10,1)$, after the $PR$-cycle, does not belong to the octatonic $PR$-cycle, but instead stands in an $L$-relationship to the final $G\flat$-segment, which in turn stands in a $P$-relationship to the preceding $f\sharp$.

In Figure~\ref{fig:Schoenberg_Cycles} we map the triadic segments as an enchained octatonic $PR$-cycle and an implied enchained hexatonic $PL$-cycle. The final $G\flat$ pitch-class set $\{6,10,1\}$, common to both cycles, is at the bottom of both cycles in its two relevant voicings. In Figure~\ref{fig:Schoenberg_Cycles}, all the arrows have two labels: $(13)V$ and one of $(13)P$, $(13)L$, $(13)R$. The two labels illustrate how the single transformation $(13)V$ offers a more economical description than the other three together. The transformation $(13)V$ is equal to RICH, {\it retrograde inversion enchaining}. All consonant cycles for RICH were determined in Table~1 on page 113 of \cite{fiorenollsatyendraMCM2013}. See Straus \cite{StrausContextualInversions} for some analyses involving RICH, one of which we revisited in the motivational {\it Problembeispiel} in Section~\ref{subsec:MotivationalProblembeispiel}.

\subsection{Affine Morphisms of Generalized Interval Systems} \label{subsec:Affine_Morphisms} \leavevmode \smallskip

Our determination of the centralizer of $\calj$ in the monoid $\text{Aff}(3,\mathbb{Z}_{12})$ in Proposition~\ref{prop:Centralizer_of_J_in_Aff(3,Z12)} is relevant for constructing morphisms of generalized interval systems, as we very briefly indicate with a few examples.

Consider again the first 9 consonant triads indicated in Figure~\ref{fig:Schoenberg_Partitur} and in the outer ring of Figure~\ref{fig:Schoenberg_Cycles}. The image of these interlocking consonant triads under the affine endomorphism $x \mapsto 7x+7$ of $\mathbb{Z}_{12}$ sends interlocking major/minor triads to interlocking ``jet/shark'' trichords, as in Figure~15 of the paper of Fiore--Noll--Satyendra \cite{fiorenollsatyendraSchoenberg}. The outer ring of Figure~\ref{fig:Schoenberg_Cycles} is mapped to its $(2,1,5)$-analogue via the affine map $x \mapsto 7x+7$ in such a way that each of the relevant squares commutes, see Figure~15 of \cite{fiorenollsatyendraSchoenberg}. The present paper contributes the observation that both the consonant $(1,6,10)$-cycle and the non-consonant $(2,1,5)$-cycle are labelled by the single transformation $(13)V$ and this transformation commutes with $x \mapsto 7x+7$ by Proposition~\ref{prop:Centralizer_of_J_in_Aff(3,Z12)}. Moreover, thanks to our decomposition of the uniform triadic transformations representation $\calh$ in Proposition~\ref{prop:H_decomposition}~\ref{prop:H_decomposition:iii}, we know that the instantiation $(13)V$ of RICH is an element of $\calh$, {\it for all 3-tuples}. Notice that not only do we have an economy of description in the sense of only a single transformation $(13)V$ for {\it both} the consonant $(1,6,10)$-cycle and the non-consonant $(2,1,5)$-cycle, but we also have the exact enchaining of consecutive chords, all in the representation $\calh$.

The musical interest in the image of the consonant $(1,6,10)$-cycle under the affine transformation $x \mapsto 7x+7$ is that the image provides pitch-class segment material for other passages in the piece, including the opening theme in measures 1 and 2. See Figure~3 of \cite{fiorenollsatyendraSchoenberg} for measure numbers of other passages with fragments of the affine image.

Proposition~\ref{prop:Centralizer_of_J_in_Aff(3,Z12)} applies not only to invertible affine transformations like $x \mapsto 7x+7$ in the preceding paragraphs, but also to non-invertible affine transformations such as $x \mapsto 10x$ in Figure~15 of \cite{fiorenollsatyendraSchoenberg}. Thus, Proposition~\ref{prop:Centralizer_of_J_in_Aff(3,Z12)} provides the mathematical justification for such instances of morphisms of simply transitive groups actions and morphisms of their associated {\it generalized interval systems}. See Section 2 of \cite{fiorenollsatyendraSchoenberg} for a development of such morphisms. Another example of a morphism is $x-2$ in Figure~\ref{fig:redo_of_Straus_on_Webern} of the Webern {\it Problembeispiel}.

Section~\ref{subsec:SchillingerNetwork} is dedicated to an example where these ideas are applied to the space $\mathbb{Z}_{7}^{\times 3}$ of generic scale degree triples rather than the space $\mathbb{Z}_{12}^{\times 3}$ of specific pitch class triples. Section \ref{subsec:Diatonic_Falling_Fifth} introduces this switch from specific to generic coordinates on the basis of yet another musically interesting example.

\subsection{Diatonic Falling Fifth Sequence as a Cycle} \label{subsec:Diatonic_Falling_Fifth} \leavevmode \smallskip

The Structure Theorem~\ref{thm:structure_of_J} and this entire paper are formulated for $\mathbb{Z}_{12}$ because of the main application to the twelve tone system. However, we did not use any specifics about 12, so analogous results also hold $\mathbb{Z}_n$. Particularly interesting is $n=7$ because $\mathbb{Z}_{7}$ models the diatonic pitch collection. We encode the underlying set of the $C$-major scale as $C \leftrightarrow 0$, $D \leftrightarrow 1$, $E \leftrightarrow 2$, \dots, and finally $B \leftrightarrow 6$. The diatonic falling fifth sequence is then encoded as in Figure~\ref{fig:Diatonic_Falling_Fifths}. To find a linear transformation with this orbit, we recall Theorem~\ref{thm:structure_of_J}~\ref{item:action_of_normal_form} and use the first three sequence elements to produce a system of 2 equations in 2 unkowns.
$$U(0,2,4)+m(4-0)+n(4-2)=(0,5,3) \hspace{.5in} U(5,0,3)+m(3-5)+n(3-0)=(1,6,3)$$
Notice that the output chords are in the $T/I$-class of the input, but need to be reordered by permutation (12) to match the sequence. The solution is $m=3$ and $n=0$, so the linear transformation in $\Sigma_3 \ltimes \calj(\mathbb{Z}_7)$ with orbit the diatonic falling fifth sequence is
$$(12)U(UV)^3=
\left(
\begin{array}{ccc}
5 & 0 & 3 \\
4 & 1 & 3 \\
5 & 1 & 2
\end{array}
\right).$$

\begin{figure}[h]
  \centering
  % Requires \usepackage{graphicx}
  \includegraphics[width=5.5in]{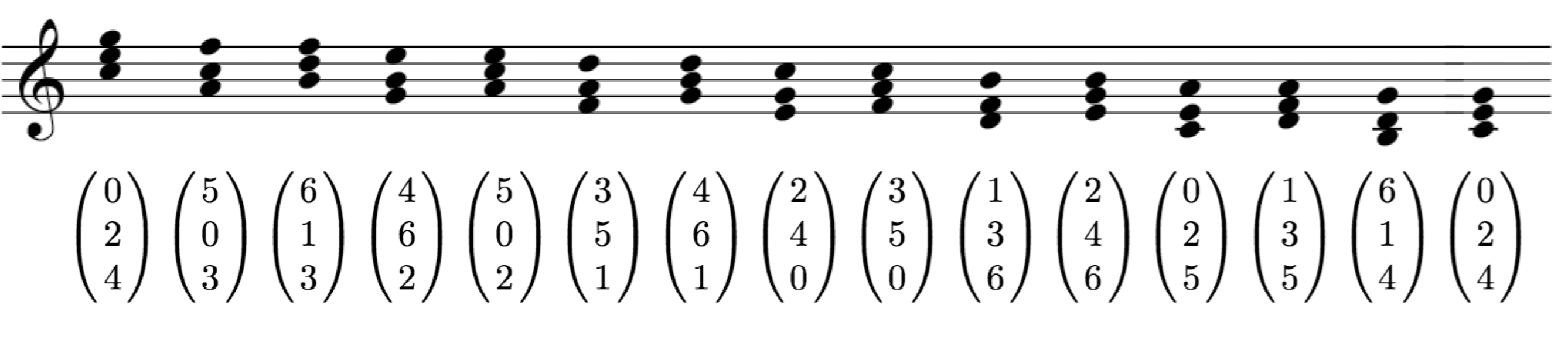}\\
  \caption{The diatonic falling fifth sequence and its encoding as vectors with entries in $\mathbb{Z}_7$. The encoding of the $C$-major scale collection is $C \leftrightarrow 0$, $D \leftrightarrow 1$, $E \leftrightarrow 2$, \dots, and finally $B \leftrightarrow 6$.
  The diatonic falling fifth sequence is a cycle of the sole transformation $(12)U(UV)^3$ in $\Sigma_3 \ltimes \calj(\mathbb{Z}_7)$. } \label{fig:Diatonic_Falling_Fifths}
\end{figure}

\subsection{A Transformational Idea of Joseph Schillinger Revisited} \label{subsec:SchillingerNetwork} \leavevmode \smallskip

The composer and teacher Joseph Schillinger (1895--1943) is one of the early pioneers of transformational thinking in musical composition. He utilized affine transformations in pitch and rhythm. And in particular he proposed the expansion and contraction of musical pitch, both in the specific pitch class domain $\mathbb{Z}_{12}$ as well as in the generic scale degree domain $\mathbb{Z}_7$. In Book 2: {\it Theory of Pitch Scales} (p. 138) of the posthumously published {\it Schillinger System of Musical Composition} \cite{Schillinger1946} there is a discussion of melodies, which -- from a transformational perspective -- can be viewed as results of a first expansion, i.e. an augmentation by factor $2 \, \, mod \, \, 7$. Schillinger argues that the (re-)contraction of the melody may provide new material while it preserves thematic continuity at the same time. He recommends to utilize the transformed thematic motives in the introduction or in interludes.

As an aside he mentions that {\it the processes of expanding and contracting music often leads to startling discoveries} and illustrates this statement with a comparison of Vincent Youmann's {\it Without a Song} and Nicolai Rimsky-Korsakov's {\it Hymn to the Sun} (from Act II of his opera {\it Coq d'Or}). A slight elaboration of this observation is presented in Figure~\ref{fig:SchillingerDiagram}.
%\begin{sidewaysfigure}[h]
%  \centering
  % Requires \usepackage{graphicx}
%  \includegraphics[height=7.57in]{SchillingerDiagramGedreht.pdf}
%  \caption{Transformational network inspired by an argument of Joseph Schillinger on melodic expansion (Subsection %\ref{subsec:SchillingerNetwork}).} \label{fig:SchillingerDiagram}
%\end{sidewaysfigure}

\begin{sidewaysfigure}[h]
\vspace{5.75in}

\begin{subfigure}{1\textwidth}
\centering
\includegraphics[width=8.5in]{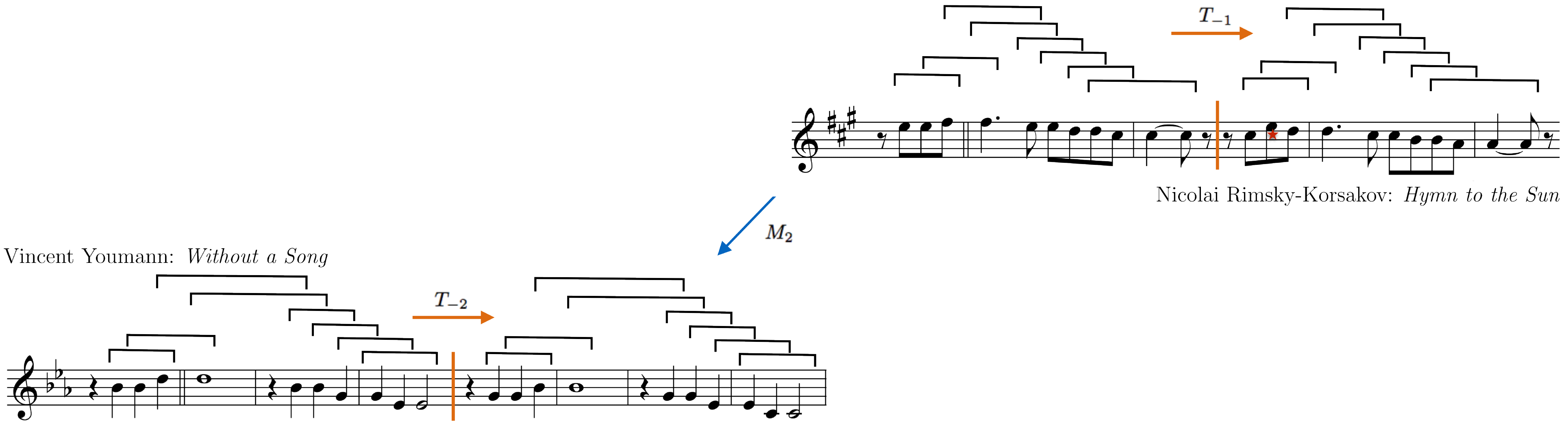}
\end{subfigure}
\vspace{.25in}

\begin{subfigure}{1\textwidth}
\centering
\includegraphics[width=8.5in]{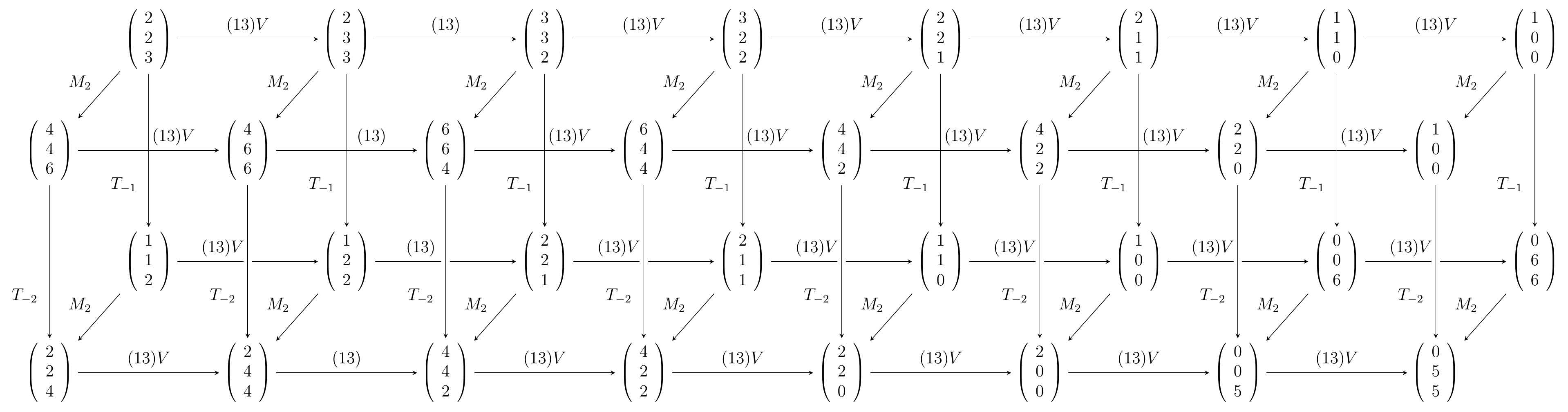}
\end{subfigure}

\caption{Transformational network inspired by an argument of Joseph Schillinger on melodic expansion. The two back rows of the diagram are the bracketed 3-note segments of Rimsky-Korsakov's {\it Hymn to the Sun}. In this {\it mod} 7 diagram, 0 corresponds to the last note of the respective initial melody, so $0=C\sharp$ in the back two rows, and (2,2,3) in the top back row corresponds to $E$, $E$, $F\sharp$. The front two rows are the bracketed 3-note segments of Youmann's {\it Without a Song}, so $0=E\flat$ there, and $(4,4,6)$ in the top front row corresponds to $B\flat$, $B\flat$, $D$. The transformation $M_2$ is multiplication by 2, and is indicated between the staves in blue at an angle to match its appearance in the diagram. The horizontal squares commute by Proposition~\ref{prop:Centralizer_of_J_in_Aff(3,Z12)}.  } \label{fig:SchillingerDiagram}

\end{sidewaysfigure}

Both melodies possess a model-sequence structure. In the {\it Hymn to the Sun} the sequence repeats the model one scale degree lower. It deviates in a single note (indicated by a star) from the model.
In {\it Without a Song} the sequence repeats the model two scale degrees lower. In order to illustrate the match of both melodies up to augmentation/contraction we have encoded their scale degrees in such a way that the last note of the model corresponds to the scale degree $0 \, \, mod \, \, 7$ in both cases, i.e. $C\sharp$ for {\it Hymn to the Sun} and $E\flat$ for {\it Without a Song}.
In the spirit of the present article we added a transformational analysis of the 10-note melodic models themselves.  These  melodies have been covered by all consecutive 3-note segments, which we regard as elements of $\mathbb{Z}_7^{\times 3}$. Both melodies exemplify the same transformational structure: except for the permutation $(13)$ between segments 2 and 3 there is the RICH-transformation which connects all other consecutive pairs of segments:
$$(13)V = \left(
\begin{array}{ccc}
0 & 1 & 0 \\
0 & 0 & 1 \\
-1 & 1 & 1
\end{array}
\right).$$
Note, that in contrast to Section \ref{subsec:Recalcitrant Viola} the RICH transform $(13)V$ in the present example acts on triples of scale degrees rather than triples of pitch classes.

\FloatBarrier

\subsection{Recovering $PLR$-$T/I$-Duality and a Special Case of Duality Theorem of Fiore--Satyendra} \label{subsec:Recovering_PLR-TI_Duality_and_FS} \leavevmode \smallskip

The centralizer results of Sections~\ref{subsec:JCentralizer_in_GL} and \ref{subsec:JCentralizer_in_Aff} and the normal form action in Theorem~\ref{thm:structure_of_J}~\ref{item:action_of_normal_form} allow us to recover the classical $PLR$-$T/I$-duality recalled in Section~\ref{sec:Recollection_of_PLR} and duality for trichords containing a generator for $\mathbb{Z}_{12}$ (a special case of an earlier result of Fiore--Satyendra \cite{fioresatyendra2005}).

\begin{thm} \label{thm:Recovering_Duality}
Suppose $(x,y,z)\in \mathbb{Z}_{12}^{\times 3}$ is such that $z-x$ is a generator of $\mathbb{Z}_{12}$. Then the restrictions of $\langle U,\; UV \rangle$ and $T/I$ to the $T/I$-orbit of $(x,y,z)$ are a Lewin dual pair of groups. That is, they each act simply transitively on the orbit and are centralizers of each other in the respective symmetric group.

If instead of $z-x$, the difference $z-y$ is a generator, then we have a similar statement for $\langle U, UW \rangle$ in place of $\langle U,\; UV \rangle$.
\end{thm}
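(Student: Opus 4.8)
The plan is to deduce the theorem from the Structure Theorem~\ref{thm:structure_of_J} and the commutativity Proposition~\ref{prop:Centralizer_of_J_in_Aff(3,Z12)}, in three steps. Write $s_0=(x,y,z)$ and let $O$ be the $T/I$-orbit of $s_0$. First I would check that $T/I$ acts simply transitively on $O$. Then I would show that $\langle U,UV\rangle$ stabilizes $O$ and also acts simply transitively on it. Finally I would invoke a general fact about commuting transitive permutation groups: two subgroups of $\text{Sym}(O)$ that commute elementwise and each act simply transitively are automatically mutual centralizers in $\text{Sym}(O)$; this is exactly the Lewin-dual-pair conclusion.

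For the first step, the hypothesis that $z-x$ is a unit of $\mathbb{Z}_{12}$ means $z-x$ is odd, so $z\ne x$ and hence $2z\ne 2x$. An element of $T/I$ fixing $s_0$ is either $T_0=\mathrm{id}$ or an inversion $I_n$ with $n=2x=2y=2z$, and the latter contradicts $2x\ne 2z$; so $T/I$ acts freely on $s_0$, $|O|=24$, and $T/I|_O$ is simply transitive. For the second step, each generator of $\calj$ moves $s_0$ inside $O$, since $U(s_0)=I_{x+y}(s_0)$, $V(s_0)=I_{y+z}(s_0)$, $W(s_0)=I_{z+x}(s_0)$; combined with the fact that every element of $\calj$ commutes with every element of $T/I$ (Proposition~\ref{prop:Centralizer_of_J_in_Aff(3,Z12)}), a short induction on word length gives that all of $\calj$, in particular $\langle U,UV\rangle$, stabilizes $O=(T/I)(s_0)$ setwise. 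Moreover $|\langle U,UV\rangle|=24$: by Theorem~\ref{thm:structure_of_J}~\ref{thm:structure_of_J:UV_and_UW_have_order_12} the subgroup $\langle UV\rangle$ is cyclic of order $12$, by the normal form $U\notin\langle UV\rangle$, and $U^{-1}(UV)U=(UV)^{-1}$ by Theorem~\ref{thm:structure_of_J}~\ref{U-conjugation_is_inversion}, so $\langle U,UV\rangle=\langle UV\rangle\rtimes\langle U\rangle$ is dihedral of order $24$.

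To finish the second step, since $|\langle U,UV\rangle|=24=|O|$ it suffices to show the action on $O$ is free, and since every nonidentity element of $\langle U,UV\rangle$ commutes with the transitive group $T/I|_O$, such an element fixing one point of $O$ would fix all of $O$; so it is enough to check that no nonidentity element of $\langle U,UV\rangle$ fixes $s_0$. Here Theorem~\ref{thm:structure_of_J}~\ref{item:action_of_normal_form} gives $(UV)^m(s_0)=s_0+m(z-x)$ and $U(UV)^m(s_0)=U(s_0)+m(z-x)$; imposing equality with $s_0$ and using that $z-x$ is a unit (so $m(z-x)\equiv0$ forces $m\equiv0$, and for the mode-reversing elements $2m(z-x)\equiv0$ forces $m\in\{0,6\}$, with $m=6$ then excluded by comparing the third coordinate and again using unit-ness of $z-x$) shows the stabilizer of $s_0$ is trivial. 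Hence $\langle U,UV\rangle|_O$ is simply transitive. Both $T/I|_O$ and $\langle U,UV\rangle|_O$ then act simply transitively and commute elementwise, and the standard argument (the centralizer in $\text{Sym}(O)$ of a transitive subgroup is semiregular, hence of order at most $|O|$, hence here exactly the $24$-element commuting subgroup) yields that each is the full centralizer of the other. The final assertion, with $z-y$ a unit and $\langle U,UW\rangle$, is proved identically after replacing $(UV)^m$ by $(UW)^n$ and $z-x$ by $z-y$, using $(UW)^n(s_0)=s_0+n(z-y)$ from Theorem~\ref{thm:structure_of_J}~\ref{item:action_of_normal_form}.

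I expect the only real obstacle to be the freeness check for the mode-reversing elements $U(UV)^m$, specifically ruling out $m=6$ by hand; this is precisely where the full strength of the hypothesis ``$z-x$ is a generator'' (rather than merely $z\ne x$) is needed, since a weaker hypothesis would leave a genuine fixed point and destroy simple transitivity. Everything else is bookkeeping layered on top of Theorem~\ref{thm:structure_of_J} and Proposition~\ref{prop:Centralizer_of_J_in_Aff(3,Z12)}.
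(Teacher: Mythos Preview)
Your proposal is correct and follows essentially the same route as the paper: establish simple transitivity of $T/I$ on the orbit, establish simple transitivity of $\langle U,UV\rangle$ on the same orbit via the Structure Theorem~\ref{thm:structure_of_J}, use Proposition~\ref{prop:Centralizer_of_J_in_Aff(3,Z12)} for commutativity, and conclude mutual centralization from the general fact about commuting simply transitive actions (the paper cites this as Proposition~3.8 of \cite{BerryFiore}; you supply the argument directly).

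Two small remarks. First, the paper takes the slightly shorter path of verifying \emph{transitivity} of $\langle U,UV\rangle$ rather than freeness: since $z-x$ is a unit, $(UV)^m(s_0)=s_0+m(z-x)$ already runs through all twelve transpositions of $s_0$, and $U$ supplies an inversion, so the whole $T/I$-orbit is reached; simple transitivity then follows from $|\langle U,UV\rangle|=24=|O|$ by orbit--stabilizer. This avoids the case analysis on mode-reversing elements entirely. Second, in your freeness argument you exclude $m=6$ but should also exclude $m=0$ (i.e.\ $U$ itself) among the mode-reversing elements; the same comparison of the first and third coordinates of $U(s_0)=s_0$ forces $2(z-x)=0$, which is impossible since $z-x$ is odd, so the fix is immediate.
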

\begin{proof}
Since $z-x$ is a generator of $\mathbb{Z}_{12}$, the $T/I$-orbit has 24 elements and simple transitivity of the 24-element $T/I$-group follows from the Orbit-Stabilizer Theorem. From Structure Theorem~\ref{thm:structure_of_J}~\ref{item:action_of_normal_form} we see that $\langle U,\; UV \rangle$ also acts transitively on the $T/I$-orbit and has order 24, so $\langle U,\; UV \rangle$ also acts simply transitively. By Propositions~\ref{prop:Centralizer_of_J_in_GL3} and \ref{prop:Centralizer_of_J_in_Aff(3,Z12)}, $\langle U,\; UV \rangle$ and $T/I$ commute. By Proposition~3.8 of \cite{BerryFiore}, simple transitivity on a finite set and commutativity together imply that the groups centralize one another.
\end{proof}

\begin{examp} \leavevmode \smallskip
\begin{enumerate}
\item
If we take $(x,y,z)=(0,4,7)$ in Theorem~\ref{thm:Recovering_Duality}, then the restriction of $\langle U,\; UV \rangle$ is the $PLR$-group as recalled in Section~\ref{sec:Recollection_of_PLR} and Theorem~\ref{thm:Recovering_Duality} recovers the familiar $PLR$-$T/I$-Duality. The transformation $(UV)^m$ is the {\it Schritt} $Q_{7m}$ and the transformation $U(UV)^m$ is a {\it Wechsel}.
\item
If we take $(x,y,z)=(0,4,1)$, then we obtain the {\it jet-shark} dual pair, and the Sub Dual Group Theorem of Fiore--Noll \cite{fiorenoll2011} applies to construct the jet-shark sub dual pair in the analysis of Schoenberg, String Quartet in $D$ minor, op. 7, in \cite{fiorenollsatyendraSchoenberg}.
\item
The hypothesis that $z-x$ is a generator in Theorem~\ref{thm:Recovering_Duality} is necessary. A counterexample is presented by $(x,y,z)=(0,4,10)$, which also plays a role in the Schoenberg analysis of \cite{fiorenollsatyendraSchoenberg}. In the case of $(x,y,z)=(0,4,10)$, the restriction of $\langle U,\; UV \rangle$ to the $T/I$-orbit of $(0,4,10)$ does not have 24 elements, namely the restrictions of $UV$ and $UV^7$ are the same. We have
$$UV(0,4,10)=(0,4,10)+10=(0,4,10)+7\cdot 10=UV^7(0,4,10)$$
$$UV(4,0,6)=(4,0,6)+2=(4,0,6)+7\cdot 2=UV^7(4,0,6)$$
and similarly on the translates. Notice also that the restriction of $\langle U,\; UV \rangle$ cannot possibly act transitively, as it preserves odd and even.
\item
Theorem~\ref{thm:Recovering_Duality} is a special case of Theorem~7.1 of Fiore--Satyendra \cite{fioresatyendra2005}, which treats $n$-tuples in $\mathbb{Z}_m$ that satisfy a ``tritone condition.''
\end{enumerate}
\end{examp}

\section{A Linear Representation of Uniform Triadic Transformations} \label{sec:A_Linear_Rep_of_the_UTTs}
In this section we construct a linear representation $\rho\co \calu \to GL(3, \mathbb{Z}_{12})$ of Julian Hook's group $\calu$ of uniform triadic transformations, characterize the image as the subgroup $\calh$ of $\Sigma_3 \ltimes \calj $ which fixes the set of triads in (non-dualistic) root position, and determine the structure of $\calh$, along with two normal forms of its elements. Hook's theory of uniform triadic transformations \cite{hookUTT2002} is an important chapter of well-established transformational music theory.

The representation $\rho$ sends transformations of {\it abstract triads} to linear transformations of root position triadic voicings. In Section~\ref{subsec:root_pos_and_dualisitic_root_pos}, we clarify the difference between {\it non-dualistic} root position and {\it dualistic} root position, while in Section~\ref{subsec:UTT_review} we recall how Hook treats the consonant triads as abstract entities, parametrized by their roots and their mode.  Importantly, we also recall how he studies certain rigid transformations of abstract consonant triads called {\it uniform triadic transformations}, which together form his group denoted $\calu$. The representation $\rho\co \mathcal{U} \to GL(3,\mathbb{Z}_{12})$ is defined in Section~\ref{subsec:defn_of_rho}: the consonant triads are represented in terms of their root position voicings $(x, y, z) \in \mathbb{Z}_{12}^{\times 3}$ and the uniform triadic transformations are realized as linear transformations that act on (non-dualistic) root position voicings exactly as they would on abstract consonant triads. Properties of the representation $\rho$ are investigated in Sections~\ref{subsec:Hook_Group_And_Its_Structure} and \ref{subsec:Hook_Group_and_UTTs}. As a final result we will prove in Section~\ref{subsec:Hook_Group_and_UTTs} that $\rho(\calu)=\calh$ is the wreath product $\Sigma_2 \lbag \mathbb{Z}_{12}$ and we will select new generators for $\calh$ that make $\rho$ apparent.

Most of the time, we will investigate the structure of $\calh$ as a subgroup of $\Sigma_3 \ltimes \calj $, instead of via $\calu$ and the homomorphism $\rho$.
Working with an internal perspective of the group $\Sigma_3 \ltimes \calj$ provides us with more flexibility in the conceptualization of these transformations, and provides proofs independent of knowledge of $\calu$. Namely, all the results rely on Structure Theorem~\ref{thm:structure_of_J} about $\calj$.

\subsection{Root Position and Dualistic Root Position} \label{subsec:root_pos_and_dualisitic_root_pos} \leavevmode \smallskip

Some explanation of the difference between non-dualistic root position and dualistic root position is in order. We use the unmodified term {\it root position} to mean {\it non-dualistic root position} (hence the optional modifier ``non-dualistic'' in parentheses in the foregoing paragraphs). The triple $(0, 4, 7) \in \mathbb{Z}_{12}^{\times 3}$ represents the {\it root position $C$-major triad}, which on the musical staff has tone $C$ in the lowest position, then the very next $E$ tone above that, and the very next $G$ tone above that. The vector $(0, 3, 7) \in \mathbb{Z}_{12}^{\times 3}$ represents the {\it root position $c$-minor triad}, which on the musical staff has tone $C$ in the lowest position, then the very next $E\flat$ tone above that, and the very next $G$ tone above that. Unfortunately, when written as column vectors $\left(\begin{array}{c} 0 \\ 4 \\ 7 \end{array} \right)$ and $\left(\begin{array}{c} 0 \\ 3 \\ 7 \end{array} \right)$, these root position triad representations appear in the opposite orientation of actual root position chords on the musical staff. Nevertheless, we prefer to use this encoding, despite its opposite orientation, because it is compatible with the mathematical transpose of vectors and because we find it more natural to have the root listed as the first note we read. {\it Root position major triads} are of the form $(r,r+4,r+7)$ and {\it root position minor triads} are of the form $(r,r+3,r+7)$.

Dualistic root position, on the other hand, reverses {\it in the minor triads only} the positions of the root and fifth in its notation. For instance, the triple $(0, 4, 7) \in \mathbb{Z}_{12}^{\times 3}$ represents both the {\it dualistic root position $C$-major triad} and the {\it root position $C$-major triad}, while the vector $(7, 3, 0) \in \mathbb{Z}_{12}^{\times 3}$ represents the {\it dualistic root position $c$-minor triad}. The encoding $(7, 3, 0)$ means that we read the root position $c$-minor on the staff from top note to bottom note, in other words, $(7, 3, 0)$ signifies literally the same musical chord as the root position encoding $(0, 3, 7)$: on the musical staff it has tone $C$ in the lowest position, then the very next $E\flat$ tone above that, and the very next $G$ tone above that. It does {\it not} mean a reordering of the actual tones when we say {\it dualistic root position} (the reordered notation could also signify reordered tones, as we do elsewhere in the article without the term dualistic root position).  {\it Dualistic root position major triads} are of the form $(r,r+4,r+7)$ and {\it dualistic root position minor triads} are of the form $(r+7,r+3,r)$.

The $\mathcal{J}$ orbit of $(0, 4, 7)$ is the set of all 24 dualistic root position triads, denoted by \texttt{DualRootPosTriads}. It is easy to see that the group $\mathcal{J}$ is the set stabilizer of \texttt{DualRootPosTriads} within the group $\Sigma_3 \ltimes \calj$. In Sections~\ref{subsec:Hook_Group_And_Its_Structure} and \ref{subsec:Hook_Group_and_UTTs} we investigate the analogous situation for the root position triads: $\calh$ is by definition the set-wise stabilizer in $\Sigma_3 \ltimes \calj$ of the root position triads, and the $\calh$ orbit of $(0, 4, 7)$ is the set of all 24 root position triads.

\subsection{Uniform Triadic Transformations of Abstract Triads} \label{subsec:UTT_review} \leavevmode \smallskip

We now revisit some anchor points from \cite{hookUTT2002} and stay close to the notations introduced there. Hook encodes a consonant triad abstractly as a pair $(r,\mu)$ consisting of its (non-dualistic) root pitch class $r \in \mathbb{Z}_{12}$ and a parity $\mu \in \{+,-\}$ to indicate major or minor. For instance,
$$\begin{array}{rrrr}
C = (0, +), \;\; & C \sharp = (1, +), \;\;& \dots,\;\; & B = (11, +), \\ c = (0, -), \;\; & c\sharp = (1, -), \;\; & \dots,\;\; & b = (11, -).\end{array}$$
Hook denotes the set of these abstractly represented consonant triads as
$$\Gamma:= \mathbb{Z}_{12} \times \{+, -\}.$$

A {\it uniform triadic transformation} is a function $\langle s, m, n \rangle\colon \Gamma \to \Gamma$ which translates the roots of input major chords by $m$, translates the roots of input minor chords by $n$, and preserves (respectively reverses) the input parity when the sign $s$ is $+$ (respectively $-$). Formally, we have:
$$\langle +, m, n \rangle(r,+)= (r+m,+) \hspace{1in} \langle +, m, n \rangle(r,-)= (r+n,-)\phantom{.}$$
$$\langle -, m, n \rangle(r,+)= (r+m,-) \hspace{1in} \langle -, m, n \rangle(r,-)= (r+n,+).$$

Hook denotes by $\mathcal{U}$ the {\it group of uniform triadic transformations}.
\begin{equation} \label{equ:U_definition}
\mathcal{U} := \left \{ \left < s, m, n \right > \, | \, s \in \{ +, - \} \text{ and } m, n \in \mathbb{Z}_{12}  \right \}
\end{equation}
The group operation is function composition and satisfies the following formulas.\footnote{We are using the usual order of function composition in this article, namely $g \circ f$ means to do $f$ first. In \cite{hookUTT2002}, Hook uses the opposite convention, so our equivalent formulas are slightly different.}
$$\langle +,p,q \rangle\circ\langle +, m, n \rangle = \langle +,m+p,n+q \rangle \hspace{.5in}
\langle +,p,q \rangle\circ\langle -, m, n \rangle=\langle -, m+q, n+p \rangle$$
$$\langle -,p,q \rangle\circ\langle +, m, n \rangle=\langle -,m+p,n+q \rangle \hspace{.5in}
\langle -,p,q \rangle\circ\langle -, m, n \rangle= \langle +,m+q,n+p\rangle$$
From these formulas we see that $\langle +, 1,0\rangle$ and $\langle +, 0, 1\rangle$ generate two commuting copies of $\mathbb{Z}_{12}$, and $\langle -,0,0 \rangle$ generates a (multiplicative) copy of $\mathbb{Z}_2$, and that conjugation
$$\langle -,0,0 \rangle^{-1} \circ \langle +,p,q \rangle \circ \langle -,0,0 \rangle=\langle +,q,p \rangle$$
interchanges the two copies of $\mathbb{Z}_{12}$. Hence, $\calu$ is isomorphic to the semi-direct product $\mathbb{Z}_{2} \ltimes (\mathbb{Z}_{12} \times \mathbb{Z}_{12})$ where $\mathbb{Z}_{2}$ permutes the two copies of $\mathbb{Z}_{12}$. In other words, $\mathcal{U}$ is isomorphic to the wreath product $\mathbb{Z}_{12} \lbag\mathbb{Z}_{2}$. From these isomorphisms, or even directly from equation \eqref{equ:U_definition}, we see that the total number of uniform triadic transformations is $\vert\mathcal{U}\vert=288 = 2 \cdot 12 \cdot 12$.

\subsection{A Representation of Uniform Triadic Transformations} \label{subsec:defn_of_rho} \leavevmode \smallskip

We next derive a representation $\rho\co \calu \to GL(3,\mathbb{Z}_{12})$ which acts on major and minor triads {\it in root position} exactly as $\calu$ does on abstract triads. First notice that the root position triads $C$-major, $c$-minor, and $C\sharp$-major
\begin{equation} \label{equ:root_position_basis}
\left(\begin{array}{c} 0 \\ 4 \\ 7 \end{array} \right) \hspace{.25in}
\left(\begin{array}{c} 0 \\ 3 \\ 7 \end{array} \right) \hspace{.25in}
\left(\begin{array}{c} 1 \\ 5 \\ 8 \end{array} \right)
\end{equation}
are clearly linearly independent over $\mathbb{Z}_{12}$. They also generate $\mathbb{Z}_{12}^{\times 3}$, since the standard basis can be expressed as linear combinations of them.
\begin{equation} \label{equ:expressing_standard_basis_via_chords}
\aligned
\left(\begin{array}{c} 1 \\ 0 \\ 0 \end{array} \right) &= 7\left(\begin{array}{c} 0 \\ 4 \\ 7 \end{array} \right)+
9\left(\begin{array}{c} 0 \\ 3 \\ 7 \end{array} \right) + \left(\begin{array}{c} 1 \\ 5 \\ 8 \end{array} \right) \\
\left(\begin{array}{c} 0 \\ 1 \\ 0 \end{array} \right) &= \phantom{7}\left(\begin{array}{c} 0 \\ 4 \\ 7 \end{array} \right)-
\phantom{9}\left(\begin{array}{c} 0 \\ 3 \\ 7 \end{array} \right)  \\
\left(\begin{array}{c} 0 \\ 0 \\ 1 \end{array} \right) &= 3\left(\begin{array}{c} 0 \\ 4 \\ 7 \end{array} \right)+
4\left(\begin{array}{c} 0 \\ 3 \\ 7 \end{array} \right)
\endaligned
\end{equation}
Since \eqref{equ:root_position_basis} is a basis, any linear transformation is uniquely determined by its action on these vectors.

In particular, $\rho\langle +, m, n\rangle$ is uniquely determined by the requirement that it adds $m$ to the $C$-major chord, adds $n$ to the $c$-minor chord, and adds $m$ to the $C\sharp$-major chord. Evaluation of $\rho\langle +, m, n \rangle$ on the standard basis via the formulas \eqref{equ:expressing_standard_basis_via_chords} computes the columns as follows.
\begin{equation} \label{equ:rho_def_part_1}
\begin{array}{lll}
\rho\left<+,m,n \right> & = & \left( \begin{array}{rrr} 1 - 4 m - 3 n & m - n & 3 m + 4 n  \\ - 4 m -3 n  & 1 + m - n & 3 m + 4 n  \\ - 4 m -3 n & m - n & 1 + 3 m + 4 n  \end{array} \right )
\end{array}
\end{equation}
Fortunately, $\rho\left<+,m,n \right>$ behaves as expected on the other root position triads, as we verify with direct computation.
\begin{equation} \label{equ:rho_does_as_should_on_+}
\small
\rho\left<+,m,n \right>\cdot \left(\begin{array}{c} r \\ r+4 \\ r+7\end{array} \right) = \left(\begin{array}{c} r+m \\ r+4+m \\ r+7+m\end{array}  \right), \hspace{.1in}
\rho\left<+,m,n \right>\cdot \left(\begin{array}{c} r \\ r+3 \\ r+7\end{array} \right) = \left(\begin{array}{c} r+n \\ r+3+n \\ r+7+n \end{array}  \right)
\end{equation}

For the mode-reversing transformations, $\rho\langle -, m, n\rangle$ is uniquely determined by the requirement that it adds $m$ to the $C$-major chord, adds $n$ to the $c$-minor chord, and adds $m$ to the $C\sharp$-major chord, {\it and then reverses the mode} for all three. Evaluation of $\rho\langle -, m, n \rangle$ on the standard basis via the formulas \eqref{equ:expressing_standard_basis_via_chords} computes the columns as follows.
\begin{equation} \label{equ:rho_def_part_2}
\begin{array}{lll}
\rho\left<-,m,n \right> & = & \left( \begin{array}{rrr} 1 - 3 m - 4 n & - m + n & 4 m + 3 n  \\  1 - 3 m - 4 n  & -1 - m + n  & 1 + 4 m + 3 n  \\ - 3 m -4 n & - m + n & 1 + 4 m + 3 n  \end{array} \right )
\end{array}
\end{equation}
Fortunately, $\rho\left<-,m,n \right>$ behaves as expected on the other root position triads, as we verify with direct computation.
\begin{equation} \label{equ:rho_does_as_should_on_-}
\small
\rho\left<-,m,n \right>\cdot \left(\begin{array}{c} r \\ r+4 \\ r+7\end{array} \right) = \left(\begin{array}{c} r+m \\ r+3+m \\ r+7+m\end{array}  \right), \hspace{.1in}
\rho\left<-,m,n \right>\cdot \left(\begin{array}{c} r \\ r+3 \\ r+7\end{array} \right) = \left(\begin{array}{c} r+n \\ r+4+n \\ n+r+7\end{array}  \right)
\end{equation}

The representation $\rho\co \calu \to GL(3,\mathbb{Z}_{12})$ is defined by the formulas \eqref{equ:rho_def_part_1} and \eqref{equ:rho_def_part_2}. The map $\rho$ is an injection because the image elements are all different on the basis \eqref{equ:root_position_basis}, as we see from \eqref{equ:rho_does_as_should_on_+} and \eqref{equ:rho_does_as_should_on_-} without any more computation. The map $\rho$ is a homomorphism because $\rho$ acts on major and minor
triads in root position exactly as $\calu$ does on abstract triads, and because the set of major and minor triads in root position contains the basis \eqref{equ:root_position_basis} (forming matrix representations with respect to selected bases is a group homomorphism).

\subsection{The Hook Group and its Structure} \label{subsec:Hook_Group_And_Its_Structure} \leavevmode \smallskip

The {\it Hook group} $\calh$ is the subgroup of $\Sigma_3 \ltimes \calj$ that maps {\it root position} triads to {\it root position} triads. It is analogous to $\calj$ in that $\calj$ is the subgroup of $\Sigma_3 \ltimes \calj$ that maps {\it dualistic root position} triads to {\it dualistic root position} triads. In other words, $\calh$ is the set-wise stabilizer in $\Sigma_3 \ltimes \calj$ of \texttt{RootPosTriads}, while $\calj$ is the set-wise stabilizer in $\Sigma_3 \ltimes \calj$ of \texttt{DualRootPosTriads}, recall the notation and discussion from Section~\ref{subsec:Triadic_Orbits}. For the major triads, root position and dualistic root position are the same. However, for the minor triads, root position and dualistic root position are related via the permutation $(1\;3)$. From this relationship, we anticipate $\calh=\calj^+ \; \bigsqcup \; (1\;3)\,\calj^-$, similar to $\calj=\calj^+ \; \bigsqcup \; \calj^-$.

\begin{prop}[Decomposition of $\calh$ and its Normal Form] \label{prop:H_decomposition} \leavevmode
\begin{enumerate}
\item \label{prop:H_decomposition:i}
The subgroup of mode-preserving operations in $\calh$, denoted $\calh^+:=\calh \cap \Sigma_3\calj^+$, is equal to $\calj^+$. \item \label{prop:H_decomposition:ii}
The subgroup of mode-reversing operations in $\calh$, denoted $\calh^-:=\calh \cap \Sigma_3\calj^-$, is equal to $(1\;3)\calj^-$.
\item \label{prop:H_decomposition:iii}
The group $\mathcal{H}$ has $288$ elements and consists of the $144$ mode-preserving transformations $(UV)^m(UW)^n$ and the $144$ mode-reversing transformations \\ $(1\;3) U(UV)^m(UW)^n$.
$$\calh=\calj^+ \; \bigsqcup \; (1\;3)\,\calj^-$$
\end{enumerate}
\end{prop}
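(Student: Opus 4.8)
The plan is to prove the three parts at once: first establish the inclusions $\calj^+\subseteq\calh$ and $(1\;3)\calj^-\subseteq\calh$ by a short explicit computation, then obtain the reverse inclusions from the fact that the root of a consonant triad is uniquely recoverable from its underlying pitch‑class set. For $\calj^+\subseteq\calh$: by Theorem~\ref{thm:structure_of_J}~\ref{item:action_of_normal_form} the map $(UV)^m(UW)^n$ merely adds a constant to each coordinate, so it sends the root position major voicing $(r,r+4,r+7)$ to $(r',r'+4,r'+7)$ and the root position minor voicing $(r,r+3,r+7)$ to $(r',r'+3,r'+7)$. For $(1\;3)\calj^-\subseteq\calh$: using the same part of the Structure Theorem together with $U(x,y,z)=(y,x,-z+x+y)$, I would evaluate $U(UV)^m(UW)^n$ on a root position triad. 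On $(r,r+4,r+7)$ it yields $(c+4,\,c,\,c-3)$ with $c=r+7m+3n$, whose underlying set $\{c-3,c,c+4\}$ is the minor triad with root $c-3$; thus the output is $(1\;3)$ applied to the \emph{root position} minor voicing $(c-3,c,c+4)$. Symmetrically, on $(r,r+3,r+7)$ it yields $(d+3,\,d,\,d-4)$ with $d=r+7m+4n$, which is $(1\;3)$ applied to the root position major voicing $(d-4,d,d+3)$. Hence $(1\;3)U(UV)^m(UW)^n$ carries root position triads to root position triads.

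For the reverse inclusions I would use the partition $\Sigma_3\ltimes\calj=\Sigma_3\calj^+\sqcup\Sigma_3\calj^-$, which gives $\calh=\calh^+\sqcup\calh^-$ with $\calh^\pm=\calh\cap\Sigma_3\calj^\pm$. For part~\ref{prop:H_decomposition:i}, an element $h\in\calh^+$ has the unique form $\sigma(UV)^m(UW)^n$ with $\sigma\in\Sigma_3$; being mode‑preserving, $h(0,4,7)$ is a root position major voicing, and direct evaluation gives $h(0,4,7)=\sigma(c,c+4,c+7)$ with $c=7m+3n$. Since the root of a consonant triad is the unique element $s$ of its set for which $s+7$ and $s+4$ (major) or $s+3$ (minor) also lie in the set, the only root position major ordering of $\{c,c+4,c+7\}$ is $(c,c+4,c+7)$; so $\sigma$ fixes this tuple of distinct entries, forcing $\sigma=\text{Id}$ and $h\in\calj^+$. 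Together with the first paragraph this gives $\calh^+=\calj^+$. For part~\ref{prop:H_decomposition:ii}, an element $h\in\calh^-$ has the unique form $\sigma U(UV)^m(UW)^n$, is mode‑reversing, and so $h(0,4,7)=\sigma(c+4,c,c-3)$ must be the unique root position minor ordering $(c-3,c,c+4)$ of $\{c-3,c,c+4\}$; this forces $\sigma=(1\;3)$, so $h\in(1\;3)\calj^-$ and $\calh^-=(1\;3)\calj^-$. Part~\ref{prop:H_decomposition:iii} then follows: $\calh=\calj^+\sqcup(1\;3)\calj^-$, and since $|\calj^+|=|\calj^-|=144$ (as $\langle UV,UW\rangle\cong\mathbb{Z}_{12}\times\mathbb{Z}_{12}$ and $\calj^-=U\calj^+$) and the two pieces are disjoint (their permutation parts are $\text{Id}$ and $(1\;3)$), we get $|\calh|=288$.

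I expect the main obstacle to be the computation in the first paragraph, and the point to be careful about is that the naive composite $U(UV)^m(UW)^n$ returns the opposite‑mode triad written in its \emph{dualistic} root position, so one must compose with the transposition $(1\;3)$ to land in genuine root position — this is precisely the $\calj=\calj^+\sqcup\calj^-$ versus $\calh=\calj^+\sqcup(1\;3)\calj^-$ contrast anticipated just before the statement. A shortcut replacing the second paragraph would be a cardinality squeeze: by the first paragraph $\calj^+\sqcup(1\;3)\calj^-\subseteq\calh$, this subset already has $288$ elements and moves $(0,4,7)$ onto all $24$ root position triads, while the stabilizer of $(0,4,7)$ in the order‑$1728$ group $\Sigma_3\ltimes\calj$ has order $1728/144=12$ (the orbit being all of \texttt{Triads}), so orbit--stabilizer forces $|\calh|\le24\cdot12=288$, hence equality; parts~\ref{prop:H_decomposition:i} and~\ref{prop:H_decomposition:ii} then follow by intersecting $\calh=\calj^+\sqcup(1\;3)\calj^-$ with the mode‑preserving subgroup $\Sigma_3\calj^+$.
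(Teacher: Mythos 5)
Your proposal is correct and follows essentially the same route as the paper: both rest on the normal form $\sigma\,U^k(UV)^m(UW)^n$ for elements of $\Sigma_3\ltimes\calj$ and on Theorem~\ref{thm:structure_of_J}~\ref{item:action_of_normal_form}, with membership in $\calh$ forcing the permutation part to be $\mathrm{Id}$ in the mode-preserving case and $(1\;3)$ in the mode-reversing case; you merely make explicit the computations (the images $(c+4,c,c-3)$, $(d+3,d,d-4)$ and the uniqueness of the root ordering) that the paper summarizes via ``preserves dualistic root position.'' Your optional orbit--stabilizer squeeze is a harmless alternative for the cardinality count, but the substance of the argument coincides with the paper's proof.
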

\begin{proof}
Recall from Theorem~\ref{thm:structure_of_J}~\ref{item:action_of_normal_form} that $(UV)^m(UW)^n$ adds a constant to an input, so that $(UV)^m(UW)^n$ preserves both triad mode {\it and} any triad position. The nontrivial permutations in $\Sigma_3$, on the other hand, preserve triad mode but nontrivially change triad position.
\begin{enumerate}
\item
From the foregoing, an arbitrary element $\sigma(UV)^m(UW)^n \in \Sigma_3\calj^+$ is in $\calh$ if and only if $\sigma$ is the identity.
\item
Recall that $U$ reverses triad mode but preserves {\it dualistic} root position, so the second formula of Theorem~\ref{thm:structure_of_J}~\ref{item:action_of_normal_form} shows that $U(UV)^m(UW)^n$ reverses triad mode and preserves {\it dualistic} root position. Hence, an arbitrary element $\sigma U(UV)^m(UW)^n \in \Sigma_3\calj^-$ is in $\calh$ if and only if $\sigma$ is $(1\;3)$.
\item
This is an immediate consequence of \ref{prop:H_decomposition:i} and \ref{prop:H_decomposition:ii}. The set $(1\;3) \calj^-$ has 144 elements because we have left cancellation in any group.  $$(1\;3)j=(1\;3)j' \hspace{.5in} \Longrightarrow \hspace{.5in} j=j'$$
\end{enumerate}
\end{proof}

\begin{prop} \label{prop:H_two_generators}
The Hook group $\mathcal{H}$ is generated by the two elements $(1\;3)U$ and $(1\;3)W$.
\end{prop}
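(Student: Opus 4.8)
The plan is to let $G:=\langle (1\;3)U,\ (1\;3)W\rangle$ and prove $G=\calh$ via the two inclusions. The inclusion $G\subseteq\calh$ is immediate: since $W=U(UW)\in U\calj^+=\calj^-$ and also $U\in\calj^-$, both generators $(1\;3)U$ and $(1\;3)W$ lie in $(1\;3)\calj^-$, which is contained in $\calh$ by Proposition~\ref{prop:H_decomposition}~\ref{prop:H_decomposition:iii}. The substance is therefore the reverse inclusion, and I would reduce it to the single claim $\calj^+\subseteq G$; once that is in hand, $(1\;3)U\in G$ forces $(1\;3)U\,\calj^+=(1\;3)\bigl(U\calj^+\bigr)=(1\;3)\calj^-\subseteq G$, and then $\calh=\calj^+\sqcup(1\;3)\calj^-\subseteq G$, again by Proposition~\ref{prop:H_decomposition}~\ref{prop:H_decomposition:iii}.

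To get $\calj^+\subseteq G$ I would compute the two products of the generators, using Proposition~\ref{prop:Sigma3_acts_on_J} to slide $(1\;3)$ past the $J$-operators. Since $(1\;3)$ swaps $1\leftrightarrow 3$ and fixes $2$, it conjugates $U=J^{1,2}$ to $J^{3,2}=J^{2,3}=V$ and fixes $W=J^{3,1}$, i.e. $(1\;3)U=V(1\;3)$ and $(1\;3)W=W(1\;3)$ (and, symmetrically, $(1\;3)V(1\;3)=U$). Cancelling $(1\;3)^2=\mathrm{Id}$ then gives
$$(1\;3)U\cdot(1\;3)W=VW,\qquad (1\;3)W\cdot(1\;3)U=WU.$$
Using $U^2=V^2=W^2=\mathrm{Id}$ from Theorem~\ref{thm:structure_of_J}~\ref{thm:structure_of_J:U_V_W_have_order_2}, I rewrite these in the normal form of Theorem~\ref{thm:structure_of_J}~\ref{thm:structure_of_J:semi-direct_product_form}: namely $WU=(UW)^{-1}$ and $VW=(VU)(UW)=(UV)^{-1}(UW)$, both lying in $\calj^+$. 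Hence $G$ contains $(UW)^{-1}$, so $UW\in G$; and then $(UV)^{-1}=(VW)(WU)\in G$, so $UV\in G$. Therefore $\calj^+=\langle UV,UW\rangle\subseteq G$, which completes the argument as outlined above.

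I do not expect a genuine obstacle here; the whole proof is driven by Theorem~\ref{thm:structure_of_J} and Proposition~\ref{prop:Sigma3_acts_on_J}. The one place requiring care is bookkeeping the conjugation relations between $(1\;3)$ and $U,V,W$ — in particular not conflating $(1\;3)U$ with $U(1\;3)$ — and keeping track of which coset of $\calj^+$ each product $VW$, $WU$ lands in, so that the final reconstruction $\calh=\calj^+\sqcup(1\;3)\calj^-$ goes through cleanly.
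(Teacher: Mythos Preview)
Your proof is correct and follows essentially the same strategy as the paper: show both generators lie in $\calh$ via the decomposition $\calh=\calj^+\sqcup(1\;3)\calj^-$, then recover $UV$ and $UW$ from products of the generators using the conjugation relations $(1\;3)U=V(1\;3)$ and $(1\;3)W=W(1\;3)$. The only cosmetic difference is that the paper squares the first generator to get $(1\;3)U\cdot(1\;3)U=VU=(UV)^{-1}$ directly, whereas you take the two mixed products $(1\;3)U\cdot(1\;3)W=VW$ and $(1\;3)W\cdot(1\;3)U=WU$ and combine them; either route lands in $\langle UV,UW\rangle=\calj^+$ with the same amount of work.
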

\begin{proof}
The two elements $(1\;3)U$ and $(1\;3)W$ are in $\calh$, since
$$(1\;3)U=(1\;3)U(UV)^0(UW)^0 \hspace{1in} (1\;3)W=(1\;3)U(UV)^0(UW)^1.$$

Conversely, any $(UV)^m(UW)^n\in \calh^+$ can be expressed in terms of $(1\;3)U$ and $(1\;3)W$ because
$$(1\;3)U\cdot(1\;3)U=VU=(UV)^{-1}$$
by Proposition~\ref{prop:Sigma3_acts_on_J}~\ref{prop:H_decomposition:ii}, and
$$(1\;3)W\cdot(1\;3)U=WU=(UW)^{-1}$$
also by Proposition~\ref{prop:Sigma3_acts_on_J}~\ref{prop:H_decomposition:ii}. Moreover, any $(1\;3) U(UV)^m(UW)^n \in \calh^-$, can be obtained from the previous two equations and $(1\;3) U$.

Thus, $\big\langle (1\;3)U, (1\;3)W \big\rangle = \calh$.
\end{proof}

\begin{rmk}[Matrices for Generators $(1\;3)U$ and $(1\;3)W$ of Hook Group $\mathcal{H}$]
The matrices $(1\;3)U$ and $(1\;3)W$ are obtained from the matrices $U$ and $W$ in Section~\ref{subsec:Definition_of_J}
by exchanging their top and bottom rows.
$$(1\;3)U = \left( \begin{array}{ccc} 1 & 1 & {-1} \\ 1 & 0 & 0 \\ 0 & 1 & 0 \end{array} \right ) \hspace{1in}
(1\;3)W = \left( \begin{array}{ccc} 1 & 0 & 0  \\ 1 & {-1} & 1 \\ 0 & 0 & 1 \end{array}  \right )$$
\end{rmk}

In preparation for two propositions about the structure of the Hook group, we inspect the orders of $(1\;3)U$, $(1\;3)W$, and  $(1\;3)W\cdot(1\;3)U$.

\begin{prop}[Orders of Generators $(1\;3)U$ and $(1\;3)W$ of Hook Group $\mathcal{H}$] \label{prop:orders_of_generators} \leavevmode
\begin{enumerate}
\item \label{prop:orders_of_generators:powers_of_(13)U}
The even powers of $(1\;3)U$ are
$$\big((1\;3)U\big)^{2m}=\Big((1\;3)U\cdot(1\;3)U\Big)^{m}=\big(VU\big)^{m}=\big(UV\big)^{-m},$$
while the odd powers are $$\big((1\;3)U\big)^{2m+1}=\big((1\;3)U\big)\big(UV\big)^{-m}.$$
Consequently, $\text{\rm order}\;(1\;3)U=24$.
\item
$\text{\rm order}\;(1\;3)W=2$.
\item \label{prop:orders_of_generators:product}
The product $(1\;3)W\cdot(1\;3)U$ equals $(UW)^{-1}$, so consequently \\
$\text{\rm order}\;\Big((1\;3)W\cdot(1\;3)U\Big)=12.$
\end{enumerate}
\end{prop}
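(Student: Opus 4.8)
The plan is to reduce all three statements to two inputs already established: the conjugation rules of Proposition~\ref{prop:Sigma3_acts_on_J} and the fact that $UV$ and $UW$ each have order $12$ (Theorem~\ref{thm:structure_of_J}~\ref{thm:structure_of_J:UV_and_UW_have_order_12}). The crucial observation is that conjugation by the transposition $(1\;3)$ permutes the generators: since $J^{r,s}$ is symmetric in $r,s$ and $(1\;3)$ swaps $1$ and $3$ while fixing $2$, Proposition~\ref{prop:Sigma3_acts_on_J}~\ref{prop:Sigma3_acts_on_J:ii} gives $(1\;3)\,U\,(1\;3)=J^{3,2}=V$ and $(1\;3)\,W\,(1\;3)=J^{1,3}=W$ (using that $(1\;3)$ is an involution, so $(1\;3)^{-1}=(1\;3)$). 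Everything else is bookkeeping with these two identities together with $U^2=V^2=W^2=\mathrm{Id}$.

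For part~\ref{prop:orders_of_generators:powers_of_(13)U}, I would first square: $\big((1\;3)U\big)^2=\big((1\;3)U(1\;3)\big)U=VU=(UV)^{-1}$, the last step because $U$ and $V$ are involutions. A one-line induction then gives $\big((1\;3)U\big)^{2m}=(VU)^m=(UV)^{-m}$ and hence $\big((1\;3)U\big)^{2m+1}=(1\;3)U\cdot(UV)^{-m}$. For the order, since $UV$ has order $12$, the smallest positive even exponent yielding the identity is $24$; and no odd exponent can yield the identity, because $(1\;3)U(UV)^{-m}$ has nontrivial permutation component $(1\;3)$ whereas the identity has trivial permutation component. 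This last point is exactly where one must be careful, and it is handled by Proposition~\ref{prop:Sigma3_J_is_SemiDirectProduct}: in the genuine semi-direct product $\Sigma_3\ltimes\calj$ one has $\Sigma_3\cap\calj=\{\mathrm{Id}\}$, so an element $\sigma j$ with $\sigma\in\Sigma_3$ and $j\in\calj$ equals the identity only when $\sigma=\mathrm{Id}$. Therefore $\mathrm{order}\,(1\;3)U=24$.

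Part~\ref{prop:orders_of_generators:product} and the remaining claim ($\mathrm{order}\,(1\;3)W=2$) follow by the same pattern. Using $(1\;3)W(1\;3)=W$ and $W^2=\mathrm{Id}$ we get $\big((1\;3)W\big)^2=\big((1\;3)W(1\;3)\big)W=W\cdot W=\mathrm{Id}$, and $(1\;3)W\neq\mathrm{Id}$ again by the permutation-component argument, so its order is $2$. Finally $(1\;3)W\cdot(1\;3)U=\big((1\;3)W(1\;3)\big)U=WU=(UW)^{-1}$, since $U$ and $W$ are involutions; as $UW$ has order $12$ by Theorem~\ref{thm:structure_of_J}~\ref{thm:structure_of_J:UV_and_UW_have_order_12}, so does $(UW)^{-1}$, giving $\mathrm{order}\big((1\;3)W\cdot(1\;3)U\big)=12$.

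The only step requiring any thought — the ``main obstacle'', modest as it is — is ruling out that some odd power of $(1\;3)U$ secretly collapses to the identity; the semi-direct product decomposition of $\Sigma_3\ltimes\calj$ disposes of this cleanly, and after that the proposition is a short formal computation.
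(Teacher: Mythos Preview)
Your proof is correct and follows essentially the same route as the paper: both reduce everything to the conjugation identity $(1\;3)U(1\;3)=V$, $(1\;3)W(1\;3)=W$ from Proposition~\ref{prop:Sigma3_acts_on_J}~\ref{prop:Sigma3_acts_on_J:ii} together with the orders of $UV$ and $UW$. The one minor difference is in ruling out that an odd power of $(1\;3)U$ is the identity: the paper argues that odd powers are mode-reversing on consonant triads while even powers are mode-preserving, whereas you invoke the semi-direct product decomposition $\Sigma_3\ltimes\calj$ to read off the nontrivial permutation component --- both are clean and equally short.
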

\begin{proof} \leavevmode
\begin{enumerate}
\item
The even powers of $(1\;3)U$ are
$$\big((1\;3)U\big)^{2m}=\Big((1\;3)U\cdot(1\;3)U\Big)^{m}=\big(VU\big)^{m}=\big(UV\big)^{-m},$$
which are all distinct from one another for $m=0,1,\dots,11$ by Theorem~\ref{thm:structure_of_J}~\ref{thm:structure_of_J:semi-direct_product_form}. The odd powers of
$(1\;3)U$ are $$\big((1\;3)U\big)^{2m+1}=\big((1\;3)U\big)\big(UV\big)^{-m},$$
by the preceding statement; these are all distinct from one another for $m=0,1,\dots,11$ by left cancellation.
The odd powers are mode reversing while the even powers are mode preserving, so they are different, and $(1\;3)U$ has order 24.
\item
Using Proposition~\ref{prop:Sigma3_acts_on_J}~\ref{prop:Sigma3_acts_on_J:ii}, we have
$(1\;3)W\cdot (1\;3)W=(1\;3)W(1\;3)\cdot W =W \cdot W =\text{Id}$.
\item
Again using Proposition~\ref{prop:Sigma3_acts_on_J}~\ref{prop:Sigma3_acts_on_J:ii}, we have
$(1\;3)W\cdot(1\;3)U=(1\;3)W(1\;3)\cdot U=WU=(UW)^{-1}$, which is known to have order 12 by Theorem~\ref{thm:structure_of_J}~\ref{thm:structure_of_J:UV_and_UW_have_order_12}.
\end{enumerate}
\end{proof}

%As the product $48$ the orders $24$ and $2$ of the two generators $H^{1,2}$ and $H^{1,3}$ is relatively small compared to the order $288$ of the Hook Group $\mathcal{H}$, one has to expect longer concatenations in order to represent all group elements. The element $(H^{1,2} H^{1,3})^6$, for example, has no shorter representation.

\begin{prop}[Another Normal Form for $\calh$] \label{prop:products_of_generators} \leavevmode
\begin{enumerate}
\item \label{prop:products_of_generators:intersection_trivial}
The group generated by the product $(1\;3)W\cdot(1\;3)U$  is equal to $\langle UW \rangle$, and consequently its intersection with $\big\langle (1\;3)U \big\rangle$ is trivial. \\
$$\big\langle (1\;3)W\cdot(1\;3)U \big \rangle \bigcap \big\langle (1\;3)U \big\rangle  = \{\text{\rm Id}\}$$
\item \label{prop:products_of_generators:uniqueness}
Every element of $\mathcal{H}$ can be written uniquely in the form $$\big((1\;3)U\big)^p (UW)^n$$ where $p=0,1,\dots,23$ and $n=0,1,\dots,11$.
\item
Such a product belongs to $\mathcal{H}^+$ (respectively $\mathcal{H}^-$) if and only if the exponent $p$ is even (respectively odd).
\end{enumerate}
\end{prop}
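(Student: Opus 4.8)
The plan is to build directly on Proposition~\ref{prop:orders_of_generators}, which already records the orders and the explicit powers of the two distinguished elements, together with the elementary product formula $|AB|=|A|\,|B|/|A\cap B|$ for subgroups of a finite group. For part~\ref{prop:products_of_generators:intersection_trivial}, I would first cite Proposition~\ref{prop:orders_of_generators}~\ref{prop:orders_of_generators:product}, which gives $(1\;3)W\cdot(1\;3)U=(UW)^{-1}$; hence $\langle(1\;3)W\cdot(1\;3)U\rangle=\langle UW\rangle$, cyclic of order $12$. To show $\langle UW\rangle\cap\langle(1\;3)U\rangle=\{\text{\rm Id}\}$, I would use Proposition~\ref{prop:orders_of_generators}~\ref{prop:orders_of_generators:powers_of_(13)U}: the even powers of $(1\;3)U$ are the elements $(UV)^{-m}\in\langle UV\rangle$, while the odd powers are mode-reversing. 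Since $\langle UW\rangle\subseteq\calj^+$ consists entirely of mode-preserving transformations, any element of the intersection must be an even power of $(1\;3)U$, hence lies in $\langle UV\rangle\cap\langle UW\rangle$, which is trivial by the computation in the proof of Theorem~\ref{thm:structure_of_J}~\ref{thm:structure_of_J:semi-direct_product_form}.

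For part~\ref{prop:products_of_generators:uniqueness}, I would argue by counting. Both $(1\;3)U$ and $UW$ lie in $\calh$ (the former by Proposition~\ref{prop:H_two_generators}, the latter since $UW\in\calj^+=\calh^+$ by Proposition~\ref{prop:H_decomposition}~\ref{prop:H_decomposition:i}), so the set $\langle(1\;3)U\rangle\,\langle UW\rangle$ of all products $\big((1\;3)U\big)^p(UW)^n$ is contained in $\calh$. By the product formula and part~\ref{prop:products_of_generators:intersection_trivial}, this set has exactly $(24\cdot 12)/1=288$ elements, and since $|\calh|=288$ by Proposition~\ref{prop:H_decomposition}~\ref{prop:H_decomposition:iii}, it is all of $\calh$. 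Thus every element of $\calh$ has the claimed form with $p\in\{0,\dots,23\}$ and $n\in\{0,\dots,11\}$; uniqueness follows because $\big((1\;3)U\big)^p(UW)^n=\big((1\;3)U\big)^{p'}(UW)^{n'}$ forces $\big((1\;3)U\big)^{p'-p}=(UW)^{n-n'}\in\langle(1\;3)U\rangle\cap\langle UW\rangle=\{\text{\rm Id}\}$, hence $p\equiv p'\pmod{24}$ and $n\equiv n'\pmod{12}$.

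For part~(iii), I would note that $(UW)^n$ is always mode-preserving, since by Theorem~\ref{thm:structure_of_J}~\ref{item:action_of_normal_form} it merely adds a constant to each coordinate, whereas by Proposition~\ref{prop:orders_of_generators}~\ref{prop:orders_of_generators:powers_of_(13)U} the power $\big((1\;3)U\big)^p$ is mode-preserving when $p$ is even and mode-reversing when $p$ is odd. Hence $\big((1\;3)U\big)^p(UW)^n$ lies in $\calh^+$ exactly when $p$ is even and in $\calh^-$ exactly when $p$ is odd, using $\calh=\calh^+\sqcup\calh^-$ from Proposition~\ref{prop:H_decomposition}. I do not anticipate a genuine obstacle: everything is bookkeeping on top of Proposition~\ref{prop:orders_of_generators} and the earlier structure results. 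The one point needing slight care is the triviality of the intersection in part~\ref{prop:products_of_generators:intersection_trivial}, where one must first peel off the mode-reversing odd powers of $(1\;3)U$ before invoking $\langle UV\rangle\cap\langle UW\rangle=\{\text{\rm Id}\}$.
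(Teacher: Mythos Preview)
Your proposal is correct and closely parallels the paper's proof: parts~\ref{prop:products_of_generators:intersection_trivial} and~(iii) are essentially identical to the paper's arguments. The only difference is in part~\ref{prop:products_of_generators:uniqueness}, where you establish existence via the cardinality formula $|AB|=|A|\,|B|/|A\cap B|$ to get $288$ products and compare with $|\calh|=288$, whereas the paper argues existence constructively by invoking the decomposition $\calh=\calj^+\sqcup(1\;3)\calj^-$ from Proposition~\ref{prop:H_decomposition} and observing that every $(UV)^m(UW)^n$ and $(1\;3)U(UV)^m(UW)^n$ can be rewritten using the identity $(UV)^{-m}=\big((1\;3)U\big)^{2m}$ from Proposition~\ref{prop:orders_of_generators}~\ref{prop:orders_of_generators:powers_of_(13)U}. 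Your counting argument is marginally slicker since it sidesteps this rewriting, while the paper's approach has the small advantage of giving an explicit recipe for converting a $\calj$-normal-form element into the new normal form; both are short and rest on the same ingredients.
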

\begin{proof} \leavevmode
\begin{enumerate}
\item
From Proposition~\ref{prop:orders_of_generators}~\ref{prop:orders_of_generators:product}, the product $(1\;3)W\cdot(1\;3)U$ is the mode-preserving function $(UW)^{-1}$, so the product generates a subgroup $\langle UW \rangle$ of mode-preserving functions. From Proposition~\ref{prop:orders_of_generators}~\ref{prop:orders_of_generators:powers_of_(13)U}, the even powers of $(1\;3)U$ are $(UV)^{-m}$, which are not in  $\langle UW \rangle$. The odd powers of $(1\;3)U$ are mode-reversing, so also not in $\langle UW \rangle$.
\item
Uniqueness follows from \ref{prop:products_of_generators:intersection_trivial}.
For the existence, we use the decomposition $\calh=\calj^+ \; \bigsqcup \; (1\;3)\,\calj^-$, and recall $\calj^+= \langle UV,UW\rangle$ and $$(1\;3)\,\calj^- = (1\;3)\,U\calj^+=(1\;3)\,U\,\langle UV,UW\rangle.$$
All of these can be expressed in terms of powers of $(1\;3)\,U$ and $UW$ by Proposition~\ref{prop:orders_of_generators}~\ref{prop:orders_of_generators:powers_of_(13)U}.
\item
This follows from Proposition~\ref{prop:orders_of_generators}~\ref{prop:orders_of_generators:powers_of_(13)U} and that fact that composing a mode-preserving transformation with $U$ gives a mode-reversing transformation.
\end{enumerate}
\end{proof}

Note, that the cyclic groups $\langle (1\;3)U \rangle$ and $\langle UW \rangle$ do not commute. Hence, the Hook Group $\mathcal{H}$ is {\it not} their internal direct product.

The group structure of $\mathcal{H}$ is a semi-direct product of $\calj^+$ and the other generator $(1\;3)W$ as we see now. In Section~\ref{subsec:Hook_Group_and_UTTs}, after a ``change of basis'' in $\calj^+$, we will even see that $\calh$ is the wreath product $\mathbb{Z}_2 \lbag \mathbb{Z}_{12}$.

\begin{thm} \label{thm:H_is_semidirect_product}
The group $\calh$ is the semi-direct product $\langle (1\;3)W \rangle \ltimes \calj^+$. The conjugation action is
\begin{equation} \label{equ:conjugation_action_in_H}
(1\;3)W\cdot\Big( (UV)^m(UW)^n  \Big)\cdot W(1\;3) = (UV)^{m+n}(UW)^{-n}.
\end{equation}
\end{thm}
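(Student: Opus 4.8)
The plan is to verify the semi-direct product structure in two stages: first establish that $\calh = \langle (1\;3)W\rangle \ltimes \calj^+$ as an internal semi-direct product, and then separately compute the conjugation formula \eqref{equ:conjugation_action_in_H}. For the internal semi-direct product, I would check three things. First, $\calj^+$ is normal in $\calh$: since $\calj^+$ is the set of mode-preserving transformations in $\calh$ (Proposition~\ref{prop:H_decomposition}~\ref{prop:H_decomposition:i}) and conjugation preserves the mode-preserving/mode-reversing dichotomy, conjugation by any element of $\calh$ carries $\calj^+$ into itself. (Alternatively, $\calj^+$ has index 2 in $\calh$ by Proposition~\ref{prop:H_decomposition}~\ref{prop:H_decomposition:iii}, hence is automatically normal.) Second, $\langle (1\;3)W\rangle \cap \calj^+ = \{\text{Id}\}$: by Proposition~\ref{prop:orders_of_generators} the element $(1\;3)W$ has order $2$ and is mode-reversing, while all nonidentity elements of $\langle (1\;3)W\rangle$ are just $(1\;3)W$ itself, which is not in the mode-preserving subgroup $\calj^+$. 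Third, $\calh = \langle (1\;3)W\rangle \cdot \calj^+$ as a set: the $144$ elements of $\calj^+$ together with the $144$ elements $(1\;3)W\cdot(UV)^m(UW)^n$ (which are mode-reversing, hence in $\calh^-$) exhaust all $288$ elements of $\calh$ counted in Proposition~\ref{prop:H_decomposition}~\ref{prop:H_decomposition:iii}; a cardinality count finishes this. These three facts give $\calh = \langle (1\;3)W\rangle \ltimes \calj^+$.

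For the conjugation formula, the cleanest route is to reduce everything to $\calj$-relations already proved in Structure Theorem~\ref{thm:structure_of_J}, using Proposition~\ref{prop:Sigma3_acts_on_J}~\ref{prop:Sigma3_acts_on_J:ii} to push the permutation $(1\;3)$ through. Since $(1\;3)W\cdot J\cdot W(1\;3) = (1\;3)\bigl(WJW\bigr)(1\;3)$ for $J = (UV)^m(UW)^n$, and since $WJW = W(UV)^m(UW)^nW$, I would first compute $W\cdot UV\cdot W$ and $W\cdot UW\cdot W$. Note $W\cdot UV\cdot W = (WU)(VW) = (UW)^{-1}(VW)$ using the order-2 relations, and from Theorem~\ref{thm:structure_of_J}~\ref{i}~(vii) we have $(VW)^j$ acting as addition of $j(x-z)$, while $(UV)^m(UW)^n$ acts as addition of $m(z-x)+n(z-y)$. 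Working at the level of these additive actions on $(x,y,z)$ — which by the normal-form uniqueness in Theorem~\ref{thm:structure_of_J}~\ref{thm:structure_of_J:semi-direct_product_form} determines the element of $\calj^+$ completely — is the most economical: one tracks how conjugation by $W$ (equivalently, the linear map $W$) transforms the ``slope vector'' $(m,n)$ attached to an element of $\calj^+$. A direct matrix computation using the explicit form of $W$ from Section~\ref{subsec:Definition_of_J} and the normal-form matrices in Remark~\ref{rem:matrix_representation_of_J_normal_form} is an alternative, and since conjugation by the permutation $(1\;3)$ composed with $W$ is again a fixed linear map, one simply conjugates the normal-form matrix $(UV)^m(UW)^n$ by it and reads off the result as $(UV)^{m+n}(UW)^{-n}$; comparison of a single column (say the action on $e_1$) against Theorem~\ref{thm:structure_of_J}~\ref{item:action_of_normal_form} suffices by uniqueness of the normal form.

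The main obstacle — really the only nontrivial point — is the bookkeeping in the conjugation computation: keeping straight which of $U$, $V$, $W$ the permutation $(1\;3)$ swaps (it fixes $V = J^{2,3}$ and exchanges $U = J^{1,2}$ with $W = J^{3,1}$ up to reindexing, via Proposition~\ref{prop:Sigma3_acts_on_J}), and then correctly expressing the conjugate of $(UV)^m(UW)^n$ back in the standard normal-form basis $\{UV, UW\}$ rather than, say, the basis $\{UV, VW\}$ or $\{UW, VW\}$. Once the additive actions from Theorem~\ref{thm:structure_of_J}~\ref{item:action_of_normal_form} and \ref{i}~(vii) are on the table, this is a short linear-algebra identity over $\mathbb{Z}_{12}$; the risk is purely sign and index errors. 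A sanity check: setting $n=0$ should give $(1\;3)W\cdot(UV)^m\cdot W(1\;3) = (UV)^m$, and indeed conjugation should fix the ``first'' $\mathbb{Z}_{12}$-direction when the second coordinate is trivial — which is consistent with the claimed formula $(UV)^{m+n}(UW)^{-n}$ at $n=0$, and one should verify it also against the $\calj$-relation that $U$-conjugation inverts both factors, since $(1\;3)W$ differs from $W$ by the permutation that the normal form already accounts for.
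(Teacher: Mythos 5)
Your handling of the semi-direct product structure is essentially the paper's own argument: $\calj^+$ is normal in $\calh$ (index 2, or mode preservation), $(1\;3)W$ is an order-2 mode-reversing element outside $\calj^+$, and the coset/cardinality count from Proposition~\ref{prop:H_decomposition} gives $\calh=\langle(1\;3)W\rangle\,\calj^+$ with trivial intersection. For the conjugation formula your underlying method is also the paper's: compute the conjugate explicitly and identify it via the additive action in Theorem~\ref{thm:structure_of_J}~\ref{item:action_of_normal_form} together with uniqueness of the normal form. The paper does this in one shot, observing that the conjugate adds $m(z-x)+n(y-x)$ to $(x,y,z)$ and rewriting this as $(m+n)(z-x)+(-n)(z-y)$, whereas you split the conjugation into a $W$-stage and a $(1\;3)$-stage; both are fine in principle.

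The problem is that the two bookkeeping facts you state --- which you yourself identify as the only nontrivial point --- are both wrong as written, and used literally they would yield an incorrect formula. Conjugation by $(1\;3)$ fixes $W=J^{3,1}$ and exchanges $U=J^{1,2}$ with $V=J^{2,3}$ (by Proposition~\ref{prop:Sigma3_acts_on_J}, $(1\;3)J^{1,2}(1\;3)=J^{3,2}=V$), not ``fixes $V$ and exchanges $U$ with $W$.'' And $(VW)^j$ adds $j(x-y)$, not $j(x-z)$ (indeed $-j(z-x)+j(z-y)=j(x-y)$; the theorem statement has $j(x-y)$). With the corrected facts your two-stage route does close: $W$-conjugation inverts, so $W(UV)^m(UW)^nW=(UV)^{-m}(UW)^{-n}$, and then $(1\;3)$-conjugation sends $UV\mapsto VU=(UV)^{-1}$ and $UW\mapsto VW=(UV)^{-1}(UW)$, giving $(UV)^{m+n}(UW)^{-n}$ as claimed. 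Your fallback of directly conjugating the normal-form matrix by $(1\;3)W$ and comparing against Remark~\ref{rem:matrix_representation_of_J_normal_form} is immune to these slips and would also work; the $n=0$ sanity check is consistent but too weak to catch either error, since both slips affect only the $UW$-direction.
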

\begin{proof}
The subgroup $\calj^+ \leq \calh$ has index 2 and is thus normal. The order 2, mode-reversing transformation $(1\;3)W$ is not in $\calj^+$, so the other coset is $(1\;3)W\calj^+$, and $\langle (1\;3)W \rangle \calj^+=\calh$. Clearly, $\langle (1\;3)W \rangle \cap \calj^+=\{\text{Id}\}$, so finally $\calh = \langle (1\;3)W \rangle \ltimes \calj^+$.

The conjugation formula \eqref{equ:conjugation_action_in_H} follows from Theorem~\ref{thm:structure_of_J}~\ref{item:action_of_normal_form}, the definition of $W$, and the rewriting
$$m\big(z-x\big)+n\big(y-x\big)=(m+n)\big(z-x\big)+(-n)\big(z-y\big).$$
\end{proof}

\subsection{The Hook Group is our Linear Representation of the UTT Group} \label{subsec:Hook_Group_and_UTTs} \leavevmode \smallskip

Recall the representation $\rho\co \calu \to GL(3,\mathbb{Z}_{12})$ from Section~\ref{subsec:defn_of_rho} in equations \eqref{equ:rho_def_part_1} and \eqref{equ:rho_def_part_2}. This representation is an embedding, and acts on root position triads as it should, see equations \eqref{equ:rho_does_as_should_on_+} and \eqref{equ:rho_does_as_should_on_-}. Since $\rho(\calu)$ maps root position triads to root position triads, $\rho(\calu)$ is contained in $\calh$, the subgroup of $\Sigma_3 \ltimes \calj$ that preserves the set of root position triads. The cardinalities are $\vert\rho(\calu)\vert=288=\vert\calh\vert$, so the representation image $\rho(\calu)$ is equal to the Hook group $\calh$.
We would next like to ``change the basis'' of $\calj^+$ in $\calh$ to express $\calh$ as a wreath product $\mathbb{Z}_2 \lbag \mathbb{Z}_{12}$ and to express $\rho$ uniformly.

\begin{thm}[New Basis for $\calj^+$ to Make $\calh$ into a Wreath Product] \label{thm:towards_wreath_product} \leavevmode
\begin{enumerate}
\item \label{thm:towards_wreath_product:rho_+_1_0}
The transformation $(UV)^4(UW)^{-1}$ translates root position major triads by 1 and fixes root position minor triads, so $\rho\langle +,1,0 \rangle=(UV)^4(UW)^{-1}.$
\item \label{thm:towards_wreath_product:rho_+_0_1}
The transformation $(UV)^3(UW)^{1}$ fixes root position major triads and translates root position minor triads by 1, so $\rho\langle +,0,1 \rangle=(UV)^3(UW)^{1}.$
\item \label{thm:towards_wreath_product:generate_J+}
The two transformations $(UV)^4(UW)^{-1}$ and $(UV)^3(UW)^{1}$ generate $\calj^+$.
\item \label{thm:towards_wreath_product:rho_-_0_0}
The transformation $(1\;3)W$ switches parity of root position triads, without translation, so $\rho\langle -,0,0 \rangle=(1\;3)W$.
\item \label{thm:towards_wreath_product:(13)W_exchanges_new_generators}
Conjugation by $(1\;3)W$ exchanges these two new generators of $\calj^+$ in \ref{thm:towards_wreath_product:rho_+_1_0} and \ref{thm:towards_wreath_product:rho_+_0_1}.
\item
The group $\calh$ is isomorphic to the wreath product $\mathbb{Z}_2 \lbag \mathbb{Z}_{12}$, that is, the semi-direct product $\Sigma_2 \ltimes \big(\mathbb{Z}_{12} \times \mathbb{Z}_{12} \big)$ where $\Sigma_2$ acts by exchanging the two copies of $\mathbb{Z}_{12}$.
\end{enumerate}
\end{thm}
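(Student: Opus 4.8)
The plan is to deduce the wreath-product description directly from Theorem~\ref{thm:H_is_semidirect_product} together with the basis statements in parts \ref{thm:towards_wreath_product:rho_+_1_0}--\ref{thm:towards_wreath_product:(13)W_exchanges_new_generators} just established. By Theorem~\ref{thm:H_is_semidirect_product} we already have the internal semi-direct decomposition $\calh=\langle (1\;3)W\rangle \ltimes \calj^+$, and $(1\;3)W$ has order $2$, so $\langle (1\;3)W\rangle\cong\Sigma_2$. It therefore remains only to exhibit an isomorphism $\calj^+\cong\mathbb{Z}_{12}\times\mathbb{Z}_{12}$ under which the conjugation action of $(1\;3)W$ becomes the coordinate exchange; the wreath product $\Sigma_2\lbag\mathbb{Z}_{12}$ is by definition the resulting semi-direct product.

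First I would record that $\{a,b\}$, with $a:=(UV)^4(UW)^{-1}$ and $b:=(UV)^3(UW)^{1}$, is a $\mathbb{Z}_{12}$-basis of $\calj^+$. Writing $\calj^+=\langle UV\rangle\times\langle UW\rangle$ and identifying $(UV)^m(UW)^n$ with the exponent vector $(m,n)\in\mathbb{Z}_{12}^{\times 2}$, the elements $a$ and $b$ correspond to $(4,-1)$ and $(3,1)$; the matrix $\left(\begin{smallmatrix} 4 & -1 \\ 3 & 1\end{smallmatrix}\right)$ having these as rows has determinant $7$, which is a unit in $\mathbb{Z}_{12}$, so it lies in $GL(2,\mathbb{Z}_{12})$. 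Hence $(i,j)\mapsto a^i b^j$ is an isomorphism $\mathbb{Z}_{12}\times\mathbb{Z}_{12}\xrightarrow{\ \cong\ }\calj^+$; in particular $a$ and $b$ each have order $12$, $\langle a\rangle\cap\langle b\rangle=\{\text{Id}\}$, and $\langle a,b\rangle=\calj^+$. (Part~\ref{thm:towards_wreath_product:generate_J+} records the generation statement; the determinant argument above supplies the stronger basis property used here.)

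Next I would invoke part~\ref{thm:towards_wreath_product:(13)W_exchanges_new_generators}: conjugation by $(1\;3)W$ carries $a\mapsto b$ and $b\mapsto a$, hence interchanges the two cyclic factors $\langle a\rangle$ and $\langle b\rangle$ of $\calj^+$. As a consistency check one may reread the conjugation formula \eqref{equ:conjugation_action_in_H}: on exponent vectors it is the involution $(m,n)\mapsto(m+n,-n)$, which sends $(4,-1)\leftrightarrow(3,1)$, matching $a\leftrightarrow b$. Transporting everything across the isomorphism of the previous paragraph, $\calh=\langle (1\;3)W\rangle\ltimes\calj^+$ becomes $\Sigma_2\ltimes(\mathbb{Z}_{12}\times\mathbb{Z}_{12})$ with $\Sigma_2$ acting by swapping the two factors, i.e.\ $\calh\cong\mathbb{Z}_2\lbag\mathbb{Z}_{12}$, as claimed.

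The only computational content is the base-change determinant ($=7$, a unit) and the bare-swap verification from \eqref{equ:conjugation_action_in_H}; both are immediate given Theorem~\ref{thm:structure_of_J}\ref{thm:structure_of_J:semi-direct_product_form} and the preceding parts of the present theorem, so I do not anticipate a real obstacle for this final statement — the substantive work has been front-loaded into parts \ref{thm:towards_wreath_product:rho_+_1_0}--\ref{thm:towards_wreath_product:(13)W_exchanges_new_generators}, and in particular into the fortunate fact that the new generators $a,b$ were chosen so that conjugation by $(1\;3)W$ is exactly the transposition with no residual twist.
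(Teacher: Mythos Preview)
Your argument is correct and follows the same outline as the paper's proof of part~(vi), which simply cites Theorem~\ref{thm:H_is_semidirect_product} together with parts~\ref{thm:towards_wreath_product:generate_J+} and~\ref{thm:towards_wreath_product:(13)W_exchanges_new_generators}. Your determinant computation (the exponent matrix $\left(\begin{smallmatrix} 4 & -1 \\ 3 & 1\end{smallmatrix}\right)$ having unit determinant $7$ in $\mathbb{Z}_{12}$) is a genuine addition: part~\ref{thm:towards_wreath_product:generate_J+} as stated only asserts that $a$ and $b$ \emph{generate} $\calj^+$, whereas the wreath-product conclusion requires the stronger fact that $(i,j)\mapsto a^ib^j$ is a \emph{bijection} $\mathbb{Z}_{12}\times\mathbb{Z}_{12}\to\calj^+$, so that the swap of $a$ and $b$ really becomes the coordinate swap on $\mathbb{Z}_{12}\times\mathbb{Z}_{12}$. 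The paper leaves this implicit (it can be recovered from generation plus $|\calj^+|=144$ plus the fact that the swap forces $a$ and $b$ to have equal order), but your base-change argument makes it immediate and is the cleaner route.
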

\begin{proof} \leavevmode
\begin{enumerate}
\item
To find $(UV)^m(UW)^{n}$ which translates root position major triads by 1 and fixes root position minor triads, we must solve in $\mathbb{Z}_{12}$ the system of equations
$$m7+n3=1 \hspace{1in} m7+n4=0,$$
obtained from evaluating Theorem~\ref{thm:structure_of_J}~\ref{item:action_of_normal_form} on root position major triads and root position minor triads $(x,y,z)$. The solutions are found to be $m=4$ and $n=-1$.
\item
Similarly, we solve
$$m7+n3=0 \hspace{1in} m7+n4=1$$
and find $m=3$ and $n=1$.
\item
The transformations $UV$ and $UW$ commute, so the product $$(UV)^4(UW)^{-1}\cdot (UV)^3(UW)^{1}$$ is $(UV)^7,$ which generates $\langle UV \rangle$, so that $UW=\big((UV)^7\big)^3\cdot(UV)^3(UW)^{1}$ can also be expressed in terms of $(UV)^4(UW)^{-1}$ and $(UV)^3(UW)^{1}$.
\item
Straightforward computation.
\item
Straightforward computation using the conjugation formula \eqref{equ:conjugation_action_in_H}.
\item
This is a consequence of Theorem~\ref{thm:H_is_semidirect_product} and parts \ref{thm:towards_wreath_product:generate_J+} and \ref{thm:towards_wreath_product:(13)W_exchanges_new_generators}.
\end{enumerate}
\end{proof}

\begin{cor}[Uniform Formula for $\rho$ and Semi-Direct Product Structure] \label{cor:New_Basis_Uniform_Formula}
Let us denote the new generators of $\calh$ by
$$E=(1\;3)W \hspace{.5in} F=(UV)^4(UW)^{-1} \hspace{.5in} G=(UV)^3(UW)^{1}.$$
Let $\text{\rm index}\co \{+,-\} \to \mathbb{Z}_2$ be the isomorphism from the multiplicative cyclic group of order 2 to the additive cyclic group of order 2, that is $\text{\rm index}(+) = 0, \;\text{\rm index}(-) = 1$.
The injective representation $\rho\co \calu \to GL(3,\mathbb{Z}_{12})$ from Section~\ref{subsec:defn_of_rho} in equations \eqref{equ:rho_def_part_1} and \eqref{equ:rho_def_part_2} is an isomorphism onto its image $\calh$ and satisfies
\begin{equation} \label{equ:formula_for_rho_in_new_generators}
\rho\langle s,m,n \rangle=E^{\text{\rm index}(s)}F^m G^n.
\end{equation}
Moreover, the group operation in $\calh$ in terms of these new generators is
\begin{equation} \label{equ:formula_for_product_in_new_generators}
E^t F^p G^q \cdot E^s F^m G^n = \begin{cases} E^t F^{m+p} G^{n+q} & \text{\rm if } s=0\phantom{0} \\ E^{t+1} F^{m+q} G^{n+p} & \text{\rm if } s=1.  \end{cases}
\end{equation}
\end{cor}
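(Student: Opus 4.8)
The strategy is to reduce everything to facts already in hand: the injectivity of $\rho$ and the image identity $\rho(\calu)=\calh$ (established at the start of Section~\ref{subsec:Hook_Group_and_UTTs}), the decomposition $\calh=\langle E\rangle\ltimes\calj^+$ with $\calj^+=\langle F,G\rangle$ (Theorem~\ref{thm:H_is_semidirect_product} together with Theorem~\ref{thm:towards_wreath_product}~\ref{thm:towards_wreath_product:generate_J+}), and the four evaluation facts $\rho\langle +,1,0\rangle=F$, $\rho\langle+,0,1\rangle=G$, $\rho\langle-,0,0\rangle=E$ from Theorem~\ref{thm:towards_wreath_product}~\ref{thm:towards_wreath_product:rho_+_1_0}, \ref{thm:towards_wreath_product:rho_+_0_1}, \ref{thm:towards_wreath_product:rho_-_0_0}. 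First I would record that $\rho$ is an isomorphism onto its image: $\rho$ is injective by Section~\ref{subsec:defn_of_rho}, it is a homomorphism, and its image is $\calh$ by the cardinality argument $|\rho(\calu)|=288=|\calh|$ already given; so $\rho\co\calu\to\calh$ is an isomorphism. Nothing new is needed here.

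Next I would prove the formula \eqref{equ:formula_for_rho_in_new_generators}. Since $\rho$ is a homomorphism and every element of $\calu$ factors as $\langle s,m,n\rangle=\langle s,0,0\rangle\circ\langle+,1,0\rangle^{m}\circ\langle+,0,1\rangle^{n}$ — which follows from the composition formulas for $\calu$ recalled in Section~\ref{subsec:UTT_review} (the two copies of $\mathbb{Z}_{12}$ generated by $\langle+,1,0\rangle$ and $\langle+,0,1\rangle$ commute, and $\langle s,0,0\rangle$ multiplies on the left) — applying $\rho$ gives
$$\rho\langle s,m,n\rangle=\rho\langle s,0,0\rangle\cdot\rho\langle+,1,0\rangle^{m}\cdot\rho\langle+,0,1\rangle^{n}=E^{\text{index}(s)}F^{m}G^{n},$$
using $\rho\langle+,0,0\rangle=\text{Id}=E^{0}$ and $\rho\langle-,0,0\rangle=E$, i.e.\ $\rho\langle s,0,0\rangle=E^{\text{index}(s)}$. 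This also re-proves that $E,F,G$ generate $\calh$, consistent with Proposition~\ref{prop:H_two_generators} and Theorem~\ref{thm:towards_wreath_product}~\ref{thm:towards_wreath_product:generate_J+}.

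Finally I would derive the product rule \eqref{equ:formula_for_product_in_new_generators}. The cleanest route is to transport the known composition formulas of $\calu$ (Section~\ref{subsec:UTT_review}) through the isomorphism $\rho$: writing $E^{t}F^{p}G^{q}=\rho\langle s',p,q\rangle$ with $s'$ the sign of index $t$, and $E^{s}F^{m}G^{n}=\rho\langle s'',m,n\rangle$, one computes $\rho(\langle s',p,q\rangle\circ\langle s'',m,n\rangle)$ by the four $\calu$-composition formulas and then re-expresses the result via \eqref{equ:formula_for_rho_in_new_generators}; the case $s=0$ (i.e.\ $s''=+$) gives $E^{t}F^{m+p}G^{n+q}$ and the case $s=1$ (i.e.\ $s''=-$) gives $E^{t+1}F^{m+q}G^{n+p}$, the exponents of $E$ being added mod $2$. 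Alternatively, and entirely internally, one uses that $F$ and $G$ commute (they lie in the abelian $\calj^+$) and that conjugation by $E$ swaps $F$ and $G$ (Theorem~\ref{thm:towards_wreath_product}~\ref{thm:towards_wreath_product:(13)W_exchanges_new_generators}), i.e.\ $E F E^{-1}=G$, $EGE^{-1}=F$, together with $E^{2}=\text{Id}$; then $E^{s}F^{m}G^{n}$ moved past $E^{t}F^{p}G^{q}$ picks up the swap exactly when $s$ is odd. I expect no genuine obstacle: the only mild subtlety is bookkeeping the $\mathbb{Z}_2$-index arithmetic and making sure the ``swap on odd $s$'' matches the stated cases, but that is routine given Theorem~\ref{thm:towards_wreath_product}. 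The bulk of the work has already been done in Theorem~\ref{thm:towards_wreath_product}; this corollary is essentially a repackaging.
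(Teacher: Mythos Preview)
Your proposal is correct and follows essentially the same approach as the paper: you derive \eqref{equ:formula_for_rho_in_new_generators} from the generator values in Theorem~\ref{thm:towards_wreath_product}~\ref{thm:towards_wreath_product:rho_+_1_0}, \ref{thm:towards_wreath_product:rho_+_0_1}, \ref{thm:towards_wreath_product:rho_-_0_0} together with the fact that $\rho$ is a homomorphism, and you obtain \eqref{equ:formula_for_product_in_new_generators} either by transporting the $\calu$-composition formulas through the isomorphism $\rho$ or, internally, from the commutativity of $F,G$ and the swap under $E$-conjugation from Theorem~\ref{thm:towards_wreath_product}~\ref{thm:towards_wreath_product:(13)W_exchanges_new_generators}. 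The paper's proof invokes exactly these ingredients, only more tersely.
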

\begin{proof}
The formula for the homomorphism $\rho$ in \eqref{equ:formula_for_rho_in_new_generators} follows directly from Theorem~\ref{thm:towards_wreath_product}~\ref{thm:towards_wreath_product:rho_+_1_0}, \ref{thm:towards_wreath_product:rho_+_0_1}, and \ref{thm:towards_wreath_product:rho_-_0_0}.

The equation for the group structure in \eqref{equ:formula_for_product_in_new_generators} follows from the commutativity of $
F$ and $G$ and Theorem~\ref{thm:towards_wreath_product}~\ref{thm:towards_wreath_product:(13)W_exchanges_new_generators}. Equation \eqref{equ:formula_for_product_in_new_generators} also follows from the wreath product structure of $\calu$ in combination with the fact that $\rho$ is an isomorphism onto its image.
\end{proof}

\begin{rmk}[Matrix Representation for New Basis of $\calj^+$]
From the Theorem~\ref{thm:structure_of_J}~\ref{item:action_of_normal_form} we find matrix representations for the new basis of $\calj^+$.
$$\rho\langle +,1,0 \rangle=(UV)^4(UW)^{-1}=\left( \begin{array}{ccc} 9 & 1 & 3 \\ 8 & 2 & 3 \\ 8 & 1 & 4 \end{array} \right)$$
$$\rho\langle +,0,1 \rangle=(UV)^3(UW)^{1}=\left( \begin{array}{ccc} {10} & {11} & 4 \\ 9 & 0 & 4 \\  9 & {11} & 5 \end{array} \right)$$
\end{rmk}

\section{Conclusion} \label{sec:Conclusion}

To finish up, we may now return to our Webern {\it Problembeispiel} in Section~\ref{subsec:MotivationalProblembeispiel}. In Figure~\ref{fig:redo_of_Straus_on_Webern} we saw how to interpret the exact sequences in the Webern piece using the permutation $(12)$ and a voice reflection $V$. This motivated us to determine in Theorem~\ref{thm:structure_of_J} the structure of the group $\calj$ generated by the voice reflections $U$, $V$, and $W$, and to determine in Proposition~\ref{prop:Sigma3_J_is_SemiDirectProduct} the structure of $\langle \Sigma_3, \calj \rangle$. We further understood $\Sigma_3 \ltimes \calj$ in Section~\ref{subsec:Triadic_Orbits} through some special subgroups and their triadic orbits. Our determination of the centralizer of $\calj$ in $GL(3, \mathbb{Z}_{12})$ and $\text{Aff}(3, \mathbb{Z}_{12})$ in Propositions~\ref{prop:Centralizer_of_J_in_GL3} and \ref{prop:Centralizer_of_J_in_Aff(3,Z12)} guaranteed that $x-2$ in Figure~\ref{fig:redo_of_Straus_on_Webern} is a morphism of generalized interval systems. Further examples of morphisms are in Section~\ref{subsec:Affine_Morphisms}, and special cases of known duality theorems also follow from the aforementioned centralizers. Our group $\Sigma_3 \ltimes \calj$ allows us to sometimes turn alternating cycles into orbits of a single transformation, as we did with the Grail motive in Section~\ref{subsec:Grail_Motive}, some triadic networks in Schoenberg's String Quartet in $D$ minor, and the diatonic falling fifth sequence. As a final application of our Structure Theorem~\ref{thm:structure_of_J}, we characterize our representation of Hook's uniform triadic transformation group $\calu$ as the subgroup $\calh$ of $\Sigma_3 \ltimes \calj$ that maps root position consonant triads to root position consonant triads.

\subsection*{Acknowledgements}

Thomas Fiore gratefully acknowledges support from several sources during the genesis of this paper. A Humboldt Research Fellowship for Experienced Researchers supported Thomas Fiore during his 2015-2016 visit at Universit{\"a}t Regensburg. The Max-Planck-Institut f\"ur Mathematik supported his research stay in Bonn in July 2013. Earlier support in Michigan included a Rackham Faculty Research Grant of the University of Michigan and a Small Grant for Faculty Research from the University of Michigan-Dearborn. Thomas Fiore thanks the Regensburg {\it Sonderforschungsbereich 1085: Higher Invariants} and the Regensburg Mathematics Department for a very stimulating working environment.

Thomas Noll thanks the Max-Planck-Institut f\"ur Mathematik for the support of his research visit in Bonn in July 2013.

%\bibliographystyle{hplain}
%\bibliography{musictheory}

\begin{thebibliography}{10}

\bibitem{CallenderQuinnTymoczkoScience}
Clifton Callender, Ian Quinn, and Dmitri Tymoczko.
\newblock Generalized voice-leading spaces.
\newblock {\em Science}, 320(5874):346--348, 2008.

\bibitem{BerryFiore}
Berry Cameron and Thomas~M. Fiore.
\newblock Hexatonic systems and dual groups in mathematical music theory.
\newblock {\em Preprint}, 2016, \url{http://arxiv.org/abs/1602.02577}.

\bibitem{Catanzaro}
Michael Catanzaro.
\newblock Generalized \emph{{T}onnetze}.
\newblock {\em Journal of Mathematics and Music}, 5(2):117--139, 2011.

\bibitem{clampittParsifal}
David Clampitt.
\newblock Alternative interpretations of some measures from {P}arsifal.
\newblock {\em Journal of Music Theory}, 42(2):321--334, 1998.

\bibitem{CloughRudimentary}
John Clough.
\newblock A rudimentary geometric model for contextual transposition and
  inversion.
\newblock {\em Journal of Music Theory}, 42(2):297--306, 1998.

\bibitem{cloughFlipFlop}
John Clough.
\newblock {F}lip-{F}lop {C}ircles and {T}heir {G}roups.
\newblock In Jack Douthett, Martha~M. Hyde, and Charles~J. Smith, editors, {\em
  Music {T}heory and {M}athematics: {C}hords, {C}ollections, and
  {T}ransformations}, volume~50 of {\em Eastman Studies in Music}. University
  of Rochester Press, 2008.

\bibitem{cohn1996}
Richard Cohn.
\newblock Maximally smooth cycles, hexatonic systems, and the analysis of
  late-romantic triadic progressions.
\newblock {\em Music Analysis}, 15(1):9--40, 1996.

\bibitem{cohn1997}
Richard Cohn.
\newblock Neo-{R}iemannian operations, parsimonious trichords, and their
  ``{T}onnetz'' representations.
\newblock {\em Journal of Music Theory}, 41(1):1--66, 1997.

\bibitem{cohn1999}
Richard Cohn.
\newblock As wonderful as star clusters: {I}nstruments for gazing at tonality
  in {S}chubert.
\newblock {\em 19th-{C}entury {M}usic}, 22(3):213--232, 1999.

\bibitem{cransfioresatyendra}
Alissa~S. Crans, Thomas~M. Fiore, and Ramon Satyendra.
\newblock Musical actions of dihedral groups.
\newblock {\em American Mathematical Monthly}, 116(6):479--495, 2009.

\bibitem{DouthettSteinbach}
Jack Douthett and Peter Steinbach.
\newblock Parsimonious graphs: A study in parsimony, contextual
  transformations, and modes of limited transposition.
\newblock {\em Journal of Music Theory}, 42(2):241--263, 1998.

\bibitem{DummitFoote}
David~S. Dummit and Richard~M. Foote.
\newblock {\em Abstract algebra}.
\newblock John Wiley \& Sons, Inc., Hoboken, NJ, third edition, 2004.

\bibitem{FengOrdersOfClassicalGroups}
Hong Feng.
\newblock Orders of classical groups over finite rings.
\newblock {\em Journal of Mathematical Research and Exposition},
  18(4):507--512, 1998.

\bibitem{fioreMusicAndMathematics}
Thomas~M. Fiore.
\newblock Music and mathematics. lecture notes.
\newblock 2003,
  \url{http://www-personal.umd.umich.edu/~tmfiore/1/musictotal.pdf}.

\bibitem{fiorenoll2011}
Thomas~M. Fiore and Thomas Noll.
\newblock Commuting groups and the topos of triads.
\newblock In Carlos Agon, Emmanuel Amiot, Moreno Andreatta, Gerard Assayag,
  Jean Bresson, and John Mandereau, editors, {\em Mathematics and Computation
  in Music, Third International Conference, MCM 2011, Proceedings}, volume 6726
  of {\em Lecture Notes in Artificial Intelligence (Subseries of Lecture Notes
  in Computer Science)}, pages 69--83. Springer, 2011.

\bibitem{fiorenollsatyendraMCM2013}
Thomas~M. Fiore, Thomas Noll, and Ramon Satyendra.
\newblock Incorporating voice permutations into the theory of neo-{R}iemannian
  groups and {L}ewinian duality.
\newblock In Jason Yust, Jon Wild, and John~Ashley Burgoyne, editors, {\em
  Mathematics and Computation in Music, Third International Conference, MCM
  2013, Proceedings}, volume 7937 of {\em Lecture Notes in Artificial
  Intelligence (Subseries of Lecture Notes in Computer Science)}, pages
  100--114. Springer, 2013.

\bibitem{fiorenollsatyendraSchoenberg}
Thomas~M. Fiore, Thomas Noll, and Ramon Satyendra.
\newblock Morphisms of generalized interval systems and {$PR$}-groups.
\newblock {\em Journal of Mathematics and Music}, 7(1):3--27, 2013.

\bibitem{fioresatyendra2005}
Thomas~M. Fiore and Ramon Satyendra.
\newblock Generalized contextual groups.
\newblock {\em Music Theory Online}, 11(3), 2005,
  \url{http://mto.societymusictheory.org}.

\bibitem{gollin}
Edward Gollin.
\newblock {S}ome {A}spects of {T}hree-{D}imensional \emph{{T}onnetze}.
\newblock {\em Journal of Music Theory}, 42(2):195--206, 1998.

\bibitem{Hall_LinearContextualTransformations}
Rachel~Wells Hall.
\newblock Linear contextual transformations.
\newblock In Giuseppe Di~Maio and Somashekhar Naimpally, editors, {\em Theory
  and {A}pplications of {P}roximity, {N}earness, and {U}niformity}, volume~22
  of {\em Quaderni di Matematica}, pages 101--129. Aracne Editrice, Rome, 2009.

\bibitem{HongYou}
You Hong and Gao You.
\newblock Computation of orders of classical groups over finite commutative
  rings.
\newblock {\em Chinese Science Bulletin}, 39(14):1150--1154, 1994.

\bibitem{hookUTT2002}
Julian Hook.
\newblock Uniform triadic transformations.
\newblock {\em Journal of Music Theory}, 46(1-2):57--126, 2002.

\bibitem{lewin1977}
David Lewin.
\newblock A label-free development for 12-pitch-class systems.
\newblock {\em Journal of Music Theory}, 21(1):29--48, 1977.

\bibitem{LewinGMIT}
David Lewin.
\newblock {\em Generalized Musical Intervals and Transformations}.
\newblock Yale University Press, New Haven, 1987.

\bibitem{NovotnyHrivnak}
Petr Novotn\'{y} and Ji\v{r}\'{i} Hrivn\'{a}k.
\newblock On orbits of the ring $\mathbb{Z}_n^m$ under the action of the group
  {$SL(m,\mathbb{Z}_n)$}.
\newblock {\em Acta Polytechnica}, 45(5):41--45, 2005,
  \url{http://arxiv.org/abs/0710.0326}.

\bibitem{peckGeneralizedCommuting}
Robert Peck.
\newblock Generalized commuting groups.
\newblock {\em Journal of Music Theory}, 54(2):143--177, 2010.

\bibitem{Schillinger1946}
Joseph Schillinger.
\newblock {\em The Schillinger System of Musical Composition}.
\newblock C. Fischer, New York, 1946.

\bibitem{StrausContextualInversions}
Joseph~N. Straus.
\newblock Contextual-inversion spaces.
\newblock {\em Journal of Music Theory}, 55(1):43--88, 2011.

\bibitem{TymoczkoGeomMusChrdsScience}
Dmitri Tymoczko.
\newblock The geometry of musical chords.
\newblock {\em Science}, 313(5783):72--74, 2006.

\bibitem{Waller}
Derek~A. Waller.
\newblock Some combinatorial aspects of the musical chords.
\newblock {\em The Mathematical Gazette}, 62(419):12--15, 1978.

\end{thebibliography}

\end{document}